\definecolor{qqffff}{rgb}{0,1,1}
\definecolor{ttffcc}{rgb}{0.2,1,0.8}
\definecolor{ududff}{rgb}{0.30196078431372547,0.30196078431372547,1}
\definecolor{qqqqff}{rgb}{0,0,1}
\definecolor{ffffff}{rgb}{1,1,1}
\definecolor{ffqqqq}{rgb}{1,0,0}
\definecolor{qqffqq}{rgb}{0,1,0}
\definecolor{cqcqcq}{rgb}{0.7529411764705882,0.7529411764705882,0.7529411764705882}
\definecolor{xfqqff}{rgb}{0.4980392156862745,0,1}
\definecolor{ccwwff}{rgb}{0.8,0.4,1}
\definecolor{zzttqq}{rgb}{0.6,0.2,0}
\definecolor{cczzff}{rgb}{0.8,0.4,1}
\theoremstyle{definition}
\newtheorem*{assumptions}{Assumptions}
\theoremstyle{plain}
\newtheorem{theorem}{Theorem}[subsection]
\newtheorem{lemma}[theorem]{Lemma}
\newtheorem{corollary}[theorem]{Corollary}
\newtheorem{proposition}[theorem]{Proposition}
\theoremstyle{definition}
\newtheorem{example}[theorem]{Example}
\newtheorem{fact}[theorem]{Fact}
\newtheorem{notation}[theorem]{Notation}
\newtheorem{definition}[theorem]{Definition}
\newtheorem{remarkn}[theorem]{Remark}
\newcommand{\hei}{\mbox{height}}
\newcommand{\dist}{\text{dist}}
\newcommand\tcr[1]{\textcolor{red}{#1}}
\newcommand\tcb[1]{\textcolor{blue}{#1}}
\newcommand{\tp}{\mbox{type}}
\newcommand{\depth}{\mbox{depth}}
\newcommand{\hh}{\mbox{height}}
\newcommand{\Ext}{\mbox{Ext}}
\newcommand{\R}{\mathcal{R}}
\newcommand{\B}{\mathcal{B}}
\newcommand{\kk}{\Bbbk}
\newcommand{\mrad}{\mbox{m-rad}}
\newcommand\gen[1]{\left\langle #1 \right\rangle}
\newcommand\Set[2]{\left\lbrace #1 \mid #2 \right\rbrace}
\newcommand\set[1]{\left\lbrace #1 \right\rbrace}
\newcommand\tco[1]{\textcolor{orange}{#1}}
\newcommand{\NN}{\mathcal{N}}
\newcommand{\II}{\mathcal{I}}
\newcommand{\calS}{\mathcal{S}}
\newcommand{\calF}{\mathcal{F}}
\begin{document}

\title[Open Neighborhood Ideals of Well Totally Dominated Trees]{Open Neighborhood Ideals of Well-Totally Dominated Trees are Cohen-Macaulay}

\author[Lim, et al]{Jounglag Lim}
\address{School of mathematical and statistical sciences, Clemson University, Clemson, SC 29634}
\email{joungll@clemson.edu}

\author[]{James Gossell}
\address{Department of mathematics and statistics, University of Alaska, Fairbanks, AK 99775}
\email{jegossell@alaska.edu}

\author[]{Keri Ann Sather-Wagstaff}
\address{Framingham State University, Framingham, MA 01701}
\email{ksatherwagstaff@framingham.edu}

\begin{abstract}
We introduce and investigate the open neighborhood ideal $\NN(G)$ of a finite simple graph $G$. We describe the minimal primary decomposition of $\NN(G)$ in terms of the minimal total dominating sets (TDSs) of $G$. Then we prove that the open neighborhood ideal of a tree is Cohen-Macaulay if and only if the tree is well-totally dominated (WTD) and calculate the Cohen-Macaulay type.
\end{abstract}

\maketitle
\setcounter{tocdepth}{1}
\tableofcontents

\section{Introduction}

Commutative algebra and combinatorics have a rich history of intersections.
In this paper, we focus on connections between commutative algebra and graph theory.
This approach begins with work of Villarreal~\cite{MR1031197} who introduced and investigated the \emph{edge ideal} $\II(G)$ of a finite, simple graph $G$, which is the ideal generated by the edges of $G$ in an appropriate polynomial ring. He described the minimal primary decomposition of $\II(G)$ in terms of the minimal vertex covers of $G$ and proved that the edge ideal of a tree is Cohen-Macaulay if and only if the tree is unmixed (well covered). He also gave explicit descriptive and constructive characterizations of all unmixed trees.
Much subsequent research has expanded our understanding of the relations between the algebraic properties of $\II(G)$ and the graph-theoretic properties of $G$; see, e.g., \cite{SCM_graphs,wei2019cohenmacaulay,MR2302553,MR2231097,MR1425502}.

Variations on this algebraic construction relate to different properties of the graph.
For instance, Conca and De Negri \cite{MR1666661} introduced the $r$-path ideal $\II_r(G)$ which is generated by the paths of length $r$ in $G$ and is connected to $r$-path covers;
the edge ideal is the special case $r = 1$, i.e., $\II_{1}(G) = \II(G)$.
Campos, et al.~\cite{MR3175038} characterized the Cohen-Macaulay $r$-path ideals of trees.
Sharifan and Moradi \cite{MR4132629} introduced the closed neighborhood ideal of $G$ which is generated by the closed neighborhoods in $G$ and is connected to domination sets.
Honeycutt and Sather-Wagstaff~\cite{MR4445927} characterized the trees whose closed neighborhood ideals are Cohen-Macaulay as those that are unmixed (well-dominated tree).

This paper investigates a similar construction based on open neighborhoods: the \emph{open neighborhood ideal} $\NN(G)$ of $G$ is the ideal generated by the open neighborhoods of $G$. 
We define this formally in Definition~\ref{def. open neig. ideal} below. 
Section~\ref{subsec241204a} contains some foundational information. 
It includes Theorem~\ref{decomp of ONI} which describes the minimal primary decomposition of $\NN(G)$ in terms of the minimal total dominating sets (TDSs) of $G$.

Section~\ref{sec241204b} consists of graph-theoretic results, mainly, the descriptive characterization of well-totally dominated (WTD) trees; see Theorem~\ref{thm. char. of WTD trees}.
The proofs of the results in this section can be found in \cite{COURAGE_GT}.
Then in Section~\ref{sec241204c}, we show that the open neighborhood ideals of WTD trees are Cohen-Macaulay; see Corollary~\ref{cor. CM of ONI and SON} which follows from the fact that the corresponding Stanley-Reisner simplicial complex is shellable, proved in Theorem~\ref{thm. stable complex is shellable}. 
The current paper ends with Section~\ref{sec241204d} where we use our results to compute the Cohen-Macaulay type of open neighborhood ideal of WTD trees in Theorem~\ref{thm. CM type of WTD trees}.

\begin{assumptions}
Throughout, we assume that $G$ is a finite, simple graph, $\kk$ is a field, and $A$ is a nonzero commutative ring with unity.
The vertex set and the edge set of $G$ are $V(G)$ and $E(G)$, respectively.
If the graph we consider is clear in context, then we write $V := V(G)$ and $E := E(G)$.
Since $G$ is simple, we denote each edge $e \in E$ as $uv=vu$ where $u,v \in V$ are the distinct vertices incident to $e$.
Let $A[V(G)] = A[V]$ be the polynomial ring over $A$ whose variables are the vertices of $G$.
We set $R := \kk[V]$ or $R := A[V]$ depending on the~context.
\end{assumptions}

\noindent\textbf{Acknowledgement.} We are gratful to Clemson University's School of Mathematical and Statistical Sciences for financial support for the 2020 summer REU COURAGE (Clemson Online Research on Algebra and Graphs Expanded) where this research started.

\section{Total Dominating Sets and Open Neighborhood Ideals}
\label{subsec241204a}

This section contains some graph-theoretical, foundational information and Theorem~\ref{decomp of ONI} which describes the minimal primary decomposition of $\NN(G)$ in terms of the minimal total dominating sets (TDSs) of $G$.

\subsection{Foundations}\label{subsec. foundations}

\begin{definition}\label{def. open neighborhood}
    For $v \in V(G)$, the \emph{open neighborhood of $v$ in $G$} is the set of vertices 
$$
N_G(v) := \Set{u \in V}{uv \in E}.
$$
For any subset $S \subseteq V(G)$, we define 
$$
N_G(S) := \bigcup_{v \in S} N_G(v).
$$
If the graph $G$ is clear in the context, we set $N(v):=N_G(v)$ and $N(S):=N_G(S)$.
    For $U \subseteq V$, define the \emph{monomial of $U$} in $A[V]$ by  
    $X_U := \prod_{v \in U} v$.
\end{definition}

%\begin{example}\label{ex. P_6 with a leaf}
%    Let $T$ be the following graph with vertex set $V = \set{v_1,v_2,\dots,v_8}$:
%    \begin{figure}[ht]
%        \centering
%        \includegraphics{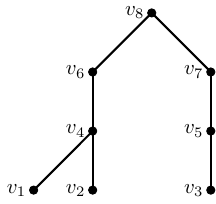}
%        \label{fig:enter-label}
%    \end{figure}
%    
%    \noindent Then $R = A[V] = A[v_1,\dots,v_8]$, and we have
%    \begin{align*}
%     N_T(v_4) = \set{v_1,v_2,v_6} & & N_T(v_8) = \set{v_6,v_7} & &  N_T(v_4,v_5,v_8) = \set{v_1,v_2,v_3,v_6,v_7}. 
%    \end{align*}
%    Setting $U = N_T(v_4)$, we get $X_U = v_1v_2v_6$ and $P_U = \gen{v_1,v_2,v_6}$ in $R$.
%\end{example}

\begin{definition}\label{def. S-TDS, SONI}
Fix $S \subseteq V$.
A set $D \subseteq V$ is an \emph{$S$-totally dominating set} (\emph{$S$-TDS}) of $G$ if $N(D) \supseteq S$. An $S$-TDS $D$ of $G$
is \emph{minimal} if there is no proper subset $D'$ of $D$ such that $N_G(D') \supseteq S$, i.e., if $D$ does not properly contain another $S$-TDS of $G$.
We say that $G$ is $S$-\emph{well-totally dominated} ($S$-WTD) if every minimal $S$-TDS of $G$ has the same size.
The $S$-open neighborhood ideal of $G$ is the following ideal in $R = A[V]$
    $$
    \NN_S(G) := \gen{X_{N(v)}\mid v \in S}.
    $$
\end{definition}

\begin{example}\label{ex. P5 STDset}
Consider the path $P_5$ with 5 edges and set $R = A[V]$ and $S = \set{v_2,v_4,v_6}$.
\begin{figure}[ht]
\centering
\includegraphics{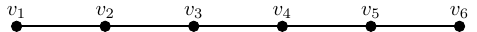}
\label{P5}
\end{figure}
Then the $S$-open neighborhood ideal of $P_5$ is 
    \begin{align*}
        \NN_S(P_5) = \gen{X_{N(v_2)}, X_{N(v_4)}, X_{N(v_6)}}
        = \gen{v_1v_3,v_3v_5,v_5}
        = \gen{v_1v_3,v_5}
    \end{align*}
which decomposes irredundantly as
    \begin{align*}
        \NN_S(P_5) &= \gen{v_1v_3,v_5}
        = \gen{v_1,v_5} \cap \gen{v_3,v_5}.
    \end{align*}
    It is straightforward to see that the sets $\set{v_1,v_5}$ and $\set{v_3,v_5}$ corresponding to the ideals in this decomposition are exactly the minimal $S$-TDSs of $P_5$.
    In particular, $P_5$ is $S$-WTD.
%See Theorem~\ref{thm. decomp of SON} for our general decomposition result for $\NN_S(G)$.
\end{example}

The following fact is used in the proof of Theorem~\ref{thm. decomp of SON} which gives a general decomposition of $\NN_S(G)$.

\begin{fact}\label{subset relation of S-TDS}
   Let $S \subseteq V$. The following statements are straightforward to verify.
    \begin{itemize}
        \item[(1)] For any $V' \subseteq V'' \subseteq V$, if $V'$ is an $S$-TDS of $G$, then $V''$ is an $S$-TDS of $G$.
        \item[(2)] Every $S$-TDS of $G$ contains a minimal $S$-TDS of $G$. 
    \end{itemize}
\end{fact}

\subsection{Decompositions}

\begin{lemma}\label{lem. S-TDS iff P_V' contains SON}
    Let $S,V' \subseteq V$ and $R = A[V]$.
    Then $V'$ is an $S$-TDS of $G$ if and only if $\NN_S(G) \subseteq \gen{V'}$.
\end{lemma}

\begin{proof}
    For the forward implication, assume that $V'$ is an $S$-TDS.
    Consider a generator $X_{N(s)} \in \NN_S(G)$ with $s \in S$.
    Since $V'$ is an $S$-TDS, there exists some vertex $v \in V'$ such that $v \in N(s)$.
    Hence we get $v | X_{N(s)}$, so $X_{N(s)} \in (v)R \subseteq \gen{V'}$.
    Thus $\NN_S(G) \subseteq \gen{V'}$.

    For the converse implication, assume that $\NN_S(G) \subseteq \gen{V'}$.
    Let $u \in S$.
    Then $X_{N(u)} \in \NN_S(G) \subseteq \gen{V'}$.
    Hence there exists some $v \in V'$ such that $v | X_{N(u)}$.
    By definition of $X_{N(u)}$, we must have $v \in N(u)$.
    Thus $u \in N(v) \subseteq N(V')$.
    Since $u$ is an arbitrary vertex in $S$, we get $S \subseteq N(V')$, so $V'$ is an $S$-TDS.
\end{proof}

The following theorem describes the decomposition of $S$-open neighborhood ideals using $S$-totally dominating sets. 

\begin{theorem}\label{thm. decomp of SON}
    Let $S \subseteq V$, and $R = A[V]$.
    The $S$-open neighborhood ideal has the following decomposition
    $$
    \NN_S(G) = \bigcap_{D} \gen{D} = \bigcap_{D\ min} \gen{D}
    $$
    where the first intersection is taken over all $S$-TDSs of $G$, and the second intersection is taken over all minimal $S$-TDSs of G.
    Moreover, the second decomposition is irredundant.
\end{theorem}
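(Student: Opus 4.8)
The plan is to prove the two equalities and the irredundancy separately, leaning on Lemma~\ref{lem. S-TD-set iff P_V' contains SON} as the main bridge between the algebra and the combinatorics. For the first equality $\NN_S(G) = \bigcap_D \gen{D}$, I would show containment in both directions. For $\subseteq$, observe that for every $S$-TD-set $D$ we have $\NN_S(G) \subseteq \gen{D}$ directly from the lemma, so $\NN_S(G)$ is contained in the intersection. For the reverse containment $\bigcap_D \gen{D} \subseteq \NN_S(G)$, I would argue on monomials: since all the ideals involved are monomial ideals, it suffices to check that every monomial $m$ lying in every $\gen{D}$ already lies in $\NN_S(G)$. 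The key observation is that the support of such a monomial must itself be an $S$-TD-set — otherwise its complement would witness a failure, and one can take $D$ to be a carefully chosen $S$-TD-set (or its vertex set) that the monomial avoids, contradicting $m \in \gen{D}$. Once $\supp(m)$ is an $S$-TD-set, Fact~\ref{subset relation of S-TD-set}(2) gives a minimal $S$-TD-set inside it, and then some generator $X_{N(s)}$ divides $m$, placing $m$ in $\NN_S(G)$.

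For the second equality $\bigcap_D \gen{D} = \bigcap_{D\ \mathrm{min}} \gen{D}$, the direction $\subseteq$ is trivial since the minimal $S$-TD-sets form a subcollection of all $S$-TD-sets. For $\supseteq$, I would use Fact~\ref{subset relation of S-TD-set}: every $S$-TD-set $D$ contains a minimal $S$-TD-set $D'$, and $D' \subseteq D$ forces $\gen{D'} \subseteq \gen{D}$, so each factor in the larger intersection already dominates (contains) a factor appearing in the smaller intersection. Hence the two intersections coincide, because adding the extra non-minimal sets contributes only larger ideals that do not shrink the intersection.

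Finally, for irredundancy of the decomposition $\bigcap_{D\ \mathrm{min}} \gen{D}$, I would show that no factor $\gen{D_0}$ (with $D_0$ a minimal $S$-TD-set) can be omitted, equivalently that $\bigcap_{D \neq D_0} \gen{D} \not\subseteq \gen{D_0}$. The natural strategy is to exhibit an explicit monomial that lies in every $\gen{D}$ for $D \neq D_0$ but not in $\gen{D_0}$. A good candidate is the monomial $X_{V \setminus D_0} = \prod_{v \notin D_0} v$, i.e. the product of the variables outside $D_0$: this monomial clearly lies outside $\gen{D_0}$ since it shares no variable with $D_0$, and for each other minimal $S$-TD-set $D$ one must check that $D \not\subseteq D_0$ (which holds by minimality and distinctness), so $D$ contains a vertex outside $D_0$, giving a variable of $X_{V \setminus D_0}$ in $\gen{D}$. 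I expect this last step to be the main obstacle, since it requires verifying that distinct minimal $S$-TD-sets are genuinely incomparable — no minimal $S$-TD-set is contained in another — which follows from minimality but must be stated carefully, and one must confirm the candidate monomial indeed witnesses the needed strict inclusion.
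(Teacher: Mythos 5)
Your forward containment, the reduction from all $S$-TD-sets to minimal ones, and the irredundancy argument are all sound; in particular the explicit witness $X_{V\setminus D_0}$, combined with the observation that distinct minimal $S$-TD-sets are incomparable, is a correct and more hands-on version of the paper's one-line remark that $\gen{A}\subseteq\gen{B}$ if and only if $A\subseteq B$. The genuine gap is in the reverse containment $\bigcap_{D}\gen{D}\subseteq\NN_S(G)$, where both of your key claims are false. A monomial in $\bigcap_{D}\gen{D}$ need not have support equal to an $S$-TD-set: for the path with vertex set $\set{a,b,c}$ and edges $ab$, $bc$, taking $S=V$, one has $\NN_S=\gen{b,ac}=\gen{a,b}\cap\gen{b,c}$, and the monomial $ac$ lies in this intersection even though $N(\set{a,c})=\set{b}\neq V$. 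The implication you use to justify the claim also fails: if $\supp(m)$ is not an $S$-TD-set there need not exist any $S$-TD-set disjoint from $\supp(m)$ (same example: no TD-set is contained in $\set{b}$). Your closing step is false as well: a set containing a minimal $S$-TD-set need not contain $N(s)$ for any $s\in S$, so no generator need divide $m$. In the $4$-cycle on $v_1,v_2,v_3,v_4$ the set $\set{v_1,v_2}$ is a minimal TD-set, yet none of the neighborhoods $N(v_i)$ is contained in it, and indeed no generator of $\NN(C_4)=\gen{v_2v_4,v_1v_3}$ divides $v_1v_2$; the theorem is stated for arbitrary finite simple graphs, so this is in scope.

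The monomial argument can be repaired by looking at the complement of the support rather than the support. Given a monomial $m\in\bigcap_{D}\gen{D}$, the set $W:=V\setminus\supp(m)$ cannot be an $S$-TD-set: otherwise $\gen{W}$ would be one of the ideals in the intersection, and $m\in\gen{W}$ would force some variable of $W$ to divide $m$, i.e.\ $W\cap\supp(m)\neq\emptyset$, which is absurd. Hence there is some $s\in S$ with $s\notin N(W)$, which says exactly that $N(s)\cap W=\emptyset$, i.e.\ $N(s)\subseteq\supp(m)$; since $X_{N(s)}$ is square-free, it divides $m$, so $m\in\NN_S(G)$. Note that Fact~\ref{subset relation of S-TD-set}(2) plays no role in this step. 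For comparison, the paper sidesteps the monomial computation entirely: it invokes the standard decomposition of the square-free monomial ideal $\NN_S(G)$ as a finite intersection $\bigcap_{i}\gen{D_i}$ of monomial primes, observes that each $D_i$ satisfies $\NN_S(G)\subseteq\gen{D_i}$ and hence is an $S$-TD-set by Lemma~\ref{lem. S-TD-set iff P_V' contains SON}, and concludes $\NN_S(G)=\bigcap_{i}\gen{D_i}\supseteq\bigcap_{D}\gen{D}$.
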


\begin{proof}
    For any $A,B \subseteq V$, we have $\gen A \subseteq \gen B$ if and only if $A \subseteq B$.
    Hence the second intersection is irredundant.
    Now let $V' \subseteq V$ be an $S$-TDS which is not minimal.
    Then $V'$ contains a minimal $S$-TDS by Fact~\ref{subset relation of S-TDS}.
    So, we have $\bigcap_{D}\gen{D} = \bigcap_{D \neq V'}\gen D$.
    Since $V$ is finite, by repeating the same argument finitely many times, we can conclude that $\bigcap_{D} \gen{D} = \bigcap_{D\ min} \gen{D}$.

    By Lemma~\ref{lem. S-TDS iff P_V' contains SON}, we get $\NN_S(G) \subseteq \bigcap_{D} \gen{D}$.
    For the containment $\NN_S(G)\supseteq \bigcap_{D} \gen{D}$, since $\NN_S(G)$ is a square-free monomial, there are subsets $D_1,\dots,D_k\subseteq V$ such that $\NN_{S}(G) = \bigcap_{i = 1}^k \gen{D_i}$.
    For any index $j \in [k]$, we have $\NN_S(G) \subseteq \gen{D_j}$, which implies that $D_j$ is an $S$-TDS by Lemma~\ref{lem. S-TDS iff P_V' contains SON}.
    Thus we get $\NN_S(G) = \bigcap_{i = 1}^k \gen{D_i} \supseteq \bigcap_{D} \gen{D}$.
\end{proof}

Now we consider a specific case when $S = V$, the entire vertex set.

\begin{definition}\label{def. TDS}
A subset $D \subseteq V(G)$ is a \emph{total dominating set (TDS) of $G$} if $N(D) = V(G)$.
We say $D$ is a \emph{minimal TDS} if no proper subset of $D$ is a TDS.
We say $G$ is \emph{WTD} if every minimal TDS of $G$ has the same size.
\end{definition}

The minimal TDS problem on graphs is a well studied subject in graph theory \cite{MR3060714}, and finding the size of the smallest minimal TDS of a graph turns out to be an NP-complete problem \cite{MR752045}.
Also, finding and characterising WTD graphs is an interesting problem \cite{MR942485, MR4197372}. 
In this paper, we focus on the WTD-ness of trees to establish our goal.

\begin{definition}\label{def. open neig. ideal}
    The \emph{open neighborhood ideal of $G$} in $R = A[V]$ is the ideal generated by the monomials of the open neighborhoods of the vertices of $G$
    $$
    \NN(G) := \NN_V(G) = \gen{X_{N_G(v)} \mid v \in V}.
    $$
\end{definition}

\begin{example}\label{ex. ONI of P_6 plus a leaf}
%    Consider the graph $T$ and the ring $R$ in Example \ref{ex. P_6 with a leaf}.
    Let $T$ be the following tree with vertex set $V = \set{v_1,v_2,\dots,v_8}$.
    \begin{figure}[ht]
        \centering
        \includegraphics{P_6_with_a_leaf.pdf}
        \label{fig:enter-label}
    \end{figure}
    
    \noindent Then $R = A[V] = A[v_1,\dots,v_8]$, and we have
    \begin{align*}
        \NN(T) 
        = \gen{v_4,v_5,v_1v_2v_6,v_3v_7,v_4v_8, v_5v_8, v_6v_5}
        = \gen{v_4,v_5,v_1v_2v_6,v_3v_7, v_6v_5}.
    \end{align*}
\end{example}

We get the following decomposition theorem of open-neighborhood ideals as a corollary of Theorem~\ref{thm. decomp of SON}. When $A$ is a field, this gives the (minimal) primary decomposition of $\NN(G)$.

\begin{theorem}\label{decomp of ONI}
The open neighborhood ideal of $G$ in $R = A[V]$ has the following decomposition
    $$
    \NN(G) = \bigcap_{V'}\gen{V'} = \bigcap_{V'\ min}\gen{V'}
    $$
    where the first intersection is taken over all TDSs of $G$, and the second intersection is taken over all minimal TDSs of $G$.
    Moreover, the second decomposition is irredundant.
\end{theorem}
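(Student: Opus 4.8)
The plan is to derive Theorem~\ref{decomp of ONI} directly from Theorem~\ref{thm. decomp of SON} by specializing to the case $S = V$. The key observation is that $\NN(G) = \NN_V(G)$ by Definition~\ref{def. open neig. ideal}, and that a $V$-TD-set of $G$ (in the sense of Definition~\ref{def. S-TD-set, SONI}, requiring $N(D) \supseteq V$) is precisely a TD-set of $G$ (in the sense of Definition~\ref{def. TD-set}, requiring $N(D) = V$), since $N(D) \subseteq V$ always holds. Likewise a minimal $V$-TD-set is exactly a minimal TD-set.

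The first step is to record this dictionary: a subset $D \subseteq V$ is a TD-set if and only if it is a $V$-TD-set, and $D$ is a minimal TD-set if and only if it is a minimal $V$-TD-set. The second step is simply to apply Theorem~\ref{thm. decomp of SON} with $S = V$, which immediately gives
\begin{align*}
\NN(G) = \NN_V(G) = \bigcap_{D} \gen{D} = \bigcap_{D\ min} \gen{D},
\end{align*}
where the first intersection runs over all $V$-TD-sets and the second over all minimal $V$-TD-sets, together with the irredundancy of the second decomposition. Rewriting these as intersections over TD-sets and minimal TD-sets via the dictionary from the first step yields exactly the claimed statement.

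I do not anticipate a genuine obstacle here, since this is a specialization of an already-proved theorem; the only care needed is in verifying the equivalence of the two notions of (minimal) domination. The single subtle point is confirming that the condition $N(D) \supseteq V$ collapses to $N(D) = V$, which is immediate because open neighborhoods are subsets of $V$. One could also remark, to address the parenthetical claim in the statement, that when $A = \kk$ is a field each $\gen{V'}$ is a prime (monomial) ideal, so the irredundant intersection over minimal TD-sets is in fact the minimal primary decomposition of $\NN(G)$; this follows from standard properties of squarefree monomial ideals and does not require any new argument.
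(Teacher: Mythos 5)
Your proposal is correct and matches the paper's approach exactly: the paper states Theorem~\ref{decomp of ONI} as an immediate corollary of Theorem~\ref{thm. decomp of SON} with $S = V$, relying on the same identification of $V$-TD-sets with TD-sets. Your explicit verification that $N(D) \supseteq V$ collapses to $N(D) = V$ is the only detail the paper leaves implicit.
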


\begin{example}
\label{ex. Path with 6 vert.}
Consider the tree $T$ and its open neighborhood ideal from Example~\ref{ex. ONI of P_6 plus a leaf}.
One can show that the following sets are the minimal TDSs of~$T$:
\begin{align*}
    \set{v_4,v_5,v_6,v_7} & & \set{v_1,v_4,v_5,v_7} & & \set{v_2,v_4,v_5,v_7} & & \set{v_4,v_5,v_6,v_3}.  
\end{align*}
Thus $T$ is WTD, and by Theorem~\ref{decomp of ONI}, we get
\begin{align*}
    \NN(T) &= \gen{v_4,v_5,v_1v_2v_6,v_3v_7, v_6v_5}\\
    &= \gen{v_4,v_5,v_6,v_7} \cap \gen{v_1,v_4,v_5,v_7} \cap \gen{v_2,v_4,v_5,v_7} \cap \gen{v_4,v_5,v_6,v_3}.
\end{align*}
On the other hand it is straightforward to show that the path $P_4$
\begin{center}
    \begin{tikzpicture}[every edge/.style={draw,thick}]
    \node (l1) at (0,0) {$\ell_1$};
    \node (s1) at (2,0) {$s_1$};
    \node (r)  at (4,0) {$u$};
    \node (s2) at (6,0) {$s_2$};
    \node (l2) at (8,0) {$\ell_2$};
    \draw (l1) -- (s1) -- (r) -- (s2) -- (l2);
\end{tikzpicture}
\end{center}
is not WTD because
\[    
\NN(P_4) = \gen{s_1, \ell_1u, s_1s_2, u\ell_2, s_2} = \gen{s_1,s_2,\ell_1,\ell_2} \cap \gen{s_1,s_2,u}.
\]
\end{example}

\section{Characterizations of WTD trees}
\label{sec241204b}

Our goal in this section is to overview a characterization of WTD trees.
It is a descriptive characterization, allowing one to detect whether a given tree $T$ is WTD; see Theorem~\ref{thm. char. of WTD trees}.
Essentially, $T$ has two noteworthy ``interior" subgraphs, $T_\R$ and $T_\B$, which are induced subforests that arise from a red-blue coloring of $T$, and such that $T$ is WTD if and only if $T_\R$ and $T_\B$ are WTD \cite[Corollary 5.8]{COURAGE_GT}.
The connected components of $T_\R$ and $T_\B$ have a property that we call ``balanced."
The main result of which is the descriptive characterization of balanced trees in Theorem~\ref{thm. char. of WTD delt. trees}.

\begin{assumptions}
In this section, we assume that $T = (V,E)$ is a tree with a 2-coloring $\chi: V \to \set{\R,\B}$ where $\R$ and $\B$ stands for red and blue color, respectively.
\end{assumptions}

\subsection{Balanced trees, RD-, BD-sets}

We begin with the definition of height.

\begin{definition}
Let $G$ be a graph with at least one leaf (vertex of degree 0).
The \emph{height} of a vertex $v \in V$ is given by
$$
\hh(v) := \min\Set{\dist(v,\ell)}{\ell \mbox{ is a leaf in } G}.
$$ 
If $v$ is an isolated vertex, then we set $\hh(v) = 0$ as a convention. 
We denote 
$$
V_k = V_k(G) := \{v \in V\ |\  \hh(v) = k\}
$$ 
and the \emph{height} of $G$ is the integer 
$$
\hh(G) := \max\Set{k \in \mathbb{N}}{V_k \neq \emptyset}.
$$
For instance, every leaf of a graph has height 0, and every non-leaf support vertex (vertex that are adjacent to a leaf) has height~1.
\end{definition}

\begin{definition}\label{def. delt. tree}
    The tree $T$ is \emph{balanced} if no two vertices of the same height are adjacent.
\end{definition}

Proposition~\ref{prop. equiv. delt. tree} gives an equivalent definition of balanced trees.

\begin{proposition}[\protect{\cite[Proposition 3.4]{COURAGE_GT}}]\label{prop. equiv. delt. tree}
The following conditions on a tree $T$ are equivalent:
\begin{enumerate}
	\item $T$ is balanced.
	\item Any two vertices of the same height have the same color under $\chi$.
	\item Every leaf has the same color.
\end{enumerate}
\end{proposition}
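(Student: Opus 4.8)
The plan is to establish the equivalence by proving the cyclic chain of implications $(1) \Rightarrow (2) \Rightarrow (3) \Rightarrow (1)$. The central observation I would isolate first is that, since $\chi$ is a proper $2$-coloring and $T$ is connected, the colors alternate along every path; hence for any two vertices $x,y \in V$ we have $\chi(x) = \chi(y)$ if and only if $\dist(x,y)$ is even. I would record this parity principle at the outset, since every implication rests on it together with Corollary~\ref{cor. same hh -> odd dist}.

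For $(1) \Rightarrow (2)$, suppose $T$ is balanced and let $u,v \in V_h$ for some $h$. By Corollary~\ref{cor. same hh -> odd dist}, $\dist(u,v)$ is even, so the parity principle gives $\chi(u) = \chi(v)$. The implication $(2) \Rightarrow (3)$ is immediate: every leaf has height $0$, so all leaves lie in $V_0$ and therefore share a color by $(2)$.

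The substantive step is $(3) \Rightarrow (1)$, which I would argue by contraposition. Assume $T$ is not balanced, so there are adjacent vertices $u,v$ with $\hh(u) = \hh(v) =: h$. Choose leaves $\ell_u, \ell_v$ realizing these heights, i.e.\ $\dist(u,\ell_u) = \dist(v,\ell_v) = h$. Since $uv \in E$ and $\chi$ is proper, $\chi(u) \neq \chi(v)$; as there are only two colors, this forces $\chi(\ell_u) \neq \chi(\ell_v)$ regardless of the parity of $h$. Indeed, when $h$ is even the parity principle gives $\chi(\ell_u) = \chi(u) \neq \chi(v) = \chi(\ell_v)$, and when $h$ is odd it gives $\chi(\ell_u) \neq \chi(u)$ and $\chi(\ell_v) \neq \chi(v)$, whence $\chi(\ell_u) = \chi(v) \neq \chi(u) = \chi(\ell_v)$. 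Either way two leaves receive different colors, so $(3)$ fails.

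The only real obstacle I anticipate is the parity bookkeeping in this last step; everything else is a direct appeal to Corollary~\ref{cor. same hh -> odd dist} and the properness of $\chi$. An equivalent and perhaps cleaner packaging of $(3) \Rightarrow (1)$ would be to first deduce from $(3)$ that $\chi(w)$ depends only on the parity of $\hh(w)$ --- because all leaves share a color, the distance from a fixed vertex to every leaf has a single fixed parity, namely that of $\hh(w)$ --- and then observe that adjacent vertices of equal height would be assigned the same color, contradicting the properness of $\chi$.
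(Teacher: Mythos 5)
Your proposal is correct and follows essentially the same route as the paper: $(1)\Rightarrow(2)$ via Corollary~\ref{cor. same hh -> odd dist} and the alternation of colors along paths, $(2)\Rightarrow(3)$ immediately since leaves have height $0$, and $(3)\Rightarrow(1)$ using leaves that realize the heights of the two vertices in question. Your phrasing of the last step as a contrapositive, with the parity bookkeeping made explicit, is only a cosmetic reorganization of the paper's direct argument.
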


\begin{example}
    Consider the following trees with 2-coloring.
    \begin{figure}[H]
        \centering
        \includegraphics{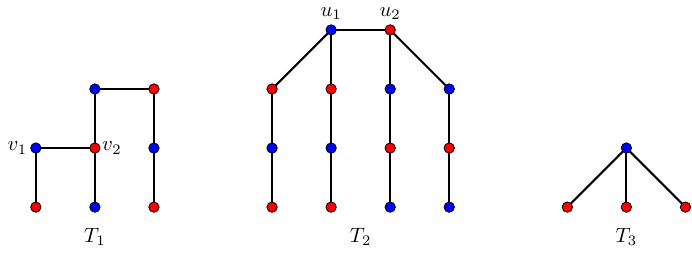}
        \caption{Trees with 2-colorings}
        \label{fig. 3 trees with coloring}
    \end{figure}        
    \noindent The tree $T_1$ is not balanced since $\hh(v_1) = 1 = \hh(v_2)$ but $v_1v_2 \in E(T_1)$.
    Similarly, $u_1$ and $u_2$ show that $T_2$ is not balanced.
    We can also deduce this from Proposition~\ref{prop. equiv. delt. tree} since the leaves in each tree do not have the same color.
    On the other hand, $T_3$ is a balanced tree by definition, or since all of its leaves share the same color.
\end{example}

The following corollary is a quick consequence of Proposition~\ref{prop. equiv. delt. tree}.

\begin{corollary}\label{cor. same height parity -> same color}
    Let $T$ be a balanced tree.
    Then all vertices of even height have the same color, and all vertices of odd height have the same color.
\end{corollary}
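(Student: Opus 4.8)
The plan is to deduce this directly from Proposition~\ref{prop. equiv. delt. tree} together with Corollary~\ref{cor. same hh -> odd dist}. The statement asserts two things: all vertices of even height share one color, and all vertices of odd height share one (possibly different) color. Since the 2-coloring $\chi$ takes only the two values $\R$ and $\B$, it suffices to show that any two vertices of the same parity of height receive the same color under $\chi$.

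First I would fix two vertices $u,v \in V$ with $\hh(u)$ and $\hh(v)$ of the same parity, and reduce the problem to a distance computation. The natural tool is Corollary~\ref{cor. same hh -> odd dist}, but that result is stated only for vertices of equal height, not merely equal parity of height, so I cannot apply it verbatim. Instead I would argue at the level of the path $\path(u,v) = (u = w_0, w_1, \dots, w_d = v)$ in the tree. By Lemma~\ref{lem. adjacent, then height differ by 1}, consecutive heights along this path differ by exactly $\pm 1$, so $\hh(w_{i+1}) - \hh(w_i) = \pm 1$ for each $i$. Summing these increments telescopes to $\hh(v) - \hh(u) = \sum_{i=0}^{d-1}(\hh(w_{i+1}) - \hh(w_i))$, a sum of $d$ terms each equal to $\pm 1$; hence $\hh(v) - \hh(u) \equiv d \pmod 2$. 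Since $\hh(u)$ and $\hh(v)$ have the same parity, $\hh(v) - \hh(u)$ is even, forcing $d = \dist(u,v)$ to be even as well.

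Once $\dist(u,v)$ is even, the conclusion $\chi(u) = \chi(v)$ is immediate: a 2-coloring assigns opposite colors to adjacent vertices, so along the path of even length the color alternates and returns to its starting value, giving $\chi(u) = \chi(v)$. Applying this to the even-height class and to the odd-height class separately yields the two asserted monochromatic classes.

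The step I expect to require the most care is the parity bookkeeping in the telescoping argument, specifically making sure Lemma~\ref{lem. adjacent, then height differ by 1} genuinely applies to every consecutive pair $w_i w_{i+1}$ on the path (they are distinct adjacent vertices in the balanced tree $T$, so it does) and that the $\pm 1$ increments are correctly summed modulo $2$. An alternative, perhaps cleaner, route would be to invoke Proposition~\ref{prop. equiv. delt. tree}$(3)$ directly: all leaves share a color, and climbing from a leaf along a path of length $\hh(v)$ toggles the color $\hh(v)$ times, so $\chi(v)$ is determined by the parity of $\hh(v)$ alone. This avoids the two-vertex distance argument entirely and makes the even/odd dichotomy transparent, so I would likely present that version as the main line and keep the telescoping argument in reserve.
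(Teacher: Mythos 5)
Your argument is correct, and it matches the paper's intent: the paper states this corollary without proof as a ``quick consequence'' of Proposition~\ref{prop. equiv. delt. tree}, and both of your routes (the telescoping parity argument via Lemma~\ref{lem. adjacent, then height differ by 1}, and the cleaner one climbing from a leaf using Proposition~\ref{prop. equiv. delt. tree}(3)) are faithful elaborations of the omitted details. The only implicit ingredient you rely on --- that the 2-coloring is proper, so colors alternate along a path --- is exactly how the paper uses $\chi$ throughout, so there is no gap.
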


Next, we define two types of $S$-TDSs for balanced trees.
Fact~\ref{fact. TDS iff RD-set U BD-set} is used in Section~\ref{sec241204c}.

\begin{notation}
    Denote the set of red and blue vertices of $T$ by $V_{\R}$ and $V_{\B}$, respectively; i.e., for all $C \in \set{\R,\B}$, define $V_C := \chi^{-1}(C)$.
\end{notation}
    
\begin{definition}
    Let $D \subseteq V$.
    Then $D$ is a \emph{red-dominating set} (RD-set) if $N(D) = V_\R$. 
    And $D$ is a \emph{blue-dominating set} (BD-set) if $N(D) = V_\B$.
\end{definition}

Note that by the definition of 2-coloring, if $D$ is an RD-set, then $D \subseteq V_\B$, and $D \subseteq V_\R$ if $D$ is a BD-set.
Hence we can state the following as a fact.

\begin{fact}\label{fact. TDS iff RD-set U BD-set}
    Let $D' \subseteq V_\B$, $D'' \subseteq V_\R$, and $D = D' \cup D''$. 
    Then
    \begin{itemize}
        \item[(1)] $D$ is a TDS if and only if $D'$ and $D''$ are red-dominating and blue-dominating, respectively.
        \item[(2)] $D$ is a minimal TDS if and only if $D'$ and $D''$ are minimal red-dominating and minimal blue-dominating, respectively.
    \end{itemize}
\end{fact}

\begin{example}
Consider the tree $Y$ with vertices colored red and blue in Figure~\ref{fig. Yifan Graph colored}.
\begin{figure}[ht]
\centering
\includegraphics{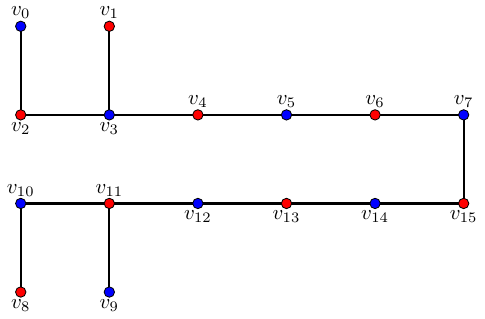}
\caption{Graph $Y$ with a 2-coloring}
\label{fig. Yifan Graph colored}
\end{figure}
The set $D := \{\tcr{v_2}, \tcb{v_3}, \tcb{v_5}, \tcr{v_6}, \tcb{v_{10}}, \tcr{v_{11}}, \tcb{v_{14}}, \tcr{v_{15}}\}$ is a minimal TDS of $Y$.
Now, we partition the set $D$ by the coloring: set  $D' := \{ \tcb{v_3}, \tcb{v_5},  \tcb{v_{10}},  \tcb{v_{14}}\}$ and $D'' := \{\tcr{v_2},\tcr{v_6},\tcr{v_{11}},\tcr{v_{15}}\}$.
Then it is straightforward to show that $D'$ and $D''$ are minimal red-dominating and blue dominating, respectively.
\end{example}

\subsection{Descriptive characterization of WTD balanced trees}\label{subsec. des. char. of WTD bal. trees}

Theorem~\ref{thm. char. of WTD delt. trees} is the main result of this subsection.
It allows us to detect when a balanced tree is WTD.
The case of an arbitrary tree is handled by Theorem~\ref{thm. char. of WTD trees} below.

\begin{theorem}[\protect{\cite[Theorem 4.12]{COURAGE_GT}}]
\label{thm. char. of WTD delt. trees}
Let $T$ be a balanced tree.
Then $T$ is WTD if and only if
\begin{enumerate}
    \item $\hei(T) \leq 3$,
    \item for all $v \in V_2$, $|N(v) \cap V_1| = 1$, and
    \item for all $v \in V_1$, $|N(v) \cap V_2| \leq 1$.
\end{enumerate}
\end{theorem}

As a remark, the inequality in (3) becomes an equality if $\hei(T) = 3$.
Now we define the interior graphs of a tree $T$ which is used to obtain our descriptive characterization of WTD trees in Theorem~\ref{thm. char. of WTD trees}.
The interior graphs are also used to compute the Cohen-Macaulay type of open neighborhood ideals in Section~\ref{sec241204d}.

\begin{definition}\label{def. interior graphs}
    We define the \emph{blue interior graph} of $T$ to be the subgraph $T_\B$ of $T$ induced by the set 
    $$
    V \setminus N[V_1 \cap \chi^{-1}(\B)]
    $$
    where $N[U] := N(U) \cup U$ is the \emph{closed neighborhood of $U$} for any $U \subseteq V$; i.e., $T_B$ is the subgraph of $T$ induced by deleting all blue support vertices together with their neighbors.
    Similarly, we define $T_\R$ to be the \emph{red interior graph} of $T$ induced by the set
    $$
    V \setminus N[V_1 \cap \chi^{-1}(\R)].
    $$
\end{definition}

\begin{example}\label{ex. decomp. of a tree}
Figure \ref{fig. 2-colored T} shows a 2-colored tree $T$, and Figure~\ref{fig. T_B T_R of T} below shows its blue and red interior graphs. This example shows that the interior graphs can be forests.
\begin{figure}[ht]
        \centering
        \includegraphics{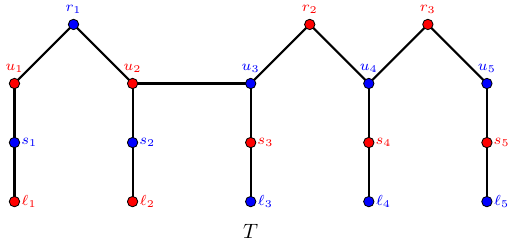}
        \caption{A 2-coloring of a tree $T$}
        \label{fig. 2-colored T}
\end{figure}
\begin{figure}[ht]
        \centering
        \includegraphics{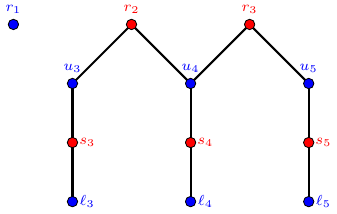}
        \qquad
        \vline
        \qquad
        \includegraphics{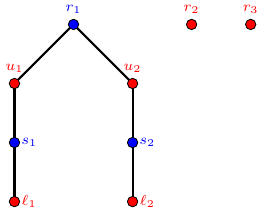}
        \caption{Interior graphs $T_\B$ (left) and $T_\R$ (right) of $T$ from Figure \ref{fig. 2-colored T}}
        \label{fig. T_B T_R of T}
\end{figure}    
\end{example}

\begin{example}\label{ex. T, T_B = T_R}
    Consider the tree $T$ in Figure \ref{fig. tree_with_all_h}.
    Its interior graphs $T_\B$ and $T_\R$ are shown in Figure  \ref{fig. T_B T_R of tree with all h}. 
    For this specific tree $T$, its interior graphs are isomorphic.

    \begin{figure}[ht]
        \centering
        \includegraphics{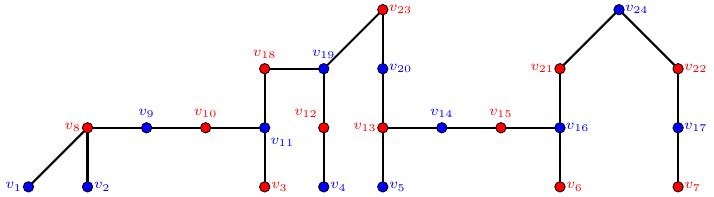}
        \caption{A tree $T$}
        \label{fig. tree_with_all_h}
    \end{figure}

    \begin{figure}[ht]
        \centering
        \includegraphics[width = 0.449\textwidth]{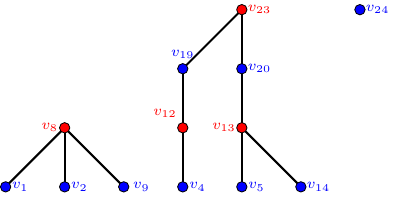}
        \quad
        \vline
        \quad
        \includegraphics[width = 0.449\textwidth]{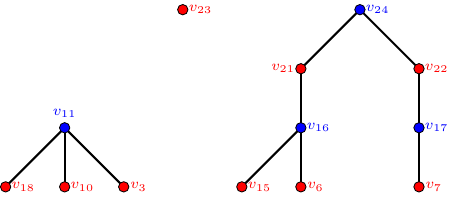}
        \caption{$T_\B$ (left) and $T_\R$ (right) of $T$ from Figure \ref{fig. tree_with_all_h}}
        \label{fig. T_B T_R of tree with all h}
    \end{figure}
\end{example}

Each connected component of the interior graphs in Examples  \ref{ex. T, T_B = T_R} and \ref{ex. decomp. of a tree} is a balanced tree.
Here is the characterization of WTD trees.

\begin{theorem}[\protect{\cite[Theorem 5.9]{COURAGE_GT}}]\label{thm. char. of WTD trees}
    A tree $T$ is WTD if and only if every connected component $T'$ of $T_\B$ and $T_\R$ has
    \begin{enumerate}
        \item $\hh(T') \leq 3$,
        \item for all $v \in V_2(T')$, we have $|N_{T'}(v) \cap V_1(T')| = 1$, and
        \item for all $v \in V_1(T')$, we have $|N_{T'}(v) \cap V_2(T')| \leq 1$.
    \end{enumerate}
    In other words, $T$ is WTD if and only if $T_\B$ and $T_\R$ are WTD.
\end{theorem}

\section{Shellability of Stable complexes}
\label{sec241204c}

Our goal in this section is to prove that the Stanley-Reisner complex \cite{Stanley_CCA} of open neighborhood ideals of WTD trees, called stable complexes, is shellable; see Theorem~\ref{thm. stable complex is shellable}.
In particular, we explicitly construct the shelling order of these complexes.
In subsection~\ref{subsec. stable complexes}, we define stable complexes of graphs and even-stable complexes of balanced trees, which correspond to the Stanley-Reisner complexes of open neighborhood ideals of graphs and odd-open neighborhood ideals of balanced trees, respectively.
Then we show that stable complexes of trees are the join of even-stable complexes of its interior graphs in Theorem~\ref{thm. stable comp. is join of even complexes}.
In subsection~\ref{subsec. Shellability of even stable complexes}, we show that the even-stable complexes of WTD balanced trees are shellable in Theorem~\ref{thm. even stable complex is shellable}, which implies the main theorem of this section.

\subsection{Stable complexes of graphs}\label{subsec. stable complexes}
In this subsection, we define the stable complexes of graphs and the even-stable complexes of balanced trees; see Definitions~\ref{def. stable complex} and \ref{def. even stable complex of delta trees}. 
We then show that the stable complexes of trees are the join of the even-stable complexes of its interior graphs; see Theorem~\ref{thm. stable comp. is join of even complexes}.
To start, we review some basic notions of simplicial complexes.

Let $V$ be a finite set. 
We say $\Delta \subseteq 2^V$ (where $2^V$ denotes the power set of $V$) is a \emph{simplicial complex} if $F \in \Delta$ and $G \subset F$ implies $G \in \Delta$.
Elements in $\Delta$ are called \emph{faces}, and the maximal elements (under inclusion) in $\Delta$ are called \emph{facets}.
We denote the set of all facets of $\Delta$ by $\mathcal{F}(\Delta)$.
If $\calF(\Delta) = \set{ F_1,\dots,F_m}$, then we also write $\Delta = \gen{F_1,\dots,F_m}$.
In this case, we say $\Delta$ is \emph{generated} by $F_1,\dots,F_m$.
The \emph{dimension} of a face $F \in \Delta$ is $\dim(F) := |F| - 1$, and the \emph{dimension} of $\Delta$ is $\dim(\Delta) := \max\Set{\dim(F)}{F \in \Delta}$.
We say $\Delta$ is \emph{pure} if every facet in $\Delta$ has the same dimension.

\begin{definition}\label{def. stable complex}
    Let $G = (V,E)$ be a graph.
    A set $V' \subseteq V$ is \emph{stable} if $V\setminus V'$ is a TDS.
    We say $V'$ is \emph{maximal} if $V\setminus V'$ is minimal.
    The \emph{stable complex} of $G$ is the simplicial complex on $V$ generated by the maximal stable sets of $G$:
    $$
    \calS(G) := \gen{F \subseteq V\mid F \text{ is a maximal stable set}} = \gen{F \subseteq V\mid V\setminus F \text{ is a minimal TDS}}.
    $$
\end{definition}

Stable sets and their properties appear in the transversal theory of hypergraphs; see \cite{hypergraphs_B} for more details.

Let $\Delta$ be a simplicial complex on $V = \set{x_1,\dots,x_n}$.
The \emph{Stanley-Reisner ideal} associated with $\Delta$ is the square-free monomial ideal in $R := \kk[V]$
$$
I_\Delta := \gen{X_F \mid F \not\in \Delta}.
$$
On the other hand, let $I \subseteq R$ be a square-free monomial ideal.
Then the \emph{Stanley-Reisner (simplicial) complex} associated to $I$ is the simplicial complex on $V$
$$
\Delta_{I} := \Set{F \subseteq V}{X_F \not\in I}
$$
where we assume the product of elements in the empty set to be $1 \in R$.
The following fact is a basic property of stable complexes. 
Its proof is a routine application of Theorem~\ref{decomp of ONI}.

\begin{fact}\label{fact. SRI of RC is ONI}
Let $G = (V,E)$ be a graph.
Then $I_{\calS(G)} = \NN(G)$ and $\Delta_{\NN(G)} = \calS(G)$.
\end{fact}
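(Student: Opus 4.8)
The plan is to prove the two identities $I_{\calS(G)} = \NN(G)$ and $\Delta_{\NN(G)} = \calS(G)$ directly from the definitions, using Theorem~\ref{decomp of ONI} as the bridge between the combinatorial data (minimal TD-sets) and the algebraic data (the decomposition of $\NN(G)$). The key observation is that the Stanley-Reisner correspondence $I \mapsto \Delta_I$ and $\Delta \mapsto I_\Delta$ are mutually inverse bijections between square-free monomial ideals and simplicial complexes on $V$; so it actually suffices to establish just one of the two identities, since the other follows by applying the appropriate inverse map. I would establish $\Delta_{\NN(G)} = \calS(G)$ first, as the face-level description is the most transparent.

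First I would unwind $\Delta_{\NN(G)}$ at the level of facets. By Theorem~\ref{decomp of ONI}, we have $\NN(G) = \bigcap_{V'\ min} \gen{V'}$ where the intersection runs over minimal TD-sets $V'$. A face $F \subseteq V$ lies in $\Delta_{\NN(G)}$ exactly when $X_F \notin \NN(G)$, i.e.\ when $X_F \notin \gen{V'}$ for some minimal TD-set $V'$. Now $X_F \notin \gen{V'}$ means no vertex of $V'$ divides $X_F$, i.e.\ $F \cap V' = \emptyset$, equivalently $F \subseteq V \setminus V'$. Thus $F \in \Delta_{\NN(G)}$ if and only if $F \subseteq V \setminus V'$ for some minimal TD-set $V'$; in other words, the facets of $\Delta_{\NN(G)}$ are precisely the complements $V \setminus V'$ of minimal TD-sets. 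By Definition~\ref{def. stable complex}, these complements are exactly the maximal stable sets of $G$, so $\Delta_{\NN(G)} = \calS(G)$, as desired.

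Next I would deduce $I_{\calS(G)} = \NN(G)$ from this. One route is to invoke the standard fact that $\Delta \mapsto I_\Delta$ and $I \mapsto \Delta_I$ invert each other: since $\Delta_{\NN(G)} = \calS(G)$, applying $I_{(-)}$ gives $I_{\calS(G)} = I_{\Delta_{\NN(G)}} = \NN(G)$, using that $\NN(G)$ is itself a square-free monomial ideal. Alternatively, to keep the argument self-contained, I would verify the identity directly: $I_{\calS(G)} = \gen{X_F \mid F \notin \calS(G)}$, and $F \notin \calS(G)$ means $F$ is not contained in any maximal stable set, i.e.\ $F$ meets every minimal TD-set $V'$, which forces $X_F \in \gen{V'}$ for every such $V'$ and hence $X_F \in \bigcap_{V'\ min}\gen{V'} = \NN(G)$; conversely every minimal generator $X_{N(v)}$ of $\NN(G)$ corresponds to a non-face, giving the reverse containment after reducing to square-free generators.

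The routine part is the bookkeeping translating ``$X_F \notin \gen{V'}$'' into ``$F \cap V' = \emptyset$,'' which is immediate for monomial ideals. I do not anticipate a genuine obstacle here, since Theorem~\ref{decomp of ONI} already supplies the decomposition that links $\NN(G)$ to minimal TD-sets, and Definition~\ref{def. stable complex} was set up precisely so that maximal stable sets are complements of minimal TD-sets. The only point requiring a little care is ensuring that $\Delta_{\NN(G)}$ really is generated by the complements $V \setminus V'$ (rather than by some larger faces): this is where one uses that the decomposition in Theorem~\ref{decomp of ONI} ranges over \emph{minimal} TD-sets, so that the complements $V \setminus V'$ are the inclusion-maximal faces and thus the facets.
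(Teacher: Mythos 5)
Your proposal is correct and follows exactly the route the paper intends: the paper gives no detailed proof, remarking only that the statement is ``a routine application of Theorem~\ref{decomp of ONI},'' and your argument is precisely that application spelled out (translate $X_F\notin\gen{V'}$ into $F\subseteq V\setminus V'$ over the minimal TD-sets, then recover $I_{\calS(G)}=\NN(G)$ from the inverse Stanley--Reisner correspondence). No gaps.
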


The following definition and fact are standard results in the theory of simplicial complexes.

\begin{definition}\label{def. join of simp comp}
    Let $\Delta'$ and $\Delta''$ be simplicial complexes on $V'$ and $V''$, respectively, and assume that $V' \cap V'' = \emptyset$.
    The \emph{join} of $\Delta'$ and $\Delta''$ is the simplicial complex on $V' \cup V''$:
    $$
    \Delta' * \Delta'' := \Set{F' \cup F''}{F' \in \Delta',F'' \in \Delta''}.
    $$
\end{definition}

\begin{fact}[\protect{\cite{shellable_join}}]\label{fact. D'*D'' is shell <->  D',D'' are shell}
    Let $\Delta'$ and $\Delta''$ be simplicial complexes as in Definition~\ref{def. join of simp comp}.
    Then $\Delta'*\Delta''$ is shellable if and only if $\Delta'$ and $\Delta''$ are shellable.
\end{fact}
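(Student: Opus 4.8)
The plan is to argue directly from the standard combinatorial definition of shellability, in the form valid for possibly non-pure complexes: an ordering $F_1,\dots,F_t$ of the facets of a simplicial complex is a \emph{shelling} if for all $i<j$ there exist an index $\ell<j$ and a vertex $x\in F_j\setminus F_i$ with $F_j\setminus\set{x}\subseteq F_\ell$. The one structural input needed is the description of the facets of the join. Since $V'\cap V''=\emptyset$, every face of $\Delta'*\Delta''$ decomposes uniquely as $F=F'\cup F''$ with $F'=F\cap V'\in\Delta'$ and $F''=F\cap V''\in\Delta''$, inclusion of faces corresponds to inclusion in each coordinate, and $F$ is a facet of $\Delta'*\Delta''$ exactly when $F'$ is a facet of $\Delta'$ and $F''$ is a facet of $\Delta''$. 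Everything below reduces to this coordinatewise bookkeeping.

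For the implication that shellability of $\Delta'$ and $\Delta''$ yields shellability of $\Delta'*\Delta''$, I would fix shellings $F'_1,\dots,F'_p$ of $\Delta'$ and $F''_1,\dots,F''_q$ of $\Delta''$ and order the facets $F'_a\cup F''_c$ of the join lexicographically by the index pair $(a,c)$. To verify the shelling condition for two facets with $(a,c)$ preceding $(b,d)$, I split into the case $a=b$ (so $c<d$) and the case $a<b$. In the first case I pull the witnessing vertex $x$ and the earlier facet out of the shelling of $\Delta''$; in the second case out of the shelling of $\Delta'$. In each case the disjointness $V'\cap V''=\emptyset$ guarantees that $x$ still lies in $F'_b\cup F''_d$ but not in $F'_a\cup F''_c$, and that the witnessing earlier facet, recombined with the unchanged second (respectively first) coordinate, precedes $(b,d)$ lexicographically. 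This is a routine verification with no surprises.

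For the converse, suppose $H_1,\dots,H_m$ is a shelling of $\Delta'*\Delta''$; I want to extract a shelling of $\Delta'$, after which $\Delta''$ follows by symmetry. I would fix any facet $\sigma$ of $\Delta''$. Because $\sigma$ is maximal in $\Delta''$, a facet $F'\cup F''$ of the join contains $\sigma$ as a subset if and only if $F''=\sigma$; hence the facets of the join containing $\sigma$ are exactly $\set{F'\cup\sigma}{F'\text{ a facet of }\Delta'}$, and each facet of $\Delta'$ arises once this way. Passing to the subsequence of $H_1,\dots,H_m$ consisting of those $H_k$ of this form and projecting off $\sigma$ gives an ordering of all facets of $\Delta'$. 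The shelling condition for this projected order is inherited from the join: given two facets in the subsequence, the witnessing vertex $x$ from the join shelling satisfies $x\notin\sigma$ and hence $x\in V'$, and the witnessing earlier facet $H_\ell$ must contain $\sigma$, so by maximality of $\sigma$ it is again of the form $F'\cup\sigma$ and therefore already belongs to the subsequence. Projecting yields precisely the required vertex and earlier facet for $\Delta'$.

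I expect the delicate point to be exactly this last step of the converse: one must be certain that the facet $H_\ell$ witnessing a shelling move in the join cannot escape the chosen subsequence. This is what forces $\sigma$ to be a \emph{facet} of $\Delta''$ rather than an arbitrary face, since maximality of $\sigma$ is exactly what pins the $\Delta''$-coordinate of every join facet containing $\sigma$ to be $\sigma$ itself. (Equivalently, one could phrase the converse as the identification $\Delta'\cong\operatorname{lk}_{\Delta'*\Delta''}(\sigma)$ together with the theorem that links of shellable complexes are shellable, but the direct subsequence argument keeps the proof self-contained.) Finally, no step invokes purity, so the argument applies verbatim in the non-pure setting; in the pure case one may additionally record $\dim(\Delta'*\Delta'')=\dim\Delta'+\dim\Delta''+1$, which shows that purity of both factors is equivalent to purity of the join.
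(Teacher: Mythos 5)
Your argument is correct. The paper does not prove this Fact at all---it is stated with a citation to Bj\"orner--Wachs \cite{shellable_join}---so there is no internal proof to compare against; what you have written is a sound, self-contained version of the standard argument. Both directions check out: the lexicographic ordering of facet pairs works because the disjointness of $V'$ and $V''$ makes the facet decomposition $F=(F\cap V')\cup(F\cap V'')$ unique and coordinatewise, and in the converse you correctly identify the one delicate point, namely that maximality of the chosen facet $\sigma$ of $\Delta''$ forces every join facet containing $\sigma$ (in particular the witnessing facet $H_\ell$) to stay inside the chosen subsequence. Note that the paper's Definition of shellability is stated only for pure complexes, so only the pure case of the Fact is ever invoked; your proof covers that case and the non-pure generalization alike.
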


Hence, by Fact~\ref{fact. D'*D'' is shell <->  D',D'' are shell}, if the stable complex $\calS(T)$ of an WTD tree $T$ can be expressed as the join of two simplicial complexes, $\Delta' * \Delta''$, we only need to show that $\Delta'$ and $\Delta''$ are shellable to prove that $\calS(T)$ is shellable.

\begin{definition}\label{def. even, odd, odd TD set}
    Let $T = (V,E)$ be a balanced tree.
    We denote the set of \emph{even height} vertices in $T$ by
    $$
    V_{even}(T) := \Set{v \in V}{\hh(v)\mbox{ is even}}.
    $$
    Similarly, we denote the set of \emph{odd height} vertices in $T$ by
    $$
    V_{odd}(T) := \Set{v \in V}{\hh(v)\mbox{ is odd}}.
    $$
    For $D \subseteq V$, we say $D$ is an \emph{odd-TDS} if $D$ is an $V_{odd}$-TDS in $T$; see Definition~\ref{def. S-TDS, SONI}.
\end{definition}

The following fact is a consequence of Definition~\ref{def. even, odd, odd TD set} and the properties of the interior graphs.

\begin{fact}\label{fact. odd-TDS is RD-set of TB}
   Let $T = (V,E)$ be a tree and let $T_\B$ and $T_\R$ be its interior graphs.
   Then
   $$
V_{even}(T_\B) = V_\B(T_\B) \mbox{ and } V_{odd}(T_\B) = V_\R(T_\B). 
$$
Similarly, we have
$$
V_{even}(T_\R) = V_\R(T_\R) \mbox{ and } V_{odd}(T_\R) = V_\B(T_\R). 
$$
\end{fact}

\begin{definition}\label{def. even stable complex of delta trees}
    Let $T = (V,E)$ be a balanced tree.
    The \emph{even-stable complex} of $T$ is the simplicial complex on $V_{even}(T)$:
    $$
    \calS_{even}(T) := \gen{V_{even}(T)\setminus D\mid D \text{ is a minimal odd-TDS}}.
    $$
\end{definition}

The following lemma is used to prove the main theorem of this subsection.

\begin{lemma}\label{lem. vert. partitioning of T}
    Let $T$ be a tree with a 2-coloring $\chi$. Then $V(T)$ can be partitioned as
    $$
    V(T) = V_{even}(T_\B) \cup V_{even}(T_\R) \cup V_1(T).
    $$
\end{lemma}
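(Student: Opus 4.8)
The plan is to reduce the three-way decomposition to an explicit description of the two interior sets and then verify disjointness and exhaustiveness directly. First I would invoke Fact~\ref{fact. odd-TD-set is RD-set of TB} to replace the even-height sets by color classes, rewriting the target identity as $V(T) = V_\B(T_\B)\cup V_\R(T_\R)\cup V_1(T)$, where $V_\B(T_\B)$ and $V_\R(T_\R)$ denote the blue vertices of $T_\B$ and the red vertices of $T_\R$, respectively (colors inherited from $\chi$). This turns a statement about heights computed inside the interior graphs into one about the fixed coloring of $T$, which is much easier to track.

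The core computation is to show that $V_\B(T_\B)=\chi^{-1}(\B)\setminus V_1(T)$ and, symmetrically, $V_\R(T_\R)=\chi^{-1}(\R)\setminus V_1(T)$. Writing $B_1 := V_1(T)\cap\chi^{-1}(\B)$ for the blue support vertices, by Definition~\ref{def. interior graphs} we have $V(T_\B)=V\setminus N[B_1]$, so $V_\B(T_\B)=\chi^{-1}(\B)\setminus\bigl(N[B_1]\cap\chi^{-1}(\B)\bigr)$. Here I would use that $\chi$ is a proper $2$-coloring: every neighbor of a blue vertex is red, so $N(B_1)\subseteq\chi^{-1}(\R)$ and hence $N[B_1]\cap\chi^{-1}(\B)=B_1$. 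This yields $V_\B(T_\B)=\chi^{-1}(\B)\setminus B_1=\chi^{-1}(\B)\setminus V_1(T)$, the last equality because $B_1=\chi^{-1}(\B)\cap V_1(T)$. The red case is identical with the roles of red and blue exchanged.

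With these two identities in hand, the conclusion is a set-theoretic bookkeeping. For exhaustiveness, $(\chi^{-1}(\B)\setminus V_1(T))\cup(\chi^{-1}(\R)\setminus V_1(T))=V\setminus V_1(T)$ since $\chi^{-1}(\B)\cup\chi^{-1}(\R)=V$, and adjoining $V_1(T)$ recovers all of $V$. For disjointness, the two interior pieces lie in opposite color classes and are therefore disjoint, while each of them explicitly excludes $V_1(T)$; hence all three sets are pairwise disjoint.

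I do not expect a genuine obstacle here; the only point that demands care is the color bookkeeping in the middle step, namely the observation that deleting the closed neighborhood of the monochromatic support set $B_1$ removes from the blue class exactly $B_1$ itself and nothing more, which rests entirely on the properness of $\chi$. I would also check the harmless boundary behavior of degenerate inputs such as $P_2$ or a single vertex, where $V_1(T)=\emptyset$ and the identity degenerates to $V=\chi^{-1}(\B)\cup\chi^{-1}(\R)$, to confirm that the argument needs no separate treatment of these cases.
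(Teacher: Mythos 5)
Your proof is correct and rests on the same two observations as the paper's own argument: that the even-height vertices of $T_\B$ (resp.\ $T_\R$) are exactly its blue (resp.\ red) vertices, and that properness of $\chi$ forces $N[V_1(T)\cap\chi^{-1}(\B)]\cap\chi^{-1}(\B)=V_1(T)\cap\chi^{-1}(\B)$, i.e., the only blue vertices deleted in forming $T_\B$ are the blue support vertices. The paper organizes this as an element-by-element case analysis (is $v$ blue or red, in $V(T_\B)$ or not), whereas you package it as the closed-form identity $V_{even}(T_\B)=\chi^{-1}(\B)\setminus V_1(T)$ and then do set algebra, but the mathematical content is essentially identical.
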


\begin{proof}
The containment $V(T) \supseteq V_{even}(T_\B) \cup V_{even}(T_\R) \cup V_1(T)$ is clear by definition, so we only need to show the other containment.
Let $v \in V(T)$.
By symmetry, assume that $v$ is blue.
We consider two cases: $v \in V(T_\B)$ or $v \not\in V(T_\B)$.
First, suppose that $v \in V(T_\B)$.
Then by Corollary~\ref{cor. same height parity -> same color} and the construction of $V_\B$ (every leaf is blue color), $v \in V_{even}(T_\B)$.
Next, suppose that $v \not\in V(T_\B)$.
Then by the construction of $T_\B$, $v$ must be a blue support vertex in $T$ since the blue vertices in $T$ that are not in $T_\B$ are the blue support vertices of $T$. 
Thus $v \in V_1(T)$.
Thus we get $V(T) \subseteq V_{even}(T_\B) \cup V_{even}(T_\R) \cup V_1(T)$.

Next, we show that the sets $V_{even}(T_\B)$, $V_{even}(T_\R)$, and $V_1(T)$ are pairwise disjoint.
By Corollary~\ref{cor. same height parity -> same color}, vertices in $V_{even}(T_\B)$ are blue and vertices in $V_{even}(T_\R)$ are red, giving us $V_{even}(T_\B) \cap V_{even}(T_\R) = \emptyset$.
By symmetry, it suffices to show that $V_1(T) \cap V_{even}(T_\B) = \emptyset$.
Let $v \in V_1(T)$.
If $\chi(v) = \B$, then $v \not\in V(T_\B)$ by the construction of $T_\B$, and thus $v \not\in V_{even}(T_\B)$.
If $\chi(v) = \R$, then $v \not\in V_{even}(T_\B)$ since vertices in $V_{even}(T_\B)$ are blue.
\end{proof}

\begin{theorem}\label{thm. stable comp. is join of even complexes}
    Let $T = (V,E)$ be a tree.
    Then
    $\calS(T) = \calS_{even}(T_\B) * \calS_{even}(T_\R)$.
\end{theorem}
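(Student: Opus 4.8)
The plan is to prove the set equality $\calS(T) = \calS_{even}(T_\B) * \calS_{even}(T_\R)$ by showing that the facets of the two complexes coincide, and then invoking the fact that a simplicial complex is determined by its facets. By Definition~\ref{def. stable complex}, the facets of $\calS(T)$ are exactly the sets $V(T) \setminus D$ where $D$ ranges over the minimal TD-sets of $T$. On the other side, by Definition~\ref{def. join of simp comp}, the facets of $\calS_{even}(T_\B) * \calS_{even}(T_\R)$ are the sets $F_\B \cup F_\R$ where $F_\B$ is a facet of $\calS_{even}(T_\B)$ and $F_\R$ is a facet of $\calS_{even}(T_\R)$; note the join is well-defined because $V_{even}(T_\B)$ and $V_{even}(T_\R)$ are disjoint by Lemma~\ref{lem. vert. partitioning of T}. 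So the goal reduces to setting up a bijection between minimal TD-sets $D$ of $T$ and pairs $(D_\B, D_\R)$ of minimal odd-TD-sets in $T_\B$ and $T_\R$, under which the complementary faces correspond correctly.

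\medskip

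The core of the argument is the partition $V(T) = V_{even}(T_\B) \cup V_{even}(T_\R) \cup V_1(T)$ from Lemma~\ref{lem. vert. partitioning of T}, together with the dictionary from Fact~\ref{fact. odd-TD-set is RD-set of TB} identifying odd-TD-sets in $T_\B$ with RD-sets in $T_\R$ (and odd-TD-sets in $T_\R$ with BD-sets in $T_\B$), wait — I should check the indexing carefully — Fact~\ref{fact. odd-TD-set is RD-set of TB} says odd-TD-sets in $T_\B$ and $T_\R$ correspond to RD-sets and BD-sets in $T_\R$ and $T_\B$ respectively. The essential input is Lemma~\ref{lem. RD-sets In TR <-> RD-sets In T } together with Fact~\ref{fact. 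TD-set iff RD-set U BD-set}. Concretely, I would start from a minimal TD-set $D$ of $T$. Using Fact~\ref{fact. TD-set iff RD-set U BD-set}, split $D$ into its blue part $D'' = D \cap V_\R$ (a minimal BD-set) and red part $D' = D \cap V_\B$ (a minimal RD-set). Then Lemma~\ref{lem. RD-sets In TR <-> RD-sets In T } writes each of these canonically as a forced piece inside $V_1(T)$ plus a minimal dominating set living in the appropriate interior graph: the minimal RD-set equals $(V_1 \cap V_\B) \cup D_\B$ with $D_\B$ a minimal RD-set of $T_\B$, and the minimal BD-set equals $(V_1 \cap V_\R) \cup D_\R$ with $D_\R$ a minimal BD-set of $T_\R$.

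\medskip

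Now I translate back into even-stable language. Since $V_1(T) \subseteq D$ is forced (the two pieces $V_1 \cap V_\B$ and $V_1 \cap V_\R$ together exhaust $V_1(T)$), the complement $V(T) \setminus D$ contains no vertex of $V_1(T)$; by the partition from Lemma~\ref{lem. vert. partitioning of T} it lives entirely in $V_{even}(T_\B) \cup V_{even}(T_\R)$, and it splits as
$$
V(T) \setminus D = \bigl(V_{even}(T_\B) \setminus D_\B\bigr) \cup \bigl(V_{even}(T_\R) \setminus D_\R\bigr).
$$
Here I use Fact~\ref{fact. odd-TD-set is RD-set of TB} to reinterpret the minimal RD-set $D_\B$ of $T_\B$ as a minimal odd-TD-set of $T_\B$, so that $V_{even}(T_\B) \setminus D_\B$ is exactly a facet of $\calS_{even}(T_\B)$ by Definition~\ref{def. even stable complex of delta trees}, and symmetrically for the red side. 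This shows every facet of $\calS(T)$ is a facet of the join. The reverse inclusion runs the same chain of equivalences backwards: starting from a pair of minimal odd-TD-sets, reassemble via Lemma~\ref{lem. RD-sets In TR <-> RD-sets In T } to obtain a minimal RD-set and a minimal BD-set of $T$, then via Fact~\ref{fact. TD-set iff RD-set U BD-set}(2) combine them into a minimal TD-set of $T$. Since every bijection step is an ``if and only if,'' both inclusions follow simultaneously.

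\medskip

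I expect the main obstacle to be purely bookkeeping rather than conceptual: one must verify that the forced intersection with $V_1(T)$ correctly accounts for both color classes simultaneously, so that the two complementary pieces are genuinely disjoint and their union omits all of $V_1(T)$. The disjointness is guaranteed by Lemma~\ref{lem. vert. partitioning of T}, and the fact that $V_1(T)$ is entirely absorbed into $D$ follows from the two invocations of Lemma~\ref{lem. RD-sets In TR <-> RD-sets In T } (one forcing $V_1 \cap V_\B \subseteq D'$ and one forcing $V_1 \cap V_\R \subseteq D''$). The only subtlety worth stating explicitly is that the correspondence of facets determines the whole complex, since both sides are generated by their facets; this lets me avoid checking the lower-dimensional faces by hand. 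Once the facet bijection is established, equality of the two complexes is immediate.
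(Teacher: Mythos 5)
Your proposal is correct and follows essentially the same route as the paper's proof: split a minimal TD-set into its RD- and BD-parts via Fact~\ref{fact. TD-set iff RD-set U BD-set}, peel off the forced $V_1(T)$ pieces using Lemma~\ref{lem. RD-sets In TR <-> RD-sets In T } and Fact~\ref{fact. odd-TD-set is RD-set of TB}, and compute the complementary facet with the partition from Lemma~\ref{lem. vert. partitioning of T}. The paper likewise argues at the level of facets and treats the reverse inclusion as symmetric, so there is no substantive difference.
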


\begin{proof}
We will only show that $\mathcal{F}(\calS(T)) \subseteq \mathcal{F}(\calS_{even}(T_\B) * \calS_{even}(T_\R))$ (the reverse inclusion is proved similarly).
    Let $F \in \mathcal{F}(\calS(T))$ be a facet.
    Then there exists a minimal TDS $D \subseteq V(T)$ such that $F = V(T)\setminus D$.
    By Fact~\ref{fact. TDS iff RD-set U BD-set}, there exist an RD-set $D' \subseteq V_\B(T)$ and a BD-set $D'' \subseteq V_\R(T)$ in $T$ such that $D = D' \cup D''$.
    Then by Fact~\ref{fact. odd-TDS is RD-set of TB}, there are minimal odd-TDSs $D_1 \subseteq V_{even}(T_\B)$ in $T_\B$ and $D_2 \subseteq V_{even}(T_\R)$ in $T_\R$ such that
    $$
    D' = V_{1,\B} \cup D_1 \text{ and } D'' = V_{1,\R} \cup D_2
    $$
    where $V_{1,C} = V_1(T) \cap V_C(T)$ for all $C \in \set{\R,\B}$.
    Also, we have $V_1(T) = V_{1,\B} \cup V_{1,\R}$.
    Thus
    \begin{align*}
        F &= V(T)\setminus D\\
        &= V(T)\setminus (D' \cup D'')\\
        &= V(T) \setminus \left((V_{1,\B} \cup D_1) \cup (V_{1,\R} \cup D_2) \right)\\
        &= \left(V_{even}(T_\B) \cup V_{even}(T_\R) \right)\setminus (D_1 \cup D_2) \tag{Lemma~\ref{lem. vert. partitioning of T}}\\
        &= (V_{even}(T_\B)\setminus D_1) \cup (V_{even}(T_\R)\setminus D_2).
    \end{align*}
    By Definition~\ref{def. even stable complex of delta trees}, we have $V_{even}(T_\B)\setminus D_1 \in \calS_{even}(T_\B)$ and $V_{even}(T_\R)\setminus D_2 \in \calS_{even}(T_\R)$. 
\end{proof}

\subsection{Shellability of even stable complexes}\label{subsec. Shellability of even stable complexes}

In this subsection, we will prove that the even-stable complexes of WTD balanced trees are shellable; see Theorem~\ref{thm. even stable complex is shellable}.

\begin{definition}[\protect{\cite[Definition 5.1.11]{MR1251956}}]\label{def. shellability of simp comp}
    Let $\Delta$ be a pure simplicial complex.
    An ordering $F_1,\dots,F_m$ of facets of $\Delta$ is a \emph{shelling} if it satisfies one of the following equivalent conditions:
    \begin{itemize}
        \item[(i)] For all $i$ such that $1 < i \leq m$, the simplicial complex $\gen{F_1,\dots,F_{i-1}} \cap \gen{F_i}$ is pure of dimension $\dim(\Delta) - 1$.
        
        \item[(ii)] for all $i,j$ such that $1 \leq i < j \leq m$, there exist a vertex $v \in F_j \setminus F_i$ and an index $k \in [j-1]$ satisfying $F_j\setminus F_k = \set{v}$.
    \end{itemize}
    We say $\Delta$ is \emph{shellable} if $\Delta$ has a shelling order.
\end{definition}

The following equivalent definition of shellability can be proven using Definition~\ref{def. shellability of simp comp}.

\begin{fact}\label{fact. another def of shelling}
    Let $\Delta$ be a pure simplicial complex. Then $\Delta$ is shellable if and only if there is an ordering of facets $F_1,\dots,F_m$ such that for all $1 \leq i < j \leq m$, if $|F_i \cap F_j| < \dim(\Delta)$, then there exists $1 \leq k < j$ such that $F_i \cap F_j \subseteq F_k \cap F_j$ and $|F_k \cap F_j| = \dim(\Delta)$.
\end{fact}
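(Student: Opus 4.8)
The plan is to show that a fixed ordering $F_1,\dots,F_m$ of the facets of $\Delta$ satisfies the condition displayed in Fact~\ref{fact. another def of shelling} if and only if it satisfies condition (ii) of Definition~\ref{def. shellability of simp comp}; the equivalence of the two \emph{existence} statements (hence of shellability with the displayed condition) then follows immediately. Throughout I would write $d := \dim(\Delta)$, so that purity gives $|F_i| = d+1$ for every facet $F_i$, and I would note that two distinct facets of a pure complex are incomparable, so $|F_i \cap F_j| \leq d$ whenever $i \neq j$.

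The argument rests on two elementary translations that I would record first. Since any two distinct facets satisfy $|F_i| = |F_j| = d+1$, the inequality $|F_i \cap F_j| < d$ is equivalent to $|F_j \setminus F_i| \geq 2$, while $|F_k \cap F_j| = d$ is equivalent to $F_j \setminus F_k$ being a single vertex; moreover $F_i \cap F_j \subseteq F_k \cap F_j$ is equivalent to $F_i \cap F_j \subseteq F_k$, since $F_i \cap F_j$ already lies in $F_j$. With these in hand, the condition in the Fact reads: whenever $i < j$ and $|F_j \setminus F_i| \geq 2$, there is some $k < j$ with $F_i \cap F_j \subseteq F_k$ and $F_j \setminus F_k = \set{v}$ for a single vertex $v$.

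For the forward direction (assuming (ii)), I would fix $i < j$ with $|F_i \cap F_j| < d$, apply (ii) to obtain a vertex $v \in F_j \setminus F_i$ and an index $k < j$ with $F_j \setminus F_k = \set{v}$, and then observe that $v \notin F_i$ forces $F_i \cap F_j \subseteq F_j \setminus \set{v} = F_k \cap F_j$, which together with $|F_k \cap F_j| = d$ is exactly the conclusion required by the Fact. For the reverse direction (assuming the Fact's condition), I would fix $i < j$ and split into two cases, which are exhaustive because $|F_i \cap F_j| \leq d$. If $|F_i \cap F_j| = d$, then $F_j \setminus F_i$ is a single vertex $v$ and I can simply take $k = i < j$. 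If $|F_i \cap F_j| < d$, the Fact supplies $k < j$ with $F_i \cap F_j \subseteq F_k$ and $F_j \setminus F_k = \set{v}$; the vertex $v$ then lies in $F_j \setminus F_i$, since $v \in F_i$ would give $v \in F_i \cap F_j \subseteq F_k$, contradicting $v \notin F_k$. This yields (ii).

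There is no serious obstacle here: the whole proof is careful bookkeeping of set differences via purity. The only point that genuinely requires attention is the separate treatment of the case $|F_i \cap F_j| = d$ (equivalently $|F_j \setminus F_i| = 1$) in the reverse direction, which the Fact's hypothesis explicitly excludes but which condition (ii) must nonetheless cover; there the choice $k = i$ settles it at once.
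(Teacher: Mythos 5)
Your proposal is correct and is exactly the verification the paper has in mind: the paper states this fact without proof, remarking only that it ``can be proven using Definition~\ref{def. shellability of simp comp},'' and your translation between $|F_i\cap F_j|<\dim(\Delta)$ and $|F_j\setminus F_i|\ge 2$ via purity, together with the separate handling of the case $|F_i\cap F_j|=\dim(\Delta)$ by taking $k=i$, is precisely the intended bookkeeping. No gaps.
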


Throughout, we will use Fact~\ref{fact. another def of shelling} to show that a simplicial complex is shellable.
We also use the following well-known result about shellable simplicial complexes; see \cite{joinIsShellable, MR2853065}.

\begin{fact}\label{fact. k-skeleton of simplex is shellable}
    Let $|V| = n$ and $1 \leq k \leq n$.
    Then the simplicial complex $\gen{S \subseteq V \mid |S| = k}$ is shellable.
\end{fact}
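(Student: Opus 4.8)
The plan is to exhibit an explicit shelling order and verify the criterion in Fact~\ref{fact. another def of shelling}. Write $\Delta := \gen{S \subseteq V \mid |S| = k}$. Every facet of $\Delta$ is a $k$-element subset of $V$, so $\Delta$ is pure of dimension $k-1$. Fix a total order on $V$, say $V = \set{1,\dots,n}$ with its natural order, and identify each facet with the increasing sequence of its elements. I would order the facets $F_1,\dots,F_m$ (with $m = \binom{n}{k}$) by the lexicographic order on these sequences, writing $F <_{\mathrm{lex}} F'$ for this order.

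First I would record the key combinatorial lemma describing lexicographic order through symmetric differences: for distinct facets $F, F'$, one has $F <_{\mathrm{lex}} F'$ if and only if $\min(F \triangle F') \in F$. The proof is routine: all elements of $V$ below $e := \min(F \triangle F')$ lie in both or neither of $F, F'$, so the two increasing sequences agree up to the position where $e$ first appears, and there the set containing $e$ carries the smaller entry.

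Next, to verify Fact~\ref{fact. another def of shelling}, I would fix $i < j$ with $|F_i \cap F_j| < k-1$; in particular $F_i \neq F_j$ and $F_i <_{\mathrm{lex}} F_j$. Set $w := \min(F_i \setminus F_j)$ and $z := \min(F_j \setminus F_i)$, both of which exist. By the lemma, $\min(F_i \triangle F_j) \in F_i$, so $w = \min(F_i \triangle F_j) < z$. I then propose the witness facet $F' := (F_j \setminus \set{z}) \cup \set{w}$ and claim it does the job. Indeed $|F' \cap F_j| = k-1$ since $z \in F_j$ and $w \notin F_j$; replacing the element $z$ of $F_j$ by the strictly smaller element $w \notin F_j$ produces a lexicographically smaller increasing sequence, so $F' <_{\mathrm{lex}} F_j$ and hence $F'$ precedes $F_j$ in the ordering; and since $z \notin F_i$ we have $F_i \cap F_j \subseteq F_j \setminus \set{z} \subseteq F'$, giving $F_i \cap F_j \subseteq F' \cap F_j$. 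This is exactly the hypothesis of Fact~\ref{fact. another def of shelling}, so the lexicographic order is a shelling and $\Delta$ is shellable.

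The main obstacle, and the only point requiring care, is arranging a single witness $F'$ that simultaneously lies earlier in the order, differs from $F_j$ in exactly one element, and contains $F_i \cap F_j$. Choosing $z$ to be the smallest element of $F_j \setminus F_i$ guarantees $z \notin F_i$ and hence the containment, while the symmetric-difference lemma is precisely what forces $w < z$ and therefore the earlier-in-order property. A naive choice---for instance swapping at the first position where the sorted sequences of $F_i$ and $F_j$ disagree---can fail the containment, because the element of $F_j$ deleted there may still belong to $F_i$; passing to the minima of the two set differences is what repairs this.
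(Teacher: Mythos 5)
Your proof is correct. Note, however, that the paper does not prove this statement at all: it is recorded as a well-known fact with a pointer to Provan--Billera and Woodroofe, where it follows from general machinery (skeleta of shellable complexes are shellable, or vertex decomposability of the simplex). What you supply instead is a short, self-contained argument: the lexicographic order on $k$-subsets is an explicit shelling, verified directly against Fact~\ref{fact. another def of shelling}. The two key points --- the characterization $F <_{\mathrm{lex}} F'$ iff $\min(F \mathbin{\triangle} F') \in F$, and the choice of witness $F' = (F_j \setminus \set{z}) \cup \set{w}$ with $w = \min(F_i \setminus F_j)$ and $z = \min(F_j \setminus F_i)$ --- both check out: $w < z$ forces $F' <_{\mathrm{lex}} F_j$, the single swap gives $|F' \cap F_j| = k-1$, and $z \notin F_i$ gives the containment $F_i \cap F_j \subseteq F_j \setminus \set{z} = F' \cap F_j$. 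Your closing remark about why one must swap at the minima of the set differences rather than at the first disagreement of the sorted sequences is a genuine subtlety and is handled correctly. What your approach buys is independence from the cited literature and an explicit shelling order in the same style as the one the paper constructs in Theorem~\ref{thm. even stable complex is shellable}; the only cost is a page of routine verification that the citation avoids. One cosmetic point: you reuse the letter $k$ both for the cardinality of the faces and (implicitly, via Fact~\ref{fact. another def of shelling}) for the index of the witness facet; rename one of them if you write this up.
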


We first consider balanced trees of heights 0 and 1.

\begin{lemma}\label{lem. h(T) = 0,1 -> WTD and shellable}
    Let $T$ be a balanced tree with $\hh(T) \leq 1$.
    Then $\calS_{even}(T)$ is shellable.    
\end{lemma}

\begin{proof}
    The proof of Theorem~\ref{thm. char. of WTD delt. trees} implies that $T$ is WTD, and hence $\calS_{even}(T)$ is pure.
    If $\hh(T) = 0$, then $T$ is a graph with a single vertex, say $v$.
    Hence $\emptyset$ is its only TDS by assumption, so $\calS_{even}(T) = \gen{\set{v}}$, which is shellable.
    Suppose that $\hh(T) = 1$.
    Then $T$ is a tree with a unique support vertex $s$ and leaves $\set{l_1,\dots,l_k}$ for some $k > 1$.
    In particular, every minimal odd-TDS of $T$ is of the form $\set{l_i}$.
    Thus we get
    $$
    \calF(\calS_{even}(T)) = \Set{V_0\setminus \set{l_i}}{1 \leq i \leq k} = \Set{V' \subseteq V_0}{|V'| = k-1}.
    $$
    By the last equality above, $\calS_{even}(T)$ is shellable by Fact~\ref{fact. k-skeleton of simplex is shellable}. 
\end{proof}

Next, we consider WTD balanced trees of height 3.
We use the following convention for the labeling of vertices in WTD balanced trees of height 3.

\begin{assumptions}
    Let $T = (V,E)$ be a height-3 WTD balanced tree.
    Throughout this subsection, let $p := |V_1|$, $V_1 := \set{s_1,\dots,s_{p}}$. 
    We set $V_2 := \set{u_{1,1},\dots,u_{p,1}}$, where $s_iu_{i,1} \in E$ for all $i \in [p]$. 
    (Note that $p = |V_1| =|V_2|$ by Theorem~\ref{thm. char. of WTD trees}.)
    For all $i\in[p]$, set $k_i:= |N(s_i)|$ and write $\set{u_{i,2},\dots,u_{i,k_i}}:= N(s_i) \cap V_0$.
\end{assumptions}

\begin{example}\label{ex. labeling for an WTD delt tree of h = 3}
    Figure~\ref{fig: sec 3_2 labeling example} demonstrates a vertex labeling based on the assumptions.     
    \begin{figure}[ht]
        \centering
        \includegraphics{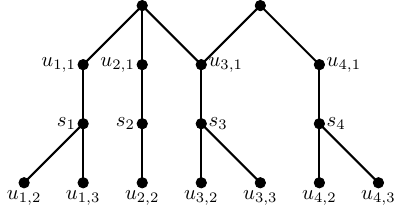}
        \caption{Vertex labeling of a height 3 WTD balanced tree}
        \label{fig: sec 3_2 labeling example}
    \end{figure}
    In this example, $p = |V_1| = |V_2| = 4$, $k_1 = |N(s_1)| = 3$, $k_2 = |N(s_2)| = 2$, $k_3 = 3$, and $k_4 = 3$.
\end{example}

To define an ordering on $\calF(\calS_{even}(T))$, we will enumerate the facets using integer vectors.
We begin with a lemma which is essentially proved in \cite[Theorem 4.11]{COURAGE_GT}.

\begin{lemma}\label{lem. |N(s) cap D| = 1}
    Let $T$ be an WTD balanced tree of height 3, and let $D$ be a minimal odd-TDS.
    Let $s \in V$ be a support vertex.
    Then $|N(s) \cap D| = 1$.
    Therefore, every minimal odd-TDS of $T$ is of the form
    $\set{u_{1,a_1},u_{2,a_2},\dots,u_{p,a_{p}}}$
where $1 \leq a_i \leq k_i$.
\end{lemma}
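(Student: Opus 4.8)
The plan is to prove the equality $|N(s)\cap D|=1$ by bounding the intersection on both sides and then to read off the structural description from the partition of $V_{even}(T)$ induced by the support vertices. As a preliminary step I would record two facts about a minimal odd-TD-set $D$. First, $D\subseteq V_{even}(T)=V_0\cup V_2$: if some $d\in D$ had odd height, then $N(d)\subseteq V_{even}(T)$ (adjacent vertices in a balanced tree differ in height by $1$ by Lemma~\ref{lem. adjacent, then height differ by 1}), so $d$ dominates no vertex of $V_{odd}(T)$ and $D\setminus\set{d}$ is still an odd-TD-set, contradicting minimality (Definition~\ref{def. S-TD-set, SONI}). Second, every support vertex has exactly one neighbor of height $2$ and all of its remaining neighbors are leaves; this is condition (3) of Theorem~\ref{thm. char. of unmixed delt. trees}, which becomes the equality $|N(s)\cap V_2|=1$ for a height-$3$ unmixed balanced tree, as remarked after that theorem.

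The lower bound $|N(s)\cap D|\geq 1$ is immediate: since $s\in V_1\subseteq V_{odd}(T)$ and $D$ is an odd-TD-set, some $d\in D$ satisfies $s\in N(d)$, i.e.\ $d\in N(s)\cap D$. The upper bound $|N(s)\cap D|\leq 1$ is the crux, and it is a redundancy argument in the spirit of Lemma~\ref{lem. singleLeaf in a min TD set}. Suppose toward a contradiction that $N(s)\cap D$ contains two distinct vertices $x,y$. Because $s$ has a unique height-$2$ neighbor, at least one of $x,y$ is a leaf; say $y$ is a leaf, so $N(y)=\set{s}$. The only vertex of $V_{odd}(T)$ that $y$ dominates is $s$, and $s$ is still dominated by $x\in N(s)\cap(D\setminus\set{y})$. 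Hence $D\setminus\set{y}$ remains an odd-TD-set, contradicting the minimality of $D$. Combining the two bounds gives $|N(s)\cap D|=1$.

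For the final assertion I would show that the neighborhoods $N(s_1),\dots,N(s_p)$ partition $V_{even}(T)$. Each leaf lies in $N(s_i)$ for its unique adjacent support vertex $s_i$, and each height-$2$ vertex lies in $N(s_i)$ for exactly one $i$ by condition (2) of Theorem~\ref{thm. char. of unmixed delt. trees}; the sets are pairwise disjoint, since a common neighbor of $s_i$ and $s_j$ would be either a leaf (impossible, as leaves have a single neighbor) or a height-$2$ vertex adjacent to two support vertices (contradicting condition (2)). Coverage of $V_2$ uses the perfect matching between $V_1$ and $V_2$ established in the proof of Theorem~\ref{thm. h(T) = 3 and no P4 and |N(s) cup V2| = 2 -> unmimxed}. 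Since $D\subseteq V_{even}(T)=\bigsqcup_{i=1}^p N(s_i)$ and $|D\cap N(s_i)|=1$ for every $i$ by the first part, $D$ consists of exactly one vertex $u_{i,a_i}\in N(s_i)$ for each $i$, with $1\leq a_i\leq k_i$; that is, $D=\set{u_{1,a_1},\dots,u_{p,a_{p}}}$.

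The main obstacle is the upper-bound step: one must recognize that a minimal odd-TD-set cannot ``waste'' a leaf neighbor of $s$ once another neighbor of $s$ is already present, which is precisely the balanced, height-$3$ analogue of Lemma~\ref{lem. singleLeaf in a min TD set}. The remaining bookkeeping, namely that $D$ avoids odd-height vertices and that the $N(s_i)$ tile $V_{even}(T)$, is routine given the characterization in Theorem~\ref{thm. char. of unmixed delt. trees} and the perfect matching.
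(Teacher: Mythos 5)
Your proof is correct and follows essentially the same route as the paper, which does not give a separate proof but points to the argument inside Theorem~\ref{thm. h(T) = 3 and no P4 and |N(s) cup V2| = 2 -> unmimxed}: there, minimality forces $|D\cap N(s)|\leq 1$ because a leaf neighbor of $s$ dominates only $s$, and $D\subseteq V_0\cup V_2=\bigsqcup_{t\in V_1}N(t)$ yields the structural description. Your write-up just makes explicit the two bookkeeping points the paper leaves implicit (that a minimal odd-TD-set avoids odd-height vertices, and the case of several leaves per support vertex rather than the one-leaf normalization used there).
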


The following corollary is not used until Section~\ref{sec241204d}.

\begin{corollary}\label{cor. TDS = V_1 U odd TDS}
    Let $T = (V,E)$ be an WTD balanced tree of height 3.
    Then every minimal TDS of $T$ is a union of $V_1$ and a minimal odd TDS of $T$.
    Therefore by Lemma~\ref{lem. |N(s) cap D| = 1}, every minimal TDS of $T$ is of the form
    $$
    V_1 \cup \set{u_{1,a_1},u_{2,a_2},\dots,u_{p,a_{p}}}.
    $$
\end{corollary}

\begin{proof}
    Use \cite[Lemma 5.7]{COURAGE_GT}
    and Fact~\ref{fact. odd-TDS is RD-set of TB}.
\end{proof}

\begin{definition}\label{def. bijection between facets and vectors}
    Let $T$ be an WTD balanced tree of height 3.
    Define
    $$
    P_T := \Set{(a_1,\dots,a_{p})}{\text{for all } i \in [p], 1 \leq a_i \leq k_i}
    $$
    and     
    $$  
    U_T  := \Set{V_{even} \setminus \set{u_{1,a_1},\dots,u_{p,a_{p}}}}{1 \leq a_i \leq k_i}.
    $$
    Define a bijection $\nu_T: U_T  \to P_T$ by
    $$
    \nu_T(V_{even}\setminus \set{u_{1,a_1},\dots,u_{p,a_p}}) = (a_1,\dots,a_p).
    $$
    For $(a_1,\dots,a_p) \in P_T$, define
    $$
    \ell(a_1,\dots,a_p) = |\Set{i}{a_i = 1, 1 \leq i \leq p}|.
    $$
    Define a total order $<_P$ on $P_T$ as follows:
    for $\alpha,\beta \in P_T$, $\alpha <_P \beta$
    if and only if
    \begin{enumerate}
        \item $\ell(\alpha) > \ell(\beta)$, or
        \item $\ell(\alpha) = \ell(\beta)$ and $\alpha <_{lex} \beta$
    \end{enumerate}
    where $<_{lex}$ is the lexicographic order; $\alpha <_{lex} \beta$ if and only if $a_1 < b_1$ or there is an index $1 < i \leq p$ such that  $a_j = b_j$ for all $j \in [i-1]$ and $a_i < b_i$.
    Define a total order $<_T$ on $U_T $ as follows: for all $F_1,F_2 \in U_T $, 
    $F_1 <_T F_2 \text{ if and only if } \nu_T(F_1) <_P \nu_T(F_2)$.
\end{definition}

\begin{example}\label{ex. D1 D2 F1 F2 example}
Consider the sets
$$
D_1 = \set{u_{1,3},u_{2,1},u_{3,2},u_{4,1}}\ \ \  \mbox{ and }\ \ \  D_2 = \set{u_{1,1},u_{2,2},u_{3,1},u_{4,2}}
$$
in Figure~\ref{fig: two minimal odd-TDSs}.
\begin{figure}[ht]
\centering
\includegraphics{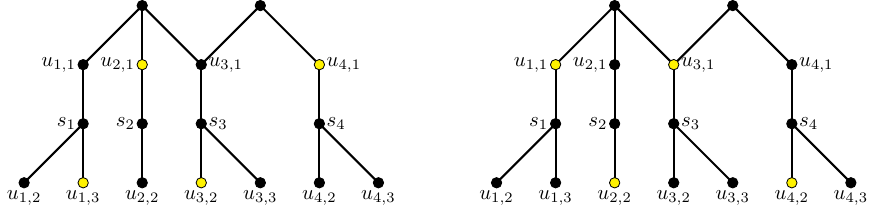}
\caption{Two minimal odd-TDSs $D_1$ (left) and $D_2$ (right)}
\label{fig: two minimal odd-TDSs}
\end{figure}
Call this tree $T$, and set $F_i := V_{even}\setminus D_i$ for $i = 1,2$.
The sets $D_1$ and $D_2$ are minimal odd-TDSs in $T$, and we have
$$
\nu_T(F_1) = (3,1,2,1)\ \ \  \mbox{ and }\ \ \  \nu_T(F_2) = (1,2,1,2).
$$
Since $\ell(1,2,1,2) = 2 = \ell(3,1,2,1)$ and $(1,2,1,2) <_{lex} (3,1,2,1)$, we get $F_2 <_T F_1$.
\end{example}

We give some useful lemmas that will be used in the proof of Theorem~\ref{thm. even stable complex is shellable}.

\begin{lemma}\label{lem. F1 cap F2 size}
    Let $T$ be an WTD balanced tree of height 3.
    Let $F_1, F_2 \in \mathcal{F}(\calS_{even}(T))$ with $\nu_T(F_1) = (a_1,\dots,a_p)$ and $\nu_T(F_2) = (b_1,\dots,b_p)$.
    Then
    $$
    |F_1 \cap F_2| = \dim(\calS_{even}(T)) + 1 - |\Set{i}{a_i \neq b_i, 1\leq i \leq p}|.
    $$
\end{lemma}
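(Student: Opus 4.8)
The plan is to reduce the identity to a direct inclusion–exclusion count exploiting the block structure of $V_{even}(T)$ furnished by the standing labeling conventions. First I would record that, since $T$ is a balanced tree of height $3$, its even-height vertices are precisely $V_{even}(T) = V_0 \cup V_2$, and by the labeling these partition into the $p$ blocks $N(s_i) = \set{u_{i,1},u_{i,2},\dots,u_{i,k_i}}$ for $i \in [p]$, where $u_{i,1} \in V_2$ and $u_{i,2},\dots,u_{i,k_i}$ are the leaves adjacent to $s_i$. The essential structural input is that these blocks are pairwise disjoint: by condition (2) of Theorem~\ref{thm. char. of unmixed delt. trees} every vertex of $V_2$ has a unique support neighbor, and every leaf has a unique support neighbor, so no even-height vertex can lie in two distinct sets $N(s_i)$. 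Consequently $|V_{even}(T)| = \sum_{i=1}^p k_i$.

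Next I would invoke Lemma~\ref{lem. |N(s) cap D| = 1} to pin down the facets: each $F \in \mathcal{F}(\calS_{even}(T))$ has the form $F = V_{even}(T) \setminus \set{u_{1,a_1},\dots,u_{p,a_p}}$, i.e.\ it deletes exactly one vertex from each block. In particular every facet has cardinality $|V_{even}(T)| - p$, so $\calS_{even}(T)$ is pure with $\dim(\calS_{even}(T)) + 1 = |V_{even}(T)| - p$; this identifies the dimension term on the right-hand side of the claim.

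The computational core is then short. Writing $A := \set{u_{1,a_1},\dots,u_{p,a_p}}$ and $B := \set{u_{1,b_1},\dots,u_{p,b_p}}$ for the two deleted sets, I would use $F_1 \cap F_2 = V_{even}(T) \setminus (A \cup B)$ to get $|F_1 \cap F_2| = |V_{even}(T)| - |A \cup B|$. Since $A$ and $B$ each meet every block in exactly one vertex and the blocks are disjoint, a vertex $u_{i,a_i}$ belongs to $B$ exactly when $b_i = a_i$; hence $A \cap B = \Set{u_{i,a_i}}{a_i = b_i}$ has size $p - |\Set{i}{a_i \neq b_i}|$. Inclusion–exclusion then gives $|A \cup B| = 2p - |A \cap B| = p + |\Set{i}{a_i \neq b_i}|$, and substituting yields $|F_1 \cap F_2| = |V_{even}(T)| - p - |\Set{i}{a_i \neq b_i}|$. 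Combining this with the dimension computation from the previous step produces the asserted formula.

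I do not anticipate a genuine obstacle; the entire argument is bookkeeping once disjointness of the blocks $N(s_i)$ is established. The single point that deserves care is exactly that disjointness, since it is what makes $A \cap B$ depend only on the coordinatewise agreements $a_i = b_i$; this is where the hypotheses that $T$ is unmixed, balanced, and of height $3$ are actually used.
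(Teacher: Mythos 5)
Your proposal is correct and follows essentially the same route as the paper: both arguments pass to the complements $D_i = V_{even}\setminus F_i$ and apply inclusion--exclusion, using that the deleted sets meet each block $N(s_i)$ in exactly one vertex so that $|D_1\cap D_2|$ counts the coordinates with $a_i=b_i$. Your explicit justification of the pairwise disjointness of the blocks $N(s_i)$ is a point the paper's proof leaves implicit, but it does not change the substance of the argument.
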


\begin{proof}
    Set $I_0 := \Set{i}{a_i = b_i, 1\leq i \leq p}$ and $I_1 := \Set{i}{a_i \neq b_i, 1\leq i \leq p}$.
    Also, write $D_i = V_{even}\setminus F_i$ for $i = 1,2$.
    Then by definition of $\nu_T$, we get
    $$
    D_1 = \set{u_{1,a_1},\dots,u_{p,a_p}} \ \ \ \text{ and } D_2 = \set{u_{1,b_1},\dots,u_{p,b_p}}.
    $$
    Thus $|D_1 \cap D_2| = |I_0|$, and $|D_2| - |I_0| = |I_1|$.
    Hence we get
    \begin{align*}
        |F_1 \cap F_2| &= |(V_{even} \setminus D_1) \cap (V_{even}\setminus D_2)|\\
        &= |V_{even}\setminus (D_1 \cup D_2)|\\
        &= |V_{even}| - |D_1 \cup D_2| \tag{$D_1 \cup D_2 \subseteq V_{even}$}\\
        &= |V_{even}| - (|D_1| + |D_2| - |D_1 \cap D_2|)\\
        &= |V_{even}| - |D_1| - |D_2| + |I_0|\\
        &= |F_1| - |D_2| + |I_0| \tag{$|F_1| = |V_{even}| - |D_1|$}\\
        &= |F_1| - |I_1| \tag{$|D_2| - |I_0| = |I_1|$}\\
        &= \dim(\calS_{even}(T)) + 1 - |\Set{i}{a_i \neq b_i, 1\leq i \leq p}|.
    \end{align*}
\end{proof}

In particular, for two facets $F_1,F_2 \in \calF(\calS_{even}(T))$, $|F_1 \cap F_2| = \dim(\calS_{even}(T))$ if and only if $\nu_T(F_1)$ and $\nu_T(F_2)$ differ by exactly one entry.

\begin{lemma}\label{lem. non-zero to 1 is TDS}
    Let $T$ be an WTD balanced tree of height 3, and let 
    $$
    D = \set{u_{1,a_1},\dots,u_{p,a_p}} \subseteq V
    $$ 
    be a minimal odd-TDS in $T$.
    Suppose that there exists some index $i$ such that $a_i \neq 1$.
    Then for any $c \in [k_i]$, the set
    $D' = (D\setminus \set{u_{i,a_i}}) \cup \set{u_{i,c}}$
    is also a minimal odd-TDS.
\end{lemma}

\begin{proof}
    First, suppose that $c \neq 1$. 
    Then $u_{i,c}$ is a leaf adjacent to $s_i$.
    So, $D'$ is obtained by replacing the leaf $u_{i,a_i}$ by another leaf $u_{i,c}$ that is adjacent to the same support vertex $s_i$. 
    Thus $D'$ is also a minimal odd-TDS.
    Next, suppose that $c = 1$.
    Since $D$ is a minimal odd-TDS, we get $N(s_i) \cap D = \set{u_{i,a_i}}$ by Lemma~\ref{lem. |N(s) cap D| = 1}. 
    Hence we have $N(D\setminus \set{u_{i,a_i}}) = V_{odd}\setminus \set{s_i}$.
    Since $N(u_{i,1}) \supseteq \set{s_i}$, we get
    $$
    N(D') = N(D\setminus \set{u_{i,a_i}}) \cup N(u_{i,1}) = V_{odd}.
    $$ 
    To show that $D'$ is minimal, consider the function $\mathcal{D'}: D' \to N(D')$ given by 
    $$
    \mathcal{D'}(v) = \begin{cases}
        s_j & v = u_{j,a_j} \text{ where } j \neq i\\
        s_i & v = u_{i,1}
    \end{cases}.
    $$
    Then Lemma~\ref{lem. |N(s) cap D| = 1} implies that $\mathcal{D'}$ is a domination selector, therefore $D'$ is minimal.
\end{proof} 

We prove the main result of this subsection.

\begin{theorem}\label{thm. even stable complex is shellable}
    Let $T$ be an WTD balanced tree of height 3.
    Then the total order $<_T$ on $U_T$ restricted to $\mathcal{F}(\calS_{even}(T))$ is a shelling of $\calS_{even}(T)$.
\end{theorem}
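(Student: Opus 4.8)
The plan is to verify the shelling criterion of Fact~\ref{fact. another def of shelling}. First, since $T$ is unmixed, Lemma~\ref{lem. |N(s) cap D| = 1} shows that every minimal odd-TD-set has size $p$, so every facet of $\calS_{even}(T)$ has size $|V_{even}|-p$ and the complex is pure of dimension $d := |V_{even}|-p-1$. For a vector $\gamma=(c_1,\dots,c_p)$ I write $D(\gamma):=\set{u_{1,c_1},\dots,u_{p,c_p}}$, so that a facet of $\calS_{even}(T)$ is exactly $\nu_T^{-1}(\gamma)=V_{even}\setminus D(\gamma)$ for those $\gamma$ with $D(\gamma)$ a minimal odd-TD-set. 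Fix facets $F_1<_T F_2$ with $\alpha:=\nu_T(F_1)=(a_1,\dots,a_p)$ and $\beta:=\nu_T(F_2)=(b_1,\dots,b_p)$, so $\alpha<_P\beta$. By Lemma~\ref{lem. F1 cap F2 size}, the hypothesis $|F_1\cap F_2|<d$ is equivalent to $\alpha,\beta$ differing in at least two coordinates; set $S:=\Set{m}{a_m\neq b_m}$, so $|S|\geq 2$. The strategy is to build the required $F_k$ as $\nu_T^{-1}(\gamma)$, where $\gamma$ agrees with $\beta$ except at a single carefully chosen index $m_0\in S$, at which I reset $\gamma_{m_0}:=a_{m_0}$. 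For any such $\gamma$ one has $\gamma_m\in\set{a_m,b_m}$ for all $m$, so $D(\gamma)\subseteq D(\alpha)\cup D(\beta)$ and hence $F_1\cap F_2\subseteq F_k\cap F_2$; and $\gamma$ differs from $\beta$ in exactly one coordinate, so $|F_k\cap F_2|=d$ by Lemma~\ref{lem. F1 cap F2 size}. It therefore remains only to choose $m_0$ so that (a) $D(\gamma)$ is a minimal odd-TD-set, and (b) $\gamma<_P\beta$.

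To choose $m_0$ I would split on the definition of $<_P$. If $\ell(\alpha)>\ell(\beta)$, then since $\alpha$ and $\beta$ agree off $S$ I get $|\Set{m\in S}{a_m=1}|>|\Set{m\in S}{b_m=1}|$, so some $m_0\in S$ has $a_{m_0}=1$; as $m_0\in S$, also $b_{m_0}\neq 1$. Resetting $\gamma_{m_0}=1$ gives $\ell(\gamma)=\ell(\beta)+1$, whence $\gamma<_P\beta$. If instead $\ell(\alpha)=\ell(\beta)$ and $\alpha<_{lex}\beta$, I take $m_0$ to be the smallest index in $S$, so $a_{m_0}<b_{m_0}$ and in particular $b_{m_0}\geq 2$. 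If $a_{m_0}=1$ then $\ell(\gamma)>\ell(\beta)$; otherwise $\ell(\gamma)=\ell(\beta)$ while $\gamma$ and $\beta$ first differ at $m_0$ with $\gamma_{m_0}=a_{m_0}<b_{m_0}$, so $\gamma<_{lex}\beta$. Either way $\gamma<_P\beta$, giving (b).

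For (a), note that in both cases $\gamma$ is obtained from $\beta$ by changing only its $m_0$-th entry, and in both cases the entry being changed satisfies $b_{m_0}\neq 1$. Hence $D(\gamma)=(D(\beta)\setminus\set{u_{m_0,b_{m_0}}})\cup\set{u_{m_0,\gamma_{m_0}}}$ is a minimal odd-TD-set by Lemma~\ref{lem. non-zero to 1 is TD-set}, applied to $D(\beta)$ at the index $m_0$ with $c=\gamma_{m_0}$. Thus $F_k:=\nu_T^{-1}(\gamma)$ is a genuine facet of $\calS_{even}(T)$ with $F_k<_T F_2$, $F_1\cap F_2\subseteq F_k\cap F_2$, and $|F_k\cap F_2|=d$, which is precisely what Fact~\ref{fact. another def of shelling} requires; this completes the verification that $<_T$ is a shelling.

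The main obstacle is reconciling requirements (a) and (b): flipping an entry of $\beta$ toward $\alpha$ must simultaneously move the vector earlier in $<_P$ and keep it representing a minimal odd-TD-set. The only replacement guaranteed to preserve minimality is the one supplied by Lemma~\ref{lem. non-zero to 1 is TD-set}, whose hypothesis is that the entry being replaced is $\neq 1$. The order $<_P$ is designed exactly to make $1$'s ``cheap'' (they push a vector earlier), so the two-way case split on $\ell(\alpha)$ versus $\ell(\beta)$ always lets me pinpoint a single index $m_0$ where $b_{m_0}\neq 1$ and the flip is order-decreasing. Identifying that one coordinate discharging both conditions at once is the crux; once it is found, the containment and intersection-size bookkeeping are immediate from Lemma~\ref{lem. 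F1 cap F2 size}.
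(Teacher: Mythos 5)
Your proposal is correct and follows essentially the same route as the paper's proof: it verifies the criterion of Fact~\ref{fact. another def of shelling}, splits on the two clauses of $<_P$, and in each case flips exactly one coordinate of $\nu_T(F_2)$ toward $\nu_T(F_1)$ at an index where the entry of $\nu_T(F_2)$ is $\neq 1$, invoking Lemma~\ref{lem. non-zero to 1 is TD-set} for facet-hood and Lemma~\ref{lem. F1 cap F2 size} for the intersection size. Your explicit check that $b_{m_0}\neq 1$ in both cases is a slightly more careful statement of the hypothesis needed for Lemma~\ref{lem. non-zero to 1 is TD-set} than the paper gives, but the argument is otherwise identical.
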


\begin{proof}
    Since $\mathcal{F}(\calS_{even}(T)) \subseteq U_T $ by Lemma~\ref{lem. |N(s) cap D| = 1}, $<_T$ is a total order on $\mathcal{F}(\calS_{even}(T))$.
    Now index the facets in $\calS_{even}(T)$ under $<_T$ and write
    $$
    \mathcal{F}(\calS_{even}(T)) = \set{F_1,F_2,\dots,F_m}.
    $$
    To show that $<_T$ is a shelling, suppose that there exist indicies $i,j$ with $1 \leq i < j \leq m$ such that $|F_i \cap F_j| < \dim(\calS_{even}(T))$.
    We need to find a facet $F \in \mathcal{F}(\calS_{even}(T))$ such that $F <_T F_j$, $F_i \cap F_j \subseteq F \cap F_j$, and $|F \cap F_j| = \dim(\calS_{even}(T))$.
    Write $\nu_T(F_i) = (a_1,\dots,a_p)$ and $\nu_T(F_j) = (b_1,\dots,b_p)$.
    Since $F_i <_T F_j$, $\nu_T(F_i) <_P \nu_T(F_j)$, so there are two cases to consider.\newline
    
    \noindent\emph{Case 1}: $\ell(\nu_T(F_i)) > \ell(\nu_T(F_j))$.
    Then $\nu_T(F_i)$ has more 1's then $\nu_T(F_j)$.
    Hence there exists some index $k$ such that $a_k = 1$ and $b_k \neq 1$.
    Now consider 
    $$
    F = V_{even}\setminus \set{u_{1,b_1},\dots,u_{k-1,b_{k-1}}, u_{k,1}, u_{k+1,b_{k+1}},\dots,u_{p,b_p}} \in U_T 
    $$
    with
    $$
    \nu_T(F) = (b_1,\dots,b_{k-1},1,b_{k+1},\dots,b_p).
    $$
    Then $F \in \mathcal{F}(\calS_{even}(T))$ by Lemma~\ref{lem. non-zero to 1 is TDS}.
    Since $\ell(\nu_T(F)) = \ell(\nu_T(F_j)) + 1$, we get $F <_T F_j$.
    Since we have
    $$
    F_i \cap F_j = V_{even} \setminus \left( \set{u_{1,a_1},\dots,u_{p,a_p}} \cup \set{u_{1,b_1},\dots,u_{p,b_p}}\right)
    $$
    and
    \begin{align*}
        F \cap F_j &= V_{evem} \setminus \left(\set{u_{1,b_1},\dots,u_{k-1,b_{k-1}}, u_{k,1}, u_{k+1,b_{k+1}},\dots,u_{p,b_p}} \cup \set{u_{1,b_1},\dots,u_{p,b_p}} \right)\\ 
        &= V_{even} \setminus \left( \set{u_{k,1}}\cup\set{u_{1,b_1},\dots,u_{p,b_p}}\right)
    \end{align*}
    the choice of $k$ giving $u_{k,1} = u_{k,a_k} \in \set{u_{1,a_1},\dots,u_{p,a_p}}$ implies 
    $$
    \left( \set{u_{k,1}}\cup\set{u_{1,b_1},\dots,u_{p,b_p}}\right) \subseteq \left( \set{u_{1,a_1},\dots,u_{p,a_p}} \cup \set{u_{1,b_1},\dots,u_{p,b_p}}\right)
    $$
    hence $F_i \cap F_j \subseteq F \cap F_j$. 
    Finally, we have $|F \cap F_j| = \dim(\calS_{even}(T))$ by Lemma~\ref{lem. F1 cap F2 size} as $\nu_T(F_j)$ and $\nu_T(F)$ only differ by one digit.\newline

    \noindent\emph{Case 2}: $\ell(\nu_T(F_i)) = \ell(\nu_T(F_j))$ and $\nu_T(F_i) <_{lex} \nu_T(F_j)$.
    Since $\nu_T(F_i) <_{lex} \nu_T(F_j)$, there is an index $k \in [p]$ such that $a_k < b_k$ and $a_t = b_t$ for all $t < k$.
    Let 
    $$
    F = V_{even} \setminus \set{u_{1,b_1},\dots,u_{k-1,b_{k-1}}, u_{k,a_k}, u_{k+1,b_{k+1}},\dots,u_{p,b_p}} \in U_T 
    $$ 
    with
    $$
    \nu_T(F) = (b_1,\dots,b_{k-1},a_{k},b_{k+1},\dots,b_p).
    $$
    Then $F \in \mathcal{F}(\calS_{even}(T))$ by Lemma~\ref{lem. non-zero to 1 is TDS}.
    To show that $F <_T F_j$, we consider two cases.
    If $a_k = 1$, then $\ell(\nu_T(F)) = \ell(\nu_T(F_j)) + 1$, hence $F <_T F_j$.
    If $a_k > 1$, then $\ell(\nu_T(F)) = \ell(\nu_T(F_j))$ and $\nu_T(F) <_{lex} \nu_T(F_j)$ as $a_k < b_k$, hence $F <_T F_j$.
    Similar to Case 1, we get $F_i \cap F_j \subseteq F \cap F_j$, and $|F \cap F_j| = \dim(\calS_{even}(T))$ by Lemma~\ref{lem. F1 cap F2 size}.
\end{proof}

\begin{example}
    We demonstrate Case 2 in the proof of Theorem~\ref{thm. even stable complex is shellable} using the WTD balanced tree from Example~\ref{ex. labeling for an WTD delt tree of h = 3}.
    Consider the sets $F_1$ and $F_2$ from Example~\ref{ex. D1 D2 F1 F2 example}.
    Since $F_2 <_T F_1$ and $|F_2 \cap F_1| = 3 < 6 = \dim(\calS_{even}(T))$, we construct a facet $F \in \mathcal{F}(\calS_{even}(T))$ such that $F_2 \cap F_1 \subseteq F \cap F_1$ and $|F\cap F_1| = 6$.
    Let $F \in U_T $ such that $\nu_T(F) = (1,1,2,1)$.
    Then $F \in \mathcal{F}(\calS_{even}(T))$ by Lemma~\ref{lem. non-zero to 1 is TDS}, and 
    $$
    |F \cap F_1| = \dim(\calS_{even}(T)) + 1 - 1 = \dim(\calS_{even}(T))
    $$
    by Lemma~\ref{lem. F1 cap F2 size}.
    Since 
    \begin{align*}
    F_2 \cap F_1 &= V_{even}\setminus \left(\set{u_{1,3},u_{2,1},u_{3,2},u_{4,1}} \cup \set{u_{1,1},u_{2,2},u_{3,1},u_{4,2}} \right)
    \end{align*}
    and
    $$
    F \cap F_1 = V_{even} \setminus \left(\set{u_{2,1}} \cup \set{u_{1,1},u_{2,2},u_{3,1},u_{4,2}} \right)
    $$
    we get $F_2 \cap F_1 \subseteq F \cap F_1$.
    Since $\ell(F) = 3$ and $\ell(F_2) = 2$, we get $F <_T F_2$.
\end{example}

\begin{corollary}\label{cor. even stable complex of delta forest is shellable}
    Let $T$ be an WTD balanced forest.
    Then $\calS_{even}(T)$ is shellable.
\end{corollary}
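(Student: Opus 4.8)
The plan is to reduce the forest case to the tree cases already settled, by splitting $T$ into its connected components and exhibiting $\calS_{even}(T)$ as an iterated join of the even-stable complexes of those components. Write $T = T_1 \sqcup \cdots \sqcup T_r$ as the disjoint union of its connected components; since $T$ is a balanced forest, each $T_i$ is a balanced tree. First I would verify that each $T_i$ is itself unmixed, so that the earlier results apply to it. This is where the hypothesis that the whole forest is unmixed gets used: because distinct components share no edges, for every $D \subseteq V(T)$ we have
\[
N_T(D) = \bigcup_{i=1}^{r} N_{T_i}\bigl(D \cap V(T_i)\bigr),
\]
so a minimal (odd-)dominating set of $T$ is precisely a disjoint union of minimal (odd-)dominating sets of the $T_i$. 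If some component $T_j$ admitted minimal dominating sets of two different sizes, splicing them against fixed minimal dominating sets of the remaining components would produce minimal dominating sets of $T$ of different total size, contradicting unmixedness. Hence every $T_i$ is an unmixed balanced tree.

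Next I would establish the join decomposition
\[
\calS_{even}(T) = \calS_{even}(T_1) * \cdots * \calS_{even}(T_r).
\]
The same no-crossing-edges identity above, applied to the odd vertices, shows that $D$ is a minimal odd-TD-set of $T$ if and only if each restriction $D \cap V(T_i)$ is a minimal odd-TD-set of $T_i$. Since $V_{even}(T) = \bigsqcup_i V_{even}(T_i)$, the facets $V_{even}(T)\setminus D$ of $\calS_{even}(T)$ are exactly the unions $\bigcup_i \bigl(V_{even}(T_i)\setminus D_i\bigr)$ of facets of the individual $\calS_{even}(T_i)$, which is precisely the facet description of the iterated join from Definition~\ref{def. join of simp comp} (the vertex sets $V_{even}(T_i)$ being pairwise disjoint as required).

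It then remains to show each factor is shellable and to assemble the join. By Theorems~\ref{thm. height >= 4 is mixed} and~\ref{thm. height(T)=2 -> mixed}, an unmixed balanced tree has height $0$, $1$, or $3$; the heights $0$ and $1$ are covered by Lemma~\ref{lem. h(T) = 0,1 -> unmixed and shellable} and height $3$ by Theorem~\ref{thm. even stable complex is shellable}, so each $\calS_{even}(T_i)$ is shellable. Finally, Fact~\ref{fact. D'*D'' is shell <->  D',D'' are shell} gives that a join of two shellable complexes is shellable, and an induction on $r$ upgrades this to the $r$-fold join, yielding shellability of $\calS_{even}(T)$. I expect the only genuine content to be the join decomposition in the middle paragraph: one must be careful that \emph{minimality} of odd-TD-sets is detected component-by-component — a reduction in one component cannot be compensated in another precisely because the components share no edges — and that this minimality statement lines up exactly with the facet structure of the join. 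Everything else (the inheritance of unmixedness, the height trichotomy, the base cases, and the inductive application of Fact~\ref{fact. D'*D'' is shell <->  D',D'' are shell}) is routine bookkeeping.
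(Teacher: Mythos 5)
Your proposal is correct and follows essentially the same route as the paper: decompose $T$ into connected components, observe that minimal odd-TD-sets of $T$ are exactly disjoint unions of minimal odd-TD-sets of the components so that $\calS_{even}(T)$ is the iterated join of the $\calS_{even}(T_i)$, and then combine Lemma~\ref{lem. h(T) = 0,1 -> unmixed and shellable}, Theorem~\ref{thm. even stable complex is shellable}, and Fact~\ref{fact. D'*D'' is shell <-> D',D'' are shell}. The only difference is that you spell out the splicing argument for why each component inherits unmixedness and why minimality is detected componentwise, details the paper leaves implicit.
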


\begin{proof}
    Let $T_1,\dots,T_c$ be the connected components of $T$, which are all WTD balanced trees.
    Then Lemma~\ref{lem. h(T) = 0,1 -> WTD and shellable} and Theorem~\ref{thm. even stable complex is shellable} implies that $\calS_{even}(T_i)$ is shellable for all $i \in [c]$.
    A set $D = D_1 \cup \cdots \cup D_c$ is a minimal odd-TDS of $T$ if and only if $D_i$ is a minimal odd-TDS of $T_i$ for all $i$. 
    Thus we get
    $$
    \calS_{even}(T) = \calS_{even}(T_1) * \cdots * \calS_{even}(T_c)
    $$
    therefore $\calS_{even}(T)$ is shellable by Fact~\ref{fact. D'*D'' is shell <->  D',D'' are shell}.
\end{proof}

\begin{theorem}\label{thm. stable complex is shellable}
    Let $T$ be an WTD tree.
    Then $\calS(T)$ is shellable.
\end{theorem}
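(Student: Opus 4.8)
The plan is to bootstrap the statement entirely from the structural results already established for the interior graphs, reducing the shellability of $\calS(T)$ to that of the even-stable complexes of $T_\B$ and $T_\R$. The key observation is that Theorem~\ref{thm. stable comp. is join of even complexes} already expresses $\calS(T)$ as a join, $\calS(T) = \calS_{even}(T_\B) * \calS_{even}(T_\R)$, so by Fact~\ref{fact. D'*D'' is shell <->  D',D'' are shell} it will suffice to prove that each factor $\calS_{even}(T_\B)$ and $\calS_{even}(T_\R)$ is shellable. In other words, the whole theorem becomes an assembly of prior results rather than a fresh combinatorial argument.

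First I would invoke Corollary~\ref{cor. T unmixed <-> TBTR unmixed}: since $T$ is unmixed, its interior graphs $T_\B$ and $T_\R$ are both unmixed. Next, Lemma~\ref{lem. interior is delta forest} guarantees that $T_\B$ and $T_\R$ are balanced forests. Combining these two facts, $T_\B$ and $T_\R$ are unmixed balanced forests, so Corollary~\ref{cor. even stable complex of delta forest is shellable} applies directly and yields that $\calS_{even}(T_\B)$ and $\calS_{even}(T_\R)$ are shellable. Finally, substituting into the join decomposition of Theorem~\ref{thm. stable comp. is join of even complexes} and applying Fact~\ref{fact. D'*D'' is shell <->  D',D'' are shell} shows that $\calS(T)$ is shellable, completing the proof.

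There is essentially no hard step remaining, since the heavy lifting was carried out in Theorem~\ref{thm. even stable complex is shellable} and in the join decomposition. The only point requiring a little care is confirming that the notion of \emph{unmixed} transfers correctly from $T$ to each connected component of the interior forests, which is precisely the hypothesis that Corollary~\ref{cor. even stable complex of delta forest is shellable} presupposes. This compatibility holds because a minimal TD-set of a forest is the disjoint union of minimal TD-sets of its components, so the total size is constant exactly when each component's minimal TD-sets have constant size; hence an unmixed balanced forest has each of its components unmixed. Once this observation is recorded, the argument is a direct chain of the cited results.
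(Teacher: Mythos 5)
Your proposal is correct and follows essentially the same route as the paper's own proof: decompose $\calS(T)$ as the join $\calS_{even}(T_\B) * \calS_{even}(T_\R)$ via Theorem~\ref{thm. stable comp. is join of even complexes}, apply Corollary~\ref{cor. even stable complex of delta forest is shellable} to each factor, and conclude with Fact~\ref{fact. D'*D'' is shell <->  D',D'' are shell}. Your added remarks --- citing Corollary~\ref{cor. T unmixed <-> TBTR unmixed} and Lemma~\ref{lem. interior is delta forest} to justify that the interior graphs are unmixed balanced forests, and noting that unmixedness of a forest is equivalent to unmixedness of each component --- merely make explicit hypotheses the paper's proof leaves implicit.
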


\begin{proof}
    Let $T_\R$ and $T_\B$ be the interior graphs of $T$.
    Then by Corollary~\ref{cor. even stable complex of delta forest is shellable}, $\calS_{even}(T_\R)$ and $\calS_{even}(T_\B)$ are shellable.
    By Theorem~\ref{thm. stable comp. is join of even complexes}, we get $\calS(T) = \calS_{even}(T_\R) *\calS_{even}(T_\B)$, therefore $\calS(T)$ is shellable.
\end{proof}

As a final remark, we get the following Cohen-Macaulay theorem.

\begin{corollary}\label{cor. CM of ONI and SON}
    Let $T$ be an WTD tree and let $T'$ be an WTD balanced tree.
    Then $\kk[V(T)]/\NN(T)$ and $\kk[V_{even}(T')]/\NN_{V_{odd}(T')}(T')$ are Cohen-Macaulay.
\end{corollary}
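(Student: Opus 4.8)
The plan is to invoke the classical fact that a pure shellable simplicial complex has a Cohen-Macaulay Stanley-Reisner ring over any field (see, e.g., \cite{MR1251956}), and then to identify both quotient rings as Stanley-Reisner rings of complexes that have already been shown to be shellable.

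First I would dispatch $\kk[V(T)]/\NN(T)$. By Fact~\ref{fact. SRI of RC is ONI} we have $\NN(T) = I_{\calS(T)}$, so $\kk[V(T)]/\NN(T)$ is exactly the Stanley-Reisner ring of the stable complex $\calS(T)$. Since $T$ is an unmixed tree, Theorem~\ref{thm. stable complex is shellable} shows $\calS(T)$ is shellable, hence Cohen-Macaulay, and therefore so is $\kk[V(T)]/\NN(T)$.

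For the second ring I would first establish the analog of Fact~\ref{fact. SRI of RC is ONI} inside the smaller polynomial ring $\kk[V_{even}(T')]$: namely that $\NN_{V_{odd}(T')}(T')$ is the Stanley-Reisner ideal of $\calS_{even}(T')$ on the vertex set $V_{even}(T')$. The two key observations are that for every $v \in V_{odd}(T')$ the neighborhood $N(v)$ consists entirely of even-height vertices (because $T'$ is balanced, so adjacent vertices differ in height by one by Lemma~\ref{lem. adjacent, then height differ by 1}), whence each generator $X_{N(v)}$ genuinely lies in $\kk[V_{even}(T')]$; and that for the same reason every minimal odd-TD-set is contained in $V_{even}(T')$. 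Applying Theorem~\ref{thm. decomp of SON} with $S = V_{odd}(T')$ gives the irredundant decomposition $\NN_{V_{odd}(T')}(T') = \bigcap_{D} \gen{D}$ taken over the minimal odd-TD-sets $D$. Thus a squarefree monomial $X_F$ with $F \subseteq V_{even}(T')$ lies outside $\NN_{V_{odd}(T')}(T')$ precisely when $F \cap D = \emptyset$ for some minimal odd-TD-set $D$, i.e. when $F \subseteq V_{even}(T') \setminus D$. This identifies the facets of $\Delta_{\NN_{V_{odd}(T')}(T')}$ as exactly the sets $V_{even}(T') \setminus D$, which are the facets of $\calS_{even}(T')$ by Definition~\ref{def. even stable complex of delta trees}. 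Hence $\kk[V_{even}(T')]/\NN_{V_{odd}(T')}(T')$ is the Stanley-Reisner ring of $\calS_{even}(T')$.

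Finally, since $T'$ is an unmixed balanced tree it is in particular an unmixed balanced forest, so $\calS_{even}(T')$ is shellable by Corollary~\ref{cor. even stable complex of delta forest is shellable}; therefore it is Cohen-Macaulay, and so is $\kk[V_{even}(T')]/\NN_{V_{odd}(T')}(T')$. I expect the only real obstacle to be the bookkeeping in the second part, that is, verifying carefully that the Stanley-Reisner correspondence survives the passage to the restricted ambient ring $\kk[V_{even}(T')]$ — in particular that neither a generator of $\NN_{V_{odd}(T')}(T')$ nor a component of its minimal decomposition involves a vertex outside $V_{even}(T')$. Everything else is a direct application of the shellability results already proved together with the standard shellable-implies-Cohen-Macaulay theorem.
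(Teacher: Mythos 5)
Your proposal is correct and follows the paper's own argument: the first quotient is handled via Fact~\ref{fact. SRI of RC is ONI}, Theorem~\ref{thm. stable complex is shellable}, and the standard shellable-implies-Cohen-Macaulay theorem, and the second by identifying $\NN_{V_{odd}(T')}(T')$ as the Stanley-Reisner ideal of $\calS_{even}(T')$ via Theorem~\ref{thm. decomp of SON} and invoking the shellability of that complex. You have merely written out in full the verification that the paper leaves to the reader.
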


\begin{proof}
    The fact that the first quotient $\kk[V(T)]/\NN(T)$ is Cohen-Macaulay is a consequence of Theorem~\ref{thm. stable complex is shellable} and Fact~\ref{fact. SRI of RC is ONI} (see \cite[Theorem 5.1.13]{MR1251956}).
    For the second quotient, one can show that $\NN_{V_{odd}(T')}(T')$ is the Stanley-Reisner ideal of $\calS_{even}(T')$ using Theorem~\ref{thm. decomp of SON}.
\end{proof}

\begin{remarkn}
    Let $T$ be a tree.
    The \emph{suspension} of $T$ is a tree $\Sigma T$ obtained by adding a leaf to each vertex of $T$.
    It is well known that the edge ideal of $\Sigma T$, $\II(\Sigma T)$, is Cohen-Macaulay \cite{MR1031197}.
    A \emph{complete edge subdivision} of a graph $G$ is a graph $\mathcal{E}(G)$ obtained by inserting a new vertex on each edge in $G$.
    It is not hard to see that $E(\Sigma T)$ is either a $P_2$ or a height 3 WTD balanced tree.
    Moreover, we have
    $$
    \II(\Sigma T) = \NN_{V_{odd}}(\mathcal{E}(\Sigma T)).
    $$
    Hence every Cohen-Macaulay edge ideal of trees arises as an odd-open neighborhood ideal of an WTD balanced tree. 
\end{remarkn}

\section{Cohen-Macaulay Type of WTD trees}
\label{sec241204d}

In this section, we give a graph theoretic way to compute the Cohen-Macaulay type of open neighborhood ideal of WTD trees.

\subsection{Cohen-Macaulay type of balanced trees}

In this subsection, we compute the type of $\NN_{V_{\text{odd}}}(T)$ where $T$ is an WTD balanced tree.

\begin{assumptions}
In this subsection, we assume that $T = (V,E)$ is an WTD balanced tree, and set $R = \kk[V_{\text{even}}]$ and $(V_{\text{even}})R =: \mathfrak{X}$, unless otherwise stated.
Also, we write $\NN_{\text{odd}}(T) := \NN_{V_{\text{odd}}}(T)$.
\end{assumptions}

To compute $\tp(R/\NN_{\text{odd}}(T))$, we first identify a maximal $(R/\NN_{\text{odd}}(T))$-regular sequence.
First, we consider WTD balanced trees of height 0 and 1.

\begin{lemma}\label{lemma. max. reg. seq. of hh(T) < 3}
    Suppose that $\hh(T) \leq 1$.
    Let $Z \subseteq R$ be defined as followings:
    \begin{itemize}
        \item[(1)] If $\hh(T) = 0$, set $V = \set{v}$ and $Z := \set{v}$;
        \item[(2)] If $\hh(T) = 1$, set $V_1 = \set{s}$, $V_0 = \set{\ell_1,\dots,\ell_{n_0}}$ with $n_0 \geq 2$, and
        $$
        Z := \Set{\ell_{1}-\ell_j}{2 \leq j \leq n_0}.
        $$
    \end{itemize}
    Then $Z$ is a maximal $R/\NN_{\text{odd}}(T)$-regular sequence in $\mathfrak{X}$. 
\end{lemma}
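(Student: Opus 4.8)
The plan is to treat the two cases separately, in each case first computing the ideal $\NN_{\text{odd}}(T)$ explicitly and then analyzing the quotient $A := R/\NN_{\text{odd}}(T)$, recalling that a maximal $A$-regular sequence contained in $\mathfrak{X}$ has length $\depth_{\mathfrak{X}}(A)$. In case (1) the single vertex $v$ has height $0$, so $V_{\text{even}} = \set{v}$ and $V_{\text{odd}} = \emptyset$; hence there are no generators and $\NN_{\text{odd}}(T) = (0)$, giving $A = R = \kk[v]$ with $\mathfrak{X} = (v)R$. Since $\kk[v]$ is a domain, $v$ is a nonzerodivisor, so $Z = \set{v}$ is an $A$-regular sequence in $\mathfrak{X}$; it is maximal because $A/(v)A \cong \kk$, in which the image of $\mathfrak{X}$ is zero and hence contains no nonzerodivisor.

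Case (2) is the substantive one. Here $s$ has odd height while the leaves $\ell_1,\dots,\ell_{n_0}$ have even height, so $V_{\text{odd}} = \set{s}$, $V_{\text{even}} = \set{\ell_1,\dots,\ell_{n_0}}$, and $R = \kk[\ell_1,\dots,\ell_{n_0}]$. Since $N(s) = \set{\ell_1,\dots,\ell_{n_0}}$, Definition~\ref{def. S-TD-set, SONI} gives $\NN_{\text{odd}}(T) = \gen{X_{N(s)}} = (\ell_1\cdots\ell_{n_0})$, a principal ideal generated by a nonzerodivisor of the polynomial ring $R$. Thus $A = R/(\ell_1\cdots\ell_{n_0})$ is a hypersurface, hence Cohen-Macaulay of dimension $n_0 - 1$, and $\mathfrak{X}$ is the irrelevant (homogeneous maximal) ideal of the standard graded algebra $A$, so that $\depth_{\mathfrak{X}}(A) = \dim A = n_0 - 1 = |Z|$.

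The key step is to identify the quotient by $Z$. Modding out $A$ by the $n_0 - 1$ linear forms $\ell_1 - \ell_j$ (all of which lie in $\mathfrak{X}$) identifies every variable with a single variable $t$, so that $\ell_1\cdots\ell_{n_0}$ becomes $t^{n_0}$ and
$$
A/(Z)A \cong \kk[t]/(t^{n_0}).
$$
This is Artinian, so $\dim A/(Z)A = 0 = \dim A - (n_0 - 1)$, which shows that the $n_0 - 1$ homogeneous elements of $Z$ form a homogeneous system of parameters for $A$. Since $A$ is Cohen-Macaulay, every system of parameters is a regular sequence, so $Z$ is an $A$-regular sequence in $\mathfrak{X}$ of the maximal length $\depth_{\mathfrak{X}}(A)$, and is therefore maximal. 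Maximality is also visible directly: in $A/(Z)A \cong \kk[t]/(t^{n_0})$ the image of $\mathfrak{X}$ is the nilpotent ideal $(t)$, every element of which is a zerodivisor, so no element of $\mathfrak{X}$ can extend $Z$.

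The only real obstacle is the passage from \emph{system of parameters} to \emph{regular sequence}, which is exactly where the Cohen-Macaulayness of the hypersurface $A$ is used; everything else reduces to the explicit identification of the two quotient rings above.
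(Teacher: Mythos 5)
Your proposal is correct and follows essentially the same route as the paper: compute $\NN_{\text{odd}}(T)$ explicitly in each case, identify $A/(Z)A$ with $\kk[t]/(t^{n_0})$ to see that $Z$ is a system of parameters, and invoke Cohen--Macaulayness of $A$ to upgrade this to a maximal regular sequence. The only cosmetic difference is that you justify $\dim A = n_0-1$ and the Cohen--Macaulay property via the hypersurface structure of $A$, whereas the paper reads the dimension off the m-irreducible decomposition $\bigcap_i\gen{\ell_i}$ and cites its earlier Cohen--Macaulay result.
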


\begin{proof}
    (1) If $\hh(T) = 0$, then $T$ is a graph with a single vertex, and we have $R = \kk[v]$ and $\NN_{\text{odd}}(T) = 0$.
    Thus $R/\NN_{\text{odd}}(T) \cong R = \kk[v]$ and $\set{v} = Z$ is a maximum regular sequence for $R$.

    (2) Now suppose that $\hh(T) = 1$.
    We have $R = \kk[V_{\text{even}}] = \kk[\ell_{1},\dots, \ell_{n_0}]$ and $\NN_{\text{odd}}(T) = \gen{X_{N(s)}} = \gen{\ell_{1}\cdots\ell_{n_0}}$.
    Since the m-irreducible decomposition of $\NN_{\text{odd}}(T)$ is
    $$
    \NN_{\text{odd}}(T) = \gen{\ell_{1}\cdots \ell_{n_0}} = \bigcap_{i = 1}^{n_0} \gen{\ell_{i}},
    $$
    we have $\dim(R/\NN_{\text{odd}}(T)) = n_0 - 1$.
    Then we have
    \begin{align*}
        \frac{R}{\NN_{\text{odd}}(T) + \gen{Z}} = \frac{\kk[\ell_{1},\dots, \ell_{n_0}]}{\gen{\ell_{1}\cdots\ell_{n_0}}+\gen{\Set{\ell_{1}-\ell_{j}}{2 \leq j \leq n_0}}} \cong \frac{\kk[\ell_{1}]}{\gen{\ell_{1}^{n_0}}}.
    \end{align*}
    Since the ideal $\gen{\ell_{1}^{n_0}}$ is an m-irreducible decomposition, we get that $\dim(\kk[\ell_{1}]/\gen{\ell_{1}^{n_0}}) = 1 - 1 = 0$.
    Since $R/\NN_{\text{odd}}(T)$ is Cohen-Macaulay and $|Z| = n_0 - 1$, 
    the elements in $Z$ form a maximal $(R/\NN_{\text{odd}}(T))$-regular sequence.
\end{proof}

\begin{corollary}\label{cor. depth for ht 0 and 1}
Suppose that $\hh(T) \leq 1$. Then
$$
\depth(R/\NN_{\text{odd}}(T)) = \begin{cases}
1 & \mbox{ if }\hh(T) = 0\\
|V_0|-1 & \mbox{ if } \hh(T) = 1
\end{cases}.
$$
\end{corollary}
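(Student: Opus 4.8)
The plan is to read this off directly from Lemma~\ref{lemma. max. reg. seq. of hh(T) < 3} together with the standard homological characterization of depth. Recall that for a finitely generated graded module $M$ over the polynomial ring $R$ with irrelevant maximal ideal $\mathfrak{X} = (V_{\text{even}})R$, the depth $\depth(M) = \depth_{\mathfrak{X}}(M)$ equals the common length of every maximal $M$-regular sequence consisting of homogeneous elements of $\mathfrak{X}$ (see, e.g., \cite[Theorem 1.2.5]{MR1251956}). So the whole argument reduces to identifying such a sequence and counting its elements.

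First I would invoke Lemma~\ref{lemma. max. reg. seq. of hh(T) < 3}, which exhibits an explicit set $Z \subseteq \mathfrak{X}$ and asserts that it is a maximal $(R/\NN_{\text{odd}}(T))$-regular sequence. Each element of $Z$ is homogeneous of degree $1$ and lies in $\mathfrak{X}$, so the characterization above applies and gives
$$
\depth(R/\NN_{\text{odd}}(T)) = |Z|.
$$
It then remains only to count $|Z|$ in each of the two cases of the lemma.

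In case $\hh(T) = 0$, the lemma sets $Z = \set{v}$, so $|Z| = 1$, yielding $\depth(R/\NN_{\text{odd}}(T)) = 1$. In case $\hh(T) = 1$, the lemma sets
$$
Z = \Set{\ell_{1} - \ell_j}{2 \leq j \leq n_0},
$$
which has $n_0 - 1$ elements; since $n_0 = |V_0|$, this gives $|Z| = |V_0| - 1$ and hence $\depth(R/\NN_{\text{odd}}(T)) = |V_0| - 1$. Assembling the two cases produces the claimed piecewise formula.

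There is essentially no obstacle here: the content has already been absorbed into the preceding lemma, and the only thing to be careful about is that the depth-equals-maximal-regular-sequence-length theorem is being applied in the graded setting with respect to $\mathfrak{X}$, which is legitimate because all the generators of $Z$ are homogeneous elements of $\mathfrak{X}$. The corollary is therefore immediate once the bookkeeping of $|Z|$ is recorded.
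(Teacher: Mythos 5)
Your proposal is correct and matches the paper's (implicit) argument exactly: the corollary is stated as an immediate consequence of Lemma~\ref{lemma. max. reg. seq. of hh(T) < 3}, obtained by counting the elements of the exhibited maximal regular sequence $Z$ and using that depth equals the length of a maximal homogeneous regular sequence in $\mathfrak{X}$. The bookkeeping $|Z| = 1$ when $\hh(T) = 0$ and $|Z| = n_0 - 1 = |V_0| - 1$ when $\hh(T) = 1$ is exactly what is needed.
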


Now we consider WTD balanced trees of height 3.
We begin with a vertex labeling that will be used throughout the proofs in this section for simplicity.

\begin{notation}\label{notation. vertex labling}
Assume $T$ has height 3.
For $\ell = 0,1,2,3,$ set $n_\ell := |V_\ell|$.
Theorem~\ref{thm. char. of WTD delt. trees} allows us to denote the vertices of $T$ as follows:
\begin{itemize}
\item[(1)] write $V_1 := \{s_1,\ \dots\ ,s_{n_1}\}$ and $V_3 := \{r_1,\ \dots\ ,r_{n_3}\}$; 
\item[(2)] for $i \in [k]$ write $u_i$ for the unique height 2 vertex adjacent to $s_i$; and
\item[(3)] write $\ell_{i,1},\ \dots\ ,\ell_{i,m_i}$ for the leaves adjacent to $s_i$ ($m_i \geq 1$).
\end{itemize}   
Since $T$ is WTD of height 3, we have $n_1 = n_2$.
\end{notation}

\begin{example}\label{ex. vertex labeling of WTD delta trees}
Using Notation~\ref{notation. vertex labling}, we can label the vertices of an WTD balanced tree $T$ of height 3 as in Figure~\ref{fig:vertex labeling for delta trees}.
\begin{figure}[ht]
    \centering
    \includegraphics{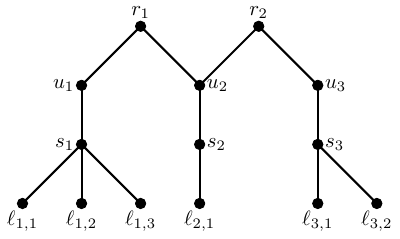}
    \caption{Example of a vertex labeling on a balanced tree using Notation~\ref{notation. vertex labling}.}
    \label{fig:vertex labeling for delta trees}
\end{figure}
Here, $n_1 = |V_1| = 3$, $n_3 = |V_3| = 2$, $m_{1} = 3$, $m_2 = 1$, and $m_3 = 2$.
The labeling rule (2) in Notation~\ref{notation. vertex labling} is well defined by Theorem~\ref{thm. char. of WTD delt. trees}.
Also, we have $|V_1| = |V_2| = n_2$ (for height 3 WTD balanced trees), $|V_3| = n_3$, and for $i \in [n_1]$, there are $m_i$ leaves adjacent to $s_i$.
Thus, we have
\begin{align}
|V_{\text{even}}| = n_2 + \sum_{i = 1}^{n_2} m_i = n_2 + n_0  
\end{align}
e.g., for the tree in Figure~\ref{fig:vertex labeling for delta trees}, we have
$$
9 = |V_{\text{even}}| = 3 + (3 + 1 + 2) = 3 + 6.
$$
\end{example}

We begin with a short lemma about the depth of $R/\NN_{\text{odd}}(T)$ when $\hh(T) = 3$.

\begin{lemma}\label{lem. dep = n_0 if hh(T) = 3}
    Let $\hh(T) = 3$.
    Using Notation~\ref{notation. vertex labling}, we have 
    $$
    \depth(R/\NN_{\text{odd}}(T)) = n_0.
    $$
\end{lemma}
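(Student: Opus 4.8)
The plan is to exploit the Cohen--Macaulay property established in Corollary~\ref{cor. CM of ONI and SON}, which reduces the computation of $\depth(R/\NN_{\text{odd}}(T))$ to that of the Krull dimension $\dim(R/\NN_{\text{odd}}(T))$. Indeed, since $\kk[V_{\text{even}}]/\NN_{\text{odd}}(T)$ is Cohen--Macaulay, its depth and its dimension coincide, so it suffices to compute the dimension.

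To compute the dimension I would pass to the Stanley--Reisner picture. As noted in the proof of Corollary~\ref{cor. CM of ONI and SON}, the ideal $\NN_{\text{odd}}(T)$ is the Stanley--Reisner ideal of the even-stable complex $\calS_{even}(T)$ on the vertex set $V_{\text{even}}$; equivalently $\Delta_{\NN_{\text{odd}}(T)} = \calS_{even}(T)$. The standard formula for Stanley--Reisner rings then gives $\dim(R/\NN_{\text{odd}}(T)) = \dim \calS_{even}(T) + 1$, so it remains to determine the facet size of $\calS_{even}(T)$.

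By definition, the facets of $\calS_{even}(T)$ are exactly the sets $V_{\text{even}}\setminus D$ as $D$ ranges over the minimal odd-TD-sets of $T$. By Lemma~\ref{lem. |N(s) cap D| = 1}, every minimal odd-TD-set $D$ selects precisely one vertex from each $N(s_i)$ and hence has cardinality $|D| = n_1 = n_2$. Consequently every facet has the same size $|V_{\text{even}}| - n_2$, and using the identity $|V_{\text{even}}| = n_2 + n_0$ from Example~\ref{ex. vertex labeling of unmixed delta trees} this common value equals $n_0$. Since distinct minimal odd-TD-sets yield distinct equicardinal complements, none of these generating sets contains another, so they are genuinely the facets; therefore $\dim \calS_{even}(T) = n_0 - 1$, whence $\dim(R/\NN_{\text{odd}}(T)) = n_0$ and, by Cohen--Macaulayness, $\depth(R/\NN_{\text{odd}}(T)) = n_0$.

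The argument is essentially bookkeeping once the right machinery is in place; the real content lies in two prior facts. The first is the identification $\Delta_{\NN_{\text{odd}}(T)} = \calS_{even}(T)$ together with Cohen--Macaulayness, which licenses the reduction from depth to the combinatorial dimension. The second, and the crux, is the uniformity of facet sizes, i.e.\ that all minimal odd-TD-sets have the same cardinality $n_2$; this is precisely where unmixedness enters, through Lemma~\ref{lem. |N(s) cap D| = 1} and the height-$3$ structure theory. If one wished to avoid invoking Cohen--Macaulayness, one could instead exhibit an explicit $(R/\NN_{\text{odd}}(T))$-regular sequence of length $n_0$ inside $\mathfrak{X}$, as was done for heights $0$ and $1$ in Lemma~\ref{lemma. max. reg. seq. of hh(T) < 3}, but the dimension route is cleaner.
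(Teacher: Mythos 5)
Your proposal is correct and follows essentially the same route as the paper: both arguments invoke the Cohen--Macaulayness from Corollary~\ref{cor. CM of ONI and SON} to replace depth by Krull dimension, and then compute that dimension as $|V_{\text{even}}| - n_2 = n_0$ using the fact that every minimal odd-TD-set has size $n_2$ (the paper observes that $V_2$ is itself a minimal odd-TD-set and appeals to unmixedness, while you cite Lemma~\ref{lem. |N(s) cap D| = 1}; either works). The detour through $\dim \calS_{even}(T) + 1$ is just a restatement of the same dimension count, so there is no substantive difference.
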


\begin{proof}
    Recall that $R = \kk[V_{\text{even}}]$.
    Since $T$ is WTD, every minimal odd-TDS of $T$ has the same size; moreover, they all have size $n_2$ since the set $V_2$ is a minimal odd-TDS of $T$.
    Thus we get
    \begin{align*}
    \depth(R/\NN_{\text{odd}}(T)) &= \dim(R/\NN_{\text{odd}}(T)) \\
    &= |V_{\text{even}}| - n_2 \\
    &= \sum_{i = 1}^{n_2} m_i\\
    &= \sum_{i = 1}^{n_1} m_i \tag{$n_1 = n_2$}\\
    &= n_0.
\end{align*}
\end{proof}

Now we exhibit a maximal $R/\NN_{\text{odd}}(T)$-regular sequence when $\hh(T) = 3$.

\begin{lemma}\label{lem. max reg. seq. for height 3 D.T}
    Let $\hh(T) = 3$.
    Set
    $$
    Z := \Set{u_i - \ell_{i,j}}{1 \leq i \leq n_1, 1 \leq j \leq m_i} \subseteq R
    $$
    Then $Z$ is a maximal $R/\NN_{\text{odd}}(T)$-regular sequence in $\mathfrak{X}$.
\end{lemma}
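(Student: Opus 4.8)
The plan is to show that $Z$ is a homogeneous system of parameters for the Cohen-Macaulay graded ring $R/\NN_{\text{odd}}(T)$; since every homogeneous system of parameters of a Cohen-Macaulay graded algebra is a regular sequence, this establishes that $Z$ is a maximal $R/\NN_{\text{odd}}(T)$-regular sequence in $\mathfrak{X}$. First I would record the relevant counts. Each element $u_i - \ell_{i,j}$ is a degree-$1$ form lying in $\mathfrak{X}$, and these forms are linearly independent: for fixed $i$ they involve the distinct variables $\ell_{i,1},\dots,\ell_{i,m_i}$, and across different $i$ the variable sets are disjoint. Since every leaf is adjacent to a unique support vertex, $V_0 = \bigcup_i \set{\ell_{i,1},\dots,\ell_{i,m_i}}$ is a disjoint union, so $|Z| = \sum_{i=1}^{n_1} m_i = n_0$. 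By Lemma~\ref{lem. dep = n_0 if hh(T) = 3} together with the Cohen-Macaulayness from Corollary~\ref{cor. CM of ONI and SON}, we have $\dim(R/\NN_{\text{odd}}(T)) = \depth(R/\NN_{\text{odd}}(T)) = n_0$, so the length of $Z$ already matches the Krull dimension; it then suffices to check that $\dim\big(R/(\NN_{\text{odd}}(T) + \gen{Z})\big) = 0$.

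The core of the argument is the explicit computation of this quotient, in the spirit of the proof of Lemma~\ref{lemma. max. reg. seq. of hh(T) < 3}. Passing to $R/\gen{Z}$ imposes the relations $\ell_{i,j} = u_i$ for all $i,j$, collapsing all leaves adjacent to $s_i$ together with $u_i$ to a single variable; since $Z$ consists of $n_0$ independent linear forms, this yields $R/\gen{Z} \cong \kk[u_1,\dots,u_{n_2}]$. I would then track the images of the generators of $\NN_{\text{odd}}(T)$. For each support vertex $s_i$ we have $N(s_i) = \set{u_i,\ell_{i,1},\dots,\ell_{i,m_i}}$, so the generator $X_{N(s_i)} = u_i\,\ell_{i,1}\cdots\ell_{i,m_i}$ maps to $u_i^{m_i+1}$, while each generator $X_{N(r_t)}$ with $r_t \in V_3$ maps to a squarefree monomial in the $u_k$. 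Consequently the image of $\NN_{\text{odd}}(T)$ contains $u_i^{m_i+1}$ for every $i$, so every variable is nilpotent in $R/(\NN_{\text{odd}}(T)+\gen{Z})$. Therefore this quotient is a finite-dimensional $\kk$-algebra, giving $\dim\big(R/(\NN_{\text{odd}}(T)+\gen{Z})\big) = 0$.

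Combining the two observations, $Z$ is a homogeneous system of parameters of $R/\NN_{\text{odd}}(T)$ of length equal to its Krull dimension $n_0$; since $R/\NN_{\text{odd}}(T)$ is Cohen-Macaulay, $Z$ is an $R/\NN_{\text{odd}}(T)$-regular sequence, and as its length equals $\depth(R/\NN_{\text{odd}}(T))$ it is maximal. I do not anticipate a genuine obstacle here; the only point requiring care is the bookkeeping in the quotient computation — verifying that the identifications $\ell_{i,j}=u_i$ turn each $V_1$-generator into the pure power $u_i^{m_i+1}$, which alone forces nilpotency of every variable, so that the $V_3$-generators, though present in the quotient ideal, play no role in the dimension count.
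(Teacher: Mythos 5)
Your proposal is correct and follows essentially the same route as the paper: pass to $R/\gen{Z}$ to identify each $\ell_{i,j}$ with $u_i$, observe that the image of $\NN_{\text{odd}}(T)$ in $\kk[u_1,\dots,u_{n_2}]$ contains the pure power $u_i^{m_i+1}=u_i^{|N(s_i)|}$ for every $i$ (so the quotient is zero-dimensional), and then invoke Cohen-Macaulayness together with $|Z|=n_0=\depth(R/\NN_{\text{odd}}(T))$ to conclude maximality. The only cosmetic difference is that you phrase the last step via the fact that a homogeneous system of parameters of a Cohen-Macaulay graded ring is a regular sequence, whereas the paper argues directly from the depth count; these are the same argument.
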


\begin{proof}
Set $R' := \kk[u_1,\dots,u_{n_2}]$.
Then we have
\begin{align*}
\frac{R}{\NN_{\text{odd}}(T) + \gen{Z}} &\cong \frac{R'}{\left(u_1^{|N(s_1)|}, \dots ,u_{n_2}^{|N(s_{n_1})|}, X_{N(r_1)}, \dots ,X_{N(r_{n_3})}\right)R'} \tag{\textasteriskcentered{}}
\end{align*}
because every leaf $\ell_{i,j}$ is identified with its closest height 2 vertex $u_i$ and $n_1 = |V_1| = |V_2| = n_2$ since $T$ is an WTD balanced tree of height 3. 
Set 
$$
\mathfrak{I} := \left(u_1^{|N(s_1)|}, \dots ,u_{n_1}^{|N(s_{n_1})|}, X_{N(r_1)}, \dots ,X_{N(r_{n_3})}\right)R'.
$$
Since $\mathfrak{I}$ contains positive powers of all the $u_i$'s, and the $X_{N(r_i)}$ are non-unit monomials, we get $\mrad(\mathfrak{I}) = (u_1,\dots,u_{n_1})R'$.
This implies that $\dim(R'/\mathfrak{I}) = 0$.
Since $R/\NN_{\text{odd}}(T)$ is Cohen-Macaulay of depth $n_0$ by Corollary~\ref{cor. CM of ONI and SON} and Lemma~\ref{lem. dep = n_0 if hh(T) = 3}, the condition
$$
|Z| = \sum_{i = 1}^{n_2}m_i = n_0,
$$
implies that $Z$ is a maximal regular sequence.
\end{proof}

\begin{example}\label{ex. Z set example}
    Let $T$ be the tree from Example~\ref{ex. vertex labeling of WTD delta trees} and let $R = \kk[V_{\text{even}}]$; see Figure~\ref{fig: vert. lab. tree with some coloring}, some vertices are colored for guidance.
\begin{figure}[ht]
    \centering
    \includegraphics{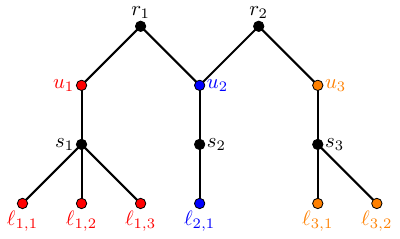}
    \caption{Tree $T$ from Example~\ref{ex. vertex labeling of WTD delta trees}}
    \label{fig: vert. lab. tree with some coloring}
\end{figure}    
    Then the maximal $R/\NN_{\text{odd}}(T)$-regular sequence from Lemma~\ref{lem. max reg. seq. for height 3 D.T} is
    \begin{align*}
            Z = &\set{\tcr{u_1 - \ell_{1,1}},\tcr{u_1 - \ell_{1,2}},\tcr{u_1 - \ell_{1,3}},\tcb{u_2 - \ell_{2,1}},\tco{u_3 - \ell_{3,1}},\tco{u_3 - \ell_{3,2}}}.    
    \end{align*}
    We have
    $$
    \NN_{\text{odd}}(T) = \gen{\tcr{u_1\ell_{1,1}\ell_{1,2}\ell_{1,3}}, \tcb{u_2\ell_{2,1}}, \tco{u_3\ell_{3,1}\ell_{3,2}},\tcr{u_1}\tcb{u_2},\tcb{u_2}\tco{u_3}}
    $$
    hence
    \begin{align*}
        \frac{R}{\NN_{\text{odd}}(T) + \gen{Z}}  &= \frac{\kk[\tcr{u_1},\tcb{u_2},\tco{u_3},\tcr{\ell_{1,1}},\tcr{\ell_{1,2}},\tcr{\ell_{1,3}},\tcb{\ell_{2,1}},\tco{\ell_{3,1}},\tco{\ell_{3,2}}]}{\gen{\tcr{u_1\ell_{1,1}\ell_{1,2}\ell_{1,3}}, \tcb{u_2\ell_{2,1}}, \tco{u_3\ell_{3,1}\ell_{3,2}},\tcr{u_1}\tcb{u_2},\tcb{u_2}\tco{u_3}}+\gen{Z}}\\\\
        &= \frac{\kk[\tcr{u_1},\tcb{u_2},\tco{u_3}]}{\gen{\tcr{u_1}^4,\tcb{u_2}^2,\tco{u_3}^3,\tcr{u_1}\tcb{u_2},\tcb{u_2}\tco{u_3}}}.
    \end{align*}
    The ideal $\gen{\tcr{u_1}^4,\tcb{u_2}^2,\tco{u_3}^3,\tcr{u_1}\tcb{u_2},\tcb{u_2}\tco{u_3}}$ is the ideal $\mathfrak{J}$ from the proof of Lemma~\ref{lem. max reg. seq. for height 3 D.T}. 
\end{example}

Now we compute the type of $R/\NN_{\text{odd}}(T)$ when $T$ is an WTD balanced tree.
We first take care of the cases when $\hh(T) < 3$.

\begin{theorem}\label{thm. type when hh(T) < 3}
    If $\hh(T) < 3$, then $\tp(R/\NN_{\text{odd}}(T)) = 1$.
\end{theorem}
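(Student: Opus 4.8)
The plan is to compute the Cohen-Macaulay type via the socle of an Artinian reduction. Recall the standard fact (see, e.g., \cite{MR1251956}) that if $R/\NN_{\text{odd}}(T)$ is Cohen-Macaulay and $Z$ is a maximal $(R/\NN_{\text{odd}}(T))$-regular sequence consisting of homogeneous elements of $\mathfrak{X}$, then modding out by $\gen{Z}$ preserves the type, so that
$$
\tp(R/\NN_{\text{odd}}(T)) = \dim_\kk\left(0 :_{R/(\NN_{\text{odd}}(T)+\gen{Z})} \mathfrak{X}\right),
$$
the $\kk$-dimension of the socle of the Artinian ring $R/(\NN_{\text{odd}}(T)+\gen{Z})$. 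Since $R/\NN_{\text{odd}}(T)$ is Cohen-Macaulay by Corollary~\ref{cor. CM of ONI and SON}, and since Lemma~\ref{lemma. max. reg. seq. of hh(T) < 3} already supplies an explicit maximal regular sequence $Z$ together with a description of the resulting quotient in each case, the problem reduces to a direct socle computation in the two cases $\hh(T)=0$ and $\hh(T)=1$.

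For $\hh(T) = 0$, the proof of Lemma~\ref{lemma. max. reg. seq. of hh(T) < 3}(1) gives $R/(\NN_{\text{odd}}(T)+\gen{Z}) \cong \kk[v]/\gen{v} \cong \kk$. The socle of the field $\kk$ is all of $\kk$, which is one-dimensional, so $\tp(R/\NN_{\text{odd}}(T)) = 1$.

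For $\hh(T) = 1$, the proof of Lemma~\ref{lemma. max. reg. seq. of hh(T) < 3}(2) yields the isomorphism
$$
\frac{R}{\NN_{\text{odd}}(T)+\gen{Z}} \cong \frac{\kk[\ell_1]}{\gen{\ell_1^{n_0}}}.
$$
This is an Artinian hypersurface quotient of a one-variable polynomial ring, hence Gorenstein; equivalently, one checks directly that its socle is spanned by the single class $\overline{\ell_1^{n_0-1}}$, since $\ell_1 \cdot \ell_1^{n_0-1} = \ell_1^{n_0} \equiv 0$ while $\ell_1 \cdot \ell_1^{j} = \ell_1^{j+1} \not\equiv 0$ for $j < n_0-1$. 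Thus the socle is one-dimensional and $\tp(R/\NN_{\text{odd}}(T)) = 1$.

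Because these two cases exhaust $\hh(T) < 3$, the claim follows. I expect no genuine obstacle here: all the homological content—Cohen-Macaulayness and the existence of a maximal regular sequence with an explicit Artinian reduction—was already established in Corollary~\ref{cor. CM of ONI and SON} and Lemma~\ref{lemma. max. reg. seq. of hh(T) < 3}, so the only remaining step is the elementary observation that both a field and a quotient of the form $\kk[x]/\gen{x^n}$ have a one-dimensional socle.
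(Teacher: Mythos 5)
Your proposal is correct and follows essentially the same route as the paper: both reduce modulo the explicit maximal regular sequence $Z$ from Lemma~\ref{lemma. max. reg. seq. of hh(T) < 3} (using the Cohen-Macaulayness from Corollary~\ref{cor. CM of ONI and SON} to ensure the type is preserved) and then observe that the resulting Artinian quotient is $\kk$ when $\hh(T)=0$ and $\kk[\ell_1]/\gen{\ell_1^{n_0}}$ when $\hh(T)=1$, each of type $1$. Your explicit socle computation for $\kk[\ell_1]/\gen{\ell_1^{n_0}}$ just makes transparent what the paper leaves as an evident final equality.
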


\begin{proof}
    First, let $\hh(T) = 0$; hence we write $V = \set{v}$.
    Then $R = \kk[v]$ and $\NN_{\text{odd}}(T) = 0$.
    Hence we have
    \begin{align*}
        \tp(R/\NN_{\text{odd}}(T)) &= \tp(R) \tag{$\NN_{\text{odd}}(T) = 0$}\\
        &= \tp(R/\gen{v}) = 1.
    \end{align*}
    Now suppose that $\hh(T) = 1$. 
    Let $Z$ be the maximal $R/\NN_{\text{odd}}(T)$-regular sequence from Lemma~\ref{lemma. max. reg. seq. of hh(T) < 3}.
    Then we have
    \begin{align*}
        \tp(R/\NN_{\text{odd}}(T)) &= \tp \left(\frac{R}{\NN_{\text{odd}}(T) + \gen{Z}} \right) \\
        &= \tp(\kk[\ell_{1,1}]/\gen{\ell_{1,1}^{m_1}}) \tag{Lemma~\ref{lemma. max. reg. seq. of hh(T) < 3} (1)}\\
        &= 1 
    \end{align*}
as desired.
\end{proof}

Now we consider $T$ with $\hh(T) = 3$.
The ideal $\mathfrak{I}$ from Lemma~\ref{lem. max reg. seq. for height 3 D.T} has a parametric decomposition in $\kk[u_1,\dots,u_{n_2}]$ since $\mrad(\mathfrak{I}) = \gen{u_1,\dots,u_{n_2}}$.
Lemma~\ref{lem. parametric decomp of I} gives us the decomposition of $\mathfrak{I}$ explicitly.

\begin{lemma}\label{lem. parametric decomp of I}
Assume that $\hh(T) = 3$.
Let $\mathfrak{J} \leq R' = \kk[u_1,\dots,u_{n_2}]$ be the ideal from the proof of Lemma~\ref{lem. max reg. seq. for height 3 D.T}.
Set 
$$
U := \left(u_1^{|N(s_1)|},\ \dots\ , u_{n_2}^{|N(s_{n_1})|}\right)R'.
$$
The irredundant parametric decomposition of $\mathfrak{I}$ in $R'$ is
$$
\mathfrak{I} = \bigcap_{D\ min} \left(\gen D + U\right)
$$
where the intersection is taken over all minimal $V_3$-TDSs of $T$.
\end{lemma}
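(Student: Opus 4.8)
The plan is to reduce the statement to a decomposition of the $V_3$-open neighborhood ideal together with a purely monomial interchange of sum and intersection. By its construction in Lemma~\ref{lem. max reg. seq. for height 3 D.T}, the ideal $\mathfrak{I}=\mathfrak{J}$ splits as
$$
\mathfrak{I} = U + \mathfrak{N}, \qquad \mathfrak{N} := \gen{X_{N(r_1)},\dots,X_{N(r_{n_3})}}R'.
$$
Since $T$ is balanced and each $r_j \in V_3$ has all of its neighbors in $V_2 = \set{u_1,\dots,u_{n_2}}$, only height-2 vertices can dominate $V_3$, so every minimal $V_3$-TD-set of $T$ is a subset of $V_2$ and each $\gen{D}$ below is an ideal of $R'$. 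Applying Theorem~\ref{thm. decomp of SON} with $S = V_3$ gives the irredundant decomposition $\NN_{V_3}(T) = \bigcap_{D\ min}\gen{D}$ in $R=A[V]$; as all the generators $X_{N(r_j)}$ and all the ideals $\gen{D}$ involve only the variables $u_1,\dots,u_{n_2}$, this identity and its irredundancy restrict to the subring $R'$, yielding $\mathfrak{N} = \bigcap_{D\ min}\gen{D}$ in $R'$.

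The heart of the argument is the monomial-wise distributive identity
$$
U + \bigcap_{D\ min}\gen{D} = \bigcap_{D\ min}\bigl(U + \gen{D}\bigr).
$$
Because $U$, each $\gen{D}$, and all their sums and intersections are monomial ideals, I will verify this on a single monomial $m$: one has $m \in U + \gen{D}$ iff $m \in U$ or $m \in \gen{D}$, and $m \in \bigcap_D\gen{D}$ iff $m \in \gen{D}$ for every $D$. The left side then reads ``$m \in U$, or $m \in \gen{D}$ for all $D$,'' while the right side reads ``for every $D$, $m \in U$ or $m \in \gen{D}$''; these are equivalent, since if $m \notin U$ the right-hand condition forces $m \in \gen{D}$ for every $D$. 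Combining with the first paragraph gives $\mathfrak{I} = \bigcap_{D\ min}(\gen{D} + U)$. Writing $D = \set{u_i : i \in A}$, each term becomes $\gen{D} + U = \gen{u_i\ (i\in A),\ u_j^{|N(s_j)|}\ (j \notin A)}$, which is generated by pure powers of the variables, i.e. a parameter ideal, so the decomposition is parametric.

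It remains to establish irredundancy, and this is where the geometry of $T$ enters through the inequality $|N(s_i)| = m_i + 1 \geq 2$ for every support vertex $s_i$ (each $s_i$ is adjacent to $u_i$ and to at least one leaf). Fix a minimal $V_3$-TD-set $D_0$. Since $\mathfrak{N} = \bigcap_{D\ min}\gen{D}$ is irredundant in $R'$ and each $\gen{D}$ is a monomial prime, the ideal $J := \bigcap_{D \neq D_0}\gen{D}$ is squarefree with $J \not\subseteq \gen{D_0}$; hence some \emph{squarefree} minimal generator $m_0$ of $J$ fails to lie in $\gen{D_0}$. Being squarefree, every variable occurs in $m_0$ to exponent at most $1 < |N(s_j)|$, so $m_0 \notin U$, whence $m_0 \notin \gen{D_0} + U$, while $m_0 \in \gen{D} \subseteq \gen{D} + U$ for every $D \neq D_0$. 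Thus the $D_0$-term cannot be dropped, and as $D_0$ was arbitrary the decomposition is irredundant.

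The main obstacle I anticipate is not the distributive step, which is automatic for monomial ideals, but two pieces of bookkeeping: transferring the decomposition of $\NN_{V_3}(T)$ faithfully from $R$ down to $R'$, and guaranteeing that the witness monomials for irredundancy avoid $U$. The latter is exactly what $|N(s_i)| \geq 2$ secures, by permitting squarefree witnesses.
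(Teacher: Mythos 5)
Your proof is correct and follows essentially the same route as the paper's: identify $\left(X_{N(r_1)},\dots,X_{N(r_{n_3})}\right)R'$ as the $V_3$-open neighborhood ideal, decompose it via Theorem~\ref{thm. decomp of SON}, and then distribute $U$ over the intersection. You go further than the paper in two places it leaves implicit --- the monomial-wise verification of $U+\bigcap_D\gen{D}=\bigcap_D(U+\gen{D})$, and the irredundancy of the resulting parametric decomposition via squarefree witness monomials that avoid $U$ because $|N(s_j)|\geq 2$ --- and both additions are sound.
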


\begin{proof}
Notice that the ideal $\left( X_{N(r_1)},\dots,X_{N(r_{n_3})} \right)R'$ is the $V_3$-open neighborhood ideal of $T$ since $V_3 = \set{r_1,\dots,r_{n_3}}$.
Thus by Theorem~\ref{thm. decomp of SON}, we get
$$
\left( X_{N(r_1)},\dots,X_{N(r_{n_3})} \right)R' = \bigcap_{D\ min} \gen D
$$
where the intersection is taken over all minimal $V_3$-TDS of $T$.
Thus we get
$$
\mathfrak{I} = \left( X_{N(r_1)},\dots,X_{N(r_{n_3})} \right)R'+ U = \left(\bigcap_{D\ min} \gen D\right) + U =  \bigcap_{D\ min} (\gen D+U)
$$
as desired.
\end{proof}

\begin{example}\label{ex. decomp. of J}
Let $T = (V,E)$ be the tree from Example~\ref{ex. vertex labeling of WTD delta trees} and let $\mathfrak{J}$ be the ideal from Example~\ref{ex. Z set example}.
Setting 
$$
U = \gen{u_1^{|N(s_1)|},u_2^{|N(s_2)|},u_3^{|N(s_3)|}} = \gen{u_1^{4},u_2^2,u_3^3},
$$
we get
\begin{align*}
    \mathfrak{J} &= \gen{u_1^4,u_2^2,u_3^3,u_1u_2,u_2u_3}\\ 
    &= \gen{u_1^4,u_2^2,u_3^3, u_1,u_3} \cap \gen{u_1^4,u_2^2,u_3^3,u_2}\\
    &= \left(\gen{u_1,u_3} + U\right) \cap \left(\gen{u_2} + U \right).
\end{align*}
Sets $\set{u_1,u_3}$ and $\set{u_2}$ are minimal $V_3$-TDSs of $T$.
\end{example}

Now we can describe the $\tp(R/\NN_{\text{odd}}(T))$ for an WTD balanced tree of any height using minimal $V_3$-TDSs.

\begin{theorem}\label{thm. type of WTD delta trees}
    Let $R = \kk[V_{\text{even}}]$.
    Then
    $$
    \tp(R/\NN_{\text{odd}}(T)) = ``\mbox{number of minimal $V_3$-TDSs of } T."
    $$
\end{theorem}

\begin{proof}
    If $\hh(T) < 3$, then the empty set is the unique minimal $V_3$-TDS, so
    \begin{align*}
    \tp(R/\NN_{\text{odd}}(T)) &= 1 \tag{Theorem~\ref{thm. type when hh(T) < 3}}\\
    &= |\set{\emptyset}|\\
    &= ``\mbox{number of minimal $V_3$-TDSs of } T."
    \end{align*}
    Now suppose that $\hh(T) = 3$.
    Let $Z$, $R'$, and $\mathfrak{I}$ as in Lemma~\ref{lem. max reg. seq. for height 3 D.T}.
    Then we have
    \begin{align*}
        \tp(R/\NN_{\text{odd}}(T)) = \tp\left(\frac{R}{\NN_{\text{odd}}(T) + \gen{Z}}\right) \hspace{-5cm}\\
        &= \tp \left(R'/\mathfrak{I} \right) \tag{(\textasteriskcentered{}) in Lemma~\ref{lem. max reg. seq. for height 3 D.T}}\\
        &= ``\mbox{number of ideals in the parametric decomp. of }\mathfrak{I}" \\
        &= ``\mbox{number of minimal $V_3$-TDSs of } T" 
    \end{align*}
as desired.
\end{proof}

\begin{example}
    Let $T = (V,E)$ be the tree from Example~\ref{ex. vertex labeling of WTD delta trees}.
    Then by Theorem~\ref{thm. type of WTD delta trees} with Example~\ref{ex. decomp. of J}, we get
    $$
    \tp(\kk[V_{\text{even}}]/\NN_{\text{odd}}(T)) = ``\mbox{number of minimal $V_3$-TDS of } T" = 2.
    $$
\end{example}

\subsection{Cohen-Macaulay Type of WTD Trees}

\begin{assumptions}
Let $T = (V,E)$ be an WTD tree and let $T_\R$ and $T_\B$ be the two interior trees (forests) of $T$ derived from a 2-coloring $\chi: V \to \set{\R,\B}$.
Set $ N_\B(v) := N_{T_\B}(v)$ and $N_\R(v) := N_{T_\R}(v)$ for all $v \in V$.    
\end{assumptions}

Theorem~\ref{Decomposition of ONI into three ideals} gives a connection between open neighborhood ideal of $T$ and the odd open neighborhood ideals of its interior graphs.

\begin{theorem}\label{Decomposition of ONI into three ideals}
We have
\begin{align*}
\NN(T) &= \NN_{\text{odd}}(T_\B)R + \NN_{\text{odd}}(T_\R)R + \gen{V_1(T)}\\ 
&= \NN_{\text{odd}}(T_\B)R + \NN_{\text{odd}}(T_\R)R + (V_1(T))R.
\end{align*}
\end{theorem}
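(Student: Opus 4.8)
The plan is to prove the one substantive identity
$\NN(T) = \NN_{\text{odd}}(T_\B)R + \NN_{\text{odd}}(T_\R)R + \gen{V_1(T)}$
by establishing the two containments separately, after first isolating a single bridging observation about open neighborhoods inside the interior graphs. The second displayed equality is merely the notational identification of $\gen{V_1(T)}$ with $(V_1(T))R$, so there is nothing to prove there.

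The key preliminary step I would carry out is this: for every odd-height vertex $w \in V_{odd}(T_\B)$ one has $N_{T_\B}(w) = N_T(w)$, and symmetrically $N_{T_\R}(w) = N_T(w)$ for $w \in V_{odd}(T_\R)$. I would derive this directly from Definition~\ref{def. interior graphs}. By Fact~\ref{fact. odd-TD-set is RD-set of TB} such a $w$ is red ($V_{odd}(T_\B) = V_\R(T_\B)$) and survives deletion of $N[V_1 \cap \chi^{-1}(\B)]$. Every neighbor $u$ of $w$ in $T$ is blue, and a blue vertex can lie in $N[V_1 \cap \chi^{-1}(\B)]$ only by being a blue support vertex, since its own neighbors (being red) cannot. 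But if a neighbor $u$ of $w$ were a blue support vertex, then $w \in N(u) \subseteq N[V_1 \cap \chi^{-1}(\B)]$, contradicting $w \in V(T_\B)$. Hence no neighbor of $w$ is deleted; as $T_\B$ is an induced subgraph this forces $N_{T_\B}(w) = N_T(w)$, and therefore $X_{N_{T_\B}(w)} = X_{N_T(w)}$. This equality of monomials is the bridge between the two sides.

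Granting this, the inclusion of the right-hand side in $\NN(T)$ is then immediate: each support vertex $s \in V_1(T)$ equals $X_{N_T(\ell)}$ for a leaf $\ell$ adjacent to it (as $N_T(\ell) = \set{s}$), and each interior generator $X_{N_{T_\B}(w)}$, $X_{N_{T_\R}(w)}$ equals an $\NN(T)$-generator $X_{N_T(w)}$ by the preliminary step. For the reverse inclusion I would run over the generators $X_{N_T(v)}$, $v \in V$, through a color dichotomy. Taking $v$ blue without loss of generality, either $v$ is adjacent to a red support vertex $s' \in V_1 \cap \chi^{-1}(\R)$, whence $s' \mid X_{N_T(v)}$ places the generator in $\gen{V_1(T)}$; or $v$ is adjacent to no red support vertex, whence $v \notin N[V_1 \cap \chi^{-1}(\R)]$, so $v \in V(T_\R)$ and, being blue, $v \in V_\B(T_\R) = V_{odd}(T_\R)$. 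In this second case the preliminary step gives $X_{N_T(v)} = X_{N_{T_\R}(v)} \in \NN_{\text{odd}}(T_\R)R$. The two alternatives are exhaustive and mutually exclusive, and the red case is symmetric, so every generator of $\NN(T)$ lands in the right-hand side.

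The main obstacle is the preliminary step: one must check carefully, leaning on the bipartite structure supplied by the $2$-coloring, that the deletions defining $T_\B$ and $T_\R$ never remove a neighbor of a surviving opposite-color vertex, so that the open neighborhood of an odd-height vertex is computed identically in the subgraph and in $T$. Once this color bookkeeping is pinned down the two containments reduce to routine generator comparisons. I would also note in passing that unmixedness of $T$ is never actually invoked, so the identity in fact holds for any $2$-colored tree possessing at least one edge.
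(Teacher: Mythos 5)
Your proposal is correct and follows essentially the same route as the paper: both containments are checked on generators, with the key bridge being that $N_{T_\B}(w)=N_T(w)$ for $w\in V_{\text{odd}}(T_\B)$ (which the paper establishes by contradiction inside the reverse inclusion) together with the color dichotomy on whether a generator's vertex is adjacent to an opposite-color support vertex, i.e., whether it survives into the relevant interior graph. Your side remark that unmixedness is never genuinely invoked is also accurate: the paper cites Theorems~\ref{thm. char. of unmixed delt. trees} and~\ref{thm. char. of unmixed trees} only to conclude $\chi(v)=\R$, which already follows from Fact~\ref{fact. odd-TD-set is RD-set of TB}.
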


\begin{proof}
($\subseteq$) It suffices to show that $X_{N_T(v)} \in \NN_{\text{odd}}(T_\B)R + \NN_{\text{odd}}(T_\R)R + \gen{V_1(T)}$ for all $v \in V$.
Let $v \in V$.
By symmetry, assume that $\chi(v) = R$.

\setlength{\leftskip}{5mm}

\noindent\emph{Case 1:} $v \not\in V(T_\B)$.  
Then by Definition~\ref{def. interior graphs}, there exists some blue support vertex $s \in V(T)$ that is adjacent to $v$.
Suppose that $v$ is a leaf in $T$. 
Then $N_T(v) = \set{s}$.
Since $s \in \gen{V_1(T)}$, we have $X_{N_T(v)} = s \in \gen{V_1(T)}$.
Now suppose that $v$ is not a leaf.
Since $s$ is a support vertex, there exists some leaf $\ell \in V(T)$ that is adjacent to $s$.
Then $X_{N_T(\ell)} = s$ divides $X_{N_T(v)}$ since $s \in N_T(v)$.
Hence $X_{N_T(v)}$ is a redundant generator of $\NN(T)$.

\vspace{2mm}

\noindent\emph{Case 2:} $v \in V(T_\B)$.
Then $N_{T_\B}(v) \subseteq N_T(v)$.
Thus $X_{N_\B(v)}|X_{N_T(v)}$, so $X_{N_T(v)} \in \NN(T_\B)$.

\setlength{\leftskip}{0mm}

\vspace{2mm}

\noindent($\supseteq$) Every support vertex of $T$ is a generator of $\NN(T)$, being the open neighborhood of each leaf it is adjacent to, so we get $\gen{V_1(T)} \subseteq \NN(T)$.
To complete the proof, by symmetry, we show that $\NN(T) \supseteq \NN_{\text{odd}}(T_\B)$.
Note that all we need to show is that $X_{N_\B(v)} \in \NN(T)$ for all $v \in V_{\text{odd}}(T_\B)$ as this shows that the generators of $\NN_{\text{odd}}(T_\B)$ are in $\NN(T)$.
So, let $v \in V_{\text{odd}}(T_\B)$.
We show that $N_\B(v) = N_T(v)$ which will imply that $X_{N_\B(v)} = X_{N_T(v)} \in \NN(T)$. 
To this end, assume by way of contradiction that $N_\B(v) \subsetneq N_T(v)$.
Since $\hh(v) \in \{1,3\}$ by the definition of $V_{\text{odd}}(T_\B)$ along with Theorem~\ref{thm. char. of WTD delt. trees} and \ref{thm. char. of WTD trees}, we get $\chi(v) = \R$ as $v \in V(T_\B)$.
Since $N_\B(v) \subsetneq N_T(v)$, there exists a vertex $u \in V$ such that $u \in N_T(v)\setminus V(T_\B)$ and $uv \in E(T)$.
Since $\chi(v) = R$, we have $\chi(u) = B$.
Since $u \not\in V(T_\B)$, $u$ must be a vertex that is deleted from $T$ while constructing $T_\B$.
By definition of $T_\B$, $u$ must be a support vertex since $\chi(u) = B$.
But, this implies that $v \not\in V(T_\B)$ since $v$ must be deleted from $T$ while constructing $T_\B$ as $v \in N_T(u)$, a contradiction.
\end{proof}

\begin{example}
    Let $T$, $T_\B$, and $T_\R$ be the graphs from Example~\ref{ex. decomp. of a tree} and set $R = \kk[V]$.
    Then we have
    \begin{align*}
        \NN_{\text{odd}}(T_\B) &= \gen{\tcb{\ell_3u_3}, \tcb{\ell_4u_4},\tcb{\ell_5u_5},\tcb{u_3u_4},\tcb{u_4u_5}},\\
        \NN_{\text{odd}}(T_\R) &= \gen{\tcr{\ell_1u_1},\tcr{\ell_2u_2},\tcr{u_1u_2}},\\
        (V_1(T))R &= \gen{\tcb{s_1},\tcb{s_2},\tcr{s_3},\tcr{s_4},\tcr{s_5}}.
    \end{align*}
    The variables are colored using the same coloring of $T$ from Example~\ref{ex. decomp. of a tree}.
    The open neighborhood ideal of $T$ is given by
    \begin{align*}
        \NN(T) &= \langle s_1,s_2,s_3,s_4,s_5,\ell_1u_1,\ell_2u_2,\ell_3u_3,\ell_4u_4,\ell_5u_5,s_1r_1,s_2u_3r_1,s_3u_2r_2,s_4r_2r_3,\\
        &\ \ \ \ \ s_5r_3,u_1u_2,u_3u_4,u_4u_5\rangle\\
        &= \gen{\tcb{s_1},\tcb{s_2},\tcr{s_3},\tcr{s_4},\tcr{s_5},\tcr{\ell_1u_1},\tcr{\ell_2u_2},\tcb{\ell_3u_3},\tcb{\ell_4u_4},\tcb{\ell_5u_5},\tcr{u_1u_2},\tcb{u_3u_4},\tcb{u_4u_5}}.
    \end{align*}
    Notice that the minimal set of generators of $\NN(T)$ is exactly the union of the generating sets for $\NN_{\text{odd}}(T_\B)$, $\NN_{\text{odd}}(T_\R)$, and $(V_1(T))R$.
    Thus we get
    $$
    \NN(T) = \NN_{\text{odd}}(T_\B)+\NN_{\text{odd}}(T_\R)+(V_1(T))R.
    $$
\end{example}

By the construction of the interior graphs $T_\B$ and $T_\R$, we have $\NN_{\text{odd}}(T_\B) \subseteq (\chi^{-1}(B) \setminus V_1(T))R$ and $\NN_{\text{odd}}(T_\R) \subseteq (\chi^{-1}(R)\setminus V_1(T))R$; that is, the variables of the generators of $\NN_{\text{odd}}(T_\B)$ and $\NN_{\text{odd}}(T_\R)$ are blue and red, respectively.
Hence the ideals $\NN_{\text{odd}}(T_\B)$, $\NN_{\text{odd}}(T_\R)$, and $(V_1(T))R$ use pairwise disjoint sets of variables for their generators.

The following fact is used for Corollary~\ref{cor. CMness and depth of Delta-forests} which shows that the odd open neighborhood ideal of an WTD balanced forest is also Cohen-Macaulay. 
The depth computation is used in Theorem~\ref{thm. max. reg. seq. and type of Delta-forest}.

\begin{fact}\label{fact. delta forest are CM}
    Let $T_1$ and $T_2$ be WTD balanced trees with $V(T_1) \cap V(T_2) = \emptyset$.
    Let $T = (V(T_1) \cup V(T_2), E(T_1) \cup E(T_2))$.
    Then $\kk[V_{\text{even}}(T)]/\NN_{\text{odd}}(T)$ is Cohen-Macaulay over any field, and
    $$
    \depth\left(\frac{\kk[V_{\text{even}}(T)]}{\NN_{\text{odd}}(T)}\right) = \depth\left(\frac{\kk[V_{\text{even}}(T_1)]}{\NN_{\text{odd}}(T_1)}\right) + \depth\left(\frac{\kk[V_{\text{even}}(T_2)]}{\NN_{\text{odd}}(T_2)}\right).
    $$
\end{fact}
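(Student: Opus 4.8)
The plan is to recognize $T$ as an unmixed balanced forest with two connected components and then invoke the shellability and Stanley--Reisner machinery already developed, together with the additivity of Krull dimension under disjoint unions. Since $T_1$ and $T_2$ are unmixed balanced trees on disjoint vertex sets and $T$ has no edges between them, heights are computed within each component; hence $V_{\text{even}}(T) = V_{\text{even}}(T_1) \sqcup V_{\text{even}}(T_2)$, $V_{\text{odd}}(T) = V_{\text{odd}}(T_1) \sqcup V_{\text{odd}}(T_2)$, and $N_T(v) = N_{T_i}(v)$ whenever $v \in V(T_i)$. Consequently every generator $X_{N_T(v)}$ of $\NN_{\text{odd}}(T)$ is a generator of exactly one of $\NN_{\text{odd}}(T_1)$ or $\NN_{\text{odd}}(T_2)$, and these two ideals involve pairwise disjoint sets of variables. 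Moreover, as recorded in the proof of Corollary~\ref{cor. even stable complex of delta forest is shellable}, a set $D$ is a minimal odd-TD-set of $T$ if and only if $D = D_1 \cup D_2$ with each $D_i$ a minimal odd-TD-set of $T_i$; in particular $T$ is itself an unmixed balanced forest.

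For Cohen-Macaulayness I would argue exactly as in Corollary~\ref{cor. CM of ONI and SON}. Since $T$ is an unmixed balanced forest, Corollary~\ref{cor. even stable complex of delta forest is shellable} shows that $\calS_{even}(T)$ is shellable, and hence its Stanley--Reisner ring is Cohen-Macaulay over any field. Next I would check that $\NN_{\text{odd}}(T)$ is the Stanley--Reisner ideal of $\calS_{even}(T)$: by Theorem~\ref{thm. decomp of SON} applied with $S = V_{\text{odd}}(T)$ we have $\NN_{\text{odd}}(T) = \bigcap_{D} \gen{D}$, the intersection over all minimal odd-TD-sets $D$, while the Stanley--Reisner ideal of $\calS_{even}(T)$ is the intersection of the $\gen{D}$ over the complements $D = V_{\text{even}}(T) \setminus F$ of the facets $F$ of $\calS_{even}(T)$, which are precisely the minimal odd-TD-sets. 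These two intersections coincide, so $\kk[V_{\text{even}}(T)]/\NN_{\text{odd}}(T)$ is the Stanley--Reisner ring of a shellable complex and is therefore Cohen-Macaulay. The same reasoning gives the Cohen-Macaulayness of $\kk[V_{\text{even}}(T_i)]/\NN_{\text{odd}}(T_i)$ for $i = 1, 2$, which is Corollary~\ref{cor. CM of ONI and SON}.

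For the depth formula I would pass to Krull dimension. Because all three quotient rings are Cohen-Macaulay, their depths equal their Krull dimensions. Since $\calS_{even}(T)$ is pure (the forest being unmixed), the dimension of its Stanley--Reisner ring equals the common facet size $|V_{\text{even}}(T)| - |D|$, where $D$ is any minimal odd-TD-set of $T$, and the analogous formula holds for $T_1$ and $T_2$. Writing $D = D_1 \cup D_2$ as above and using $|V_{\text{even}}(T)| = |V_{\text{even}}(T_1)| + |V_{\text{even}}(T_2)|$ together with $|D| = |D_1| + |D_2|$, we obtain
$$
\dim \frac{\kk[V_{\text{even}}(T)]}{\NN_{\text{odd}}(T)} = \bigl(|V_{\text{even}}(T_1)| - |D_1|\bigr) + \bigl(|V_{\text{even}}(T_2)| - |D_2|\bigr),
$$
which is exactly the sum of the two component dimensions, and hence of the two component depths. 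Equivalently, one can deduce additivity of dimension from the identity $\calS_{even}(T) = \calS_{even}(T_1) * \calS_{even}(T_2)$ established in the proof of Corollary~\ref{cor. even stable complex of delta forest is shellable} together with $\dim(\Delta' * \Delta'') = \dim \Delta' + \dim \Delta'' + 1$.

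The main obstacle is not any single hard estimate but ensuring the two bookkeeping claims are airtight: that $\NN_{\text{odd}}(T)$ really is the Stanley--Reisner ideal of $\calS_{even}(T)$ for a forest, and that minimal odd-TD-sets of $T$ decompose as disjoint unions of minimal odd-TD-sets of the components. Both follow cleanly from the absence of edges between $T_1$ and $T_2$, but they must be invoked explicitly so that the dimension count and the Stanley--Reisner identification are valid; everything else is a direct citation of the shellability results established above.
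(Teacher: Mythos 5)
The paper records this statement as a \emph{Fact} and supplies no proof of its own, so there is no official argument to match against; your write-up is a correct way to fill the gap, and it stays entirely within the paper's toolbox. Your route goes through the simplicial side: $T$ is an unmixed balanced forest, so $\calS_{even}(T)$ is shellable by Corollary~\ref{cor. even stable complex of delta forest is shellable}, the identification $\NN_{\text{odd}}(T)=I_{\calS_{even}(T)}$ follows from Theorem~\ref{thm. decomp of SON} exactly as in the proof of Corollary~\ref{cor. CM of ONI and SON}, and the depth formula reduces to additivity of facet sizes under the join $\calS_{even}(T)=\calS_{even}(T_1)*\calS_{even}(T_2)$, using purity and depth $=$ dimension for Cohen--Macaulay rings. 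All the bookkeeping you flag (splitting of $V_{\text{even}}$, of the generators, and of minimal odd-TD-sets across components) is correct because there are no edges between $T_1$ and $T_2$; one small imprecision is that unmixedness of the forest $T$ is about minimal TD-sets rather than odd-TD-sets, but it follows immediately from the hypothesis that each $T_i$ is unmixed, so nothing breaks. The justification the paper most plausibly has in mind is the algebraic one it uses later in Theorem~\ref{thm. max. reg. seq. and type of Delta-forest}: since the two ideals live in disjoint sets of variables, $\kk[V_{\text{even}}(T)]/\NN_{\text{odd}}(T)\cong\bigl(\kk[V_{\text{even}}(T_1)]/\NN_{\text{odd}}(T_1)\bigr)\otimes_\kk\bigl(\kk[V_{\text{even}}(T_2)]/\NN_{\text{odd}}(T_2)\bigr)$, and Cohen--Macaulayness and depth are additive under such tensor products of graded $\kk$-algebras. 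That route is shorter and needs only Corollary~\ref{cor. CM of ONI and SON} for the factors; yours has the advantage of being self-contained within the combinatorial machinery already proved and of making the dimension count explicit. Either is acceptable.
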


\begin{corollary}\label{cor. CMness and depth of Delta-forests}
    Let $T$ be an WTD balanced forest with connected components $T_1$,$\dots$,$T_c$.
    Then $\kk[V_{\text{even}}(T)]/\NN_{\text{odd}}(T)$ is Cohen-Macaulay and
    $$
    \depth\left(\frac{\kk[V_{\text{even}}(T)]}{\NN_{\text{odd}}(T)}\right) = \sum_{i = 1}^c \depth\left(\frac{\kk[V_{\text{even}}(T_i)]}{\NN_{\text{odd}}(T_i)}\right).
    $$
\end{corollary}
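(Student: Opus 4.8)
The plan is to reduce the general forest statement to the two-component case recorded in Fact~\ref{fact. delta forest are CM} and to run an induction on the number $c$ of connected components, with the single-tree case supplied by Corollary~\ref{cor. CM of ONI and SON}. The structural observation underpinning everything is that, because the components $T_1,\dots,T_c$ have pairwise disjoint vertex sets, every generator $X_{N_T(v)}$ of $\NN_{\text{odd}}(T)$ with $v\in V_{\text{odd}}(T_i)$ involves only vertices of $T_i$; hence $\NN_{\text{odd}}(T)=\sum_{i=1}^c\NN_{\text{odd}}(T_i)\,\kk[V_{\text{even}}(T)]$ is a sum of ideals in pairwise disjoint sets of variables, and $\kk[V_{\text{even}}(T)]/\NN_{\text{odd}}(T)$ is the tensor product over $\kk$ of the quotients $\kk[V_{\text{even}}(T_i)]/\NN_{\text{odd}}(T_i)$.

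For the induction, the base case $c=1$ is exactly Corollary~\ref{cor. CM of ONI and SON}, and the depth identity is vacuous there. For the inductive step I would write $T=T'\sqcup T_c$, where $T'$ is the subforest with components $T_1,\dots,T_{c-1}$; note $V(T')\cap V(T_c)=\emptyset$. The inductive hypothesis makes $\kk[V_{\text{even}}(T')]/\NN_{\text{odd}}(T')$ Cohen-Macaulay with $\depth$ equal to $\sum_{i=1}^{c-1}\depth\bigl(\kk[V_{\text{even}}(T_i)]/\NN_{\text{odd}}(T_i)\bigr)$, while Corollary~\ref{cor. CM of ONI and SON} makes $\kk[V_{\text{even}}(T_c)]/\NN_{\text{odd}}(T_c)$ Cohen-Macaulay. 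Feeding these two factors into Fact~\ref{fact. delta forest are CM} yields that $\kk[V_{\text{even}}(T)]/\NN_{\text{odd}}(T)$ is Cohen-Macaulay and that its depth is the sum of the depths of the two factors; combining this with the inductive hypothesis telescopes the depth to $\sum_{i=1}^c\depth\bigl(\kk[V_{\text{even}}(T_i)]/\NN_{\text{odd}}(T_i)\bigr)$, closing the induction.

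The one point that needs care --- and the main obstacle --- is that Fact~\ref{fact. delta forest are CM} is phrased for two \emph{trees}, whereas the inductive step pairs the \emph{forest} $T'$ with the tree $T_c$. I would resolve this by observing that the proof of Fact~\ref{fact. delta forest are CM} uses only two features of its two inputs: the disjointness of their variable sets, which gives the tensor-product identification above, and the Cohen-Macaulayness of each factor; connectedness of $T_1$ and $T_2$ is never used. Consequently the same argument applies verbatim with the tree $T_1$ replaced by the (already Cohen-Macaulay) forest $T'$, which is precisely what the inductive step requires. Alternatively, one can bypass the issue by invoking the standard fact that a tensor product $A\otimes_\kk B$ of finitely generated $\kk$-algebras is Cohen-Macaulay if and only if both factors are, with $\depth(A\otimes_\kk B)=\depth A+\depth B$; applied to $A=\kk[V_{\text{even}}(T')]/\NN_{\text{odd}}(T')$ and $B=\kk[V_{\text{even}}(T_c)]/\NN_{\text{odd}}(T_c)$ this delivers both conclusions at once.
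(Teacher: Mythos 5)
Your proposal is correct and matches the paper's own proof, which is literally ``Induct on $c$, and apply Lemma~\ref{fact. delta forest are CM}.'' You go further than the paper by explicitly addressing the mismatch between the two-tree hypothesis of Fact~\ref{fact. delta forest are CM} and the forest-plus-tree pairing needed in the inductive step --- a gap the paper silently elides --- and your resolution (disjointness of variable sets plus the tensor-product characterization of Cohen-Macaulayness and depth over $\kk$) is exactly what makes the induction go through.
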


\begin{proof}
    Induct on $c$, and apply Lemma~\ref{fact. delta forest are CM}.
\end{proof}

Next, we state a result on a maximal $\kk[V_{\text{even}}]/\NN_{\text{odd}}(T)$-regular sequence of an WTD balanced forest $T$ that is used in Theorem~\ref{thm. CM type of WTD trees}.

\begin{theorem}\label{thm. max. reg. seq. and type of Delta-forest}
    Let $T$ be an WTD balanced forest with connected components $T_1,\dots,T_c$. 
    Let $R = \kk[V_{\text{even}}]$ and $R_i = \kk[V_{\text{even}}(T_i)]$ for all $i \in [c]$.
    For each $i \in [c]$, using Notation~\ref{notation. vertex labling} for each $T_i$, let $Z_i$ be the maximal regular sequence constructed in Lemmas~\ref{lemma. max. reg. seq. of hh(T) < 3} and~\ref{lem. max reg. seq. for height 3 D.T}.
    Set $Z = \bigcup_{i = 1}^c Z_i$. 
    Then 
    \begin{itemize}
        \item[(1)]$Z$ is a maximal $R/\NN_{\text{odd}}(T)$-regular sequence in $\gen{V_{\text{even}}}$.
        \item[(2)] $\tp(R/\NN_{\text{odd}}(T)) =``\mbox{number of minimal $V_3$-TDSs in }T."$
    \end{itemize}
\end{theorem}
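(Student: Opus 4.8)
The plan is to exploit that the connected components $T_1,\dots,T_c$ are pairwise vertex-disjoint, so everything factors as a tensor product over $\kk$. Write $M_i := R_i/\NN_{\text{odd}}(T_i)$ and $M := R/\NN_{\text{odd}}(T)$. Since $T_i$ is a connected component of $T$, we have $N_T(v) = N_{T_i}(v)$ for every $v \in V(T_i)$, so each generator $X_{N(v)}$ of $\NN_{\text{odd}}(T)$ lies in the subring generated by the component containing $v$; hence $R = R_1 \otimes_\kk \cdots \otimes_\kk R_c$ and $\NN_{\text{odd}}(T) = \sum_{i=1}^c \NN_{\text{odd}}(T_i)R$, giving a $\kk$-algebra isomorphism $M \cong M_1 \otimes_\kk \cdots \otimes_\kk M_c$. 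The whole argument then consists of transporting the per-component results (Lemmas~\ref{lemma. max. reg. seq. of hh(T) < 3} and~\ref{lem. max reg. seq. for height 3 D.T}, Theorem~\ref{thm. type of unmixed delta trees}, and Corollary~\ref{cor. CMness and depth of Delta-forests}) across this decomposition.

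For (1), first note that each $Z_i \subseteq \gen{V_{\text{even}}(T_i)} \subseteq \gen{V_{\text{even}}}$, so $Z \subseteq \gen{V_{\text{even}}}$. Processing the components in order, at the stage where $Z_i$ is to be adjoined the current ring is $(M_1/\gen{Z_1}) \otimes_\kk \cdots \otimes_\kk (M_{i-1}/\gen{Z_{i-1}}) \otimes_\kk M_i \otimes_\kk \cdots \otimes_\kk M_c$, which is a free (hence faithfully flat) module over its $i$-th tensor factor $M_i$; since $Z_i$ is $M_i$-regular by the cited lemmas, it remains regular here, and modding out by it replaces the $i$-th factor by $M_i/\gen{Z_i}$. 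Iterating shows $Z = \bigcup_i Z_i$ is an $M$-regular sequence. Finally $|Z| = \sum_i |Z_i| = \sum_i \depth(M_i) = \depth(M)$ by Corollary~\ref{cor. CMness and depth of Delta-forests}, and a regular sequence whose length equals the depth is maximal, proving (1).

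For (2), since $M$ is Cohen--Macaulay (Corollary~\ref{cor. CMness and depth of Delta-forests}) and $Z$ is a maximal regular sequence, the type is computed from the Artinian reduction and $\tp(M) = \tp(M/\gen{Z})$; by the tensor decomposition $M/\gen{Z} \cong \bigotimes_{i=1}^c M_i/\gen{Z_i}$, a tensor product of Artinian local $\kk$-algebras with residue field $\kk$. I would then invoke multiplicativity of type across such tensor products, which comes from the socle identity $\operatorname{socle}(A \otimes_\kk B) = \operatorname{socle}(A) \otimes_\kk \operatorname{socle}(B)$: expanding an element of $A\otimes_\kk B$ in a $\kk$-basis of $B$ reduces the socle condition to the socle condition on its $A$-coefficients, and symmetrizing gives the reverse containment, whence $\tp(A \otimes_\kk B) = \tp(A)\,\tp(B)$. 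Applying Theorem~\ref{thm. type of unmixed delta trees} to each component gives $\tp(M_i/\gen{Z_i}) = \tp(M_i) = (\text{number of minimal } V_3(T_i)\text{-TD-sets of } T_i)$, so $\tp(M) = \prod_{i=1}^c (\text{number of minimal } V_3(T_i)\text{-TD-sets})$.

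It remains to match this product with the number of minimal $V_3(T)$-TD-sets of the whole forest. Because $V_3(T) = \bigsqcup_i V_3(T_i)$ and there are no edges between components, a set $D$ satisfies $N_T(D) \supseteq V_3(T)$ if and only if $D \cap V(T_i)$ dominates $V_3(T_i)$ in $T_i$ for each $i$; consequently $D$ is a minimal $V_3(T)$-TD-set exactly when each $D \cap V(T_i)$ is a minimal $V_3(T_i)$-TD-set, so the count is multiplicative and (2) follows. I expect the type-multiplicativity step to be the main obstacle: verifying the socle identity for the tensor product (and its compatibility with reduction modulo $Z$) is the one place requiring a genuinely ring-theoretic argument rather than a transcription of the single-tree results. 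An equivalent route would be to push the parametric decomposition of Lemma~\ref{lem. parametric decomp of I} through the sum of the ideals $\mathfrak{I}_i$ in disjoint variable sets, obtaining one parametric component per minimal $V_3(T)$-TD-set; this works equally well but rests on the same principle that the type counts the parametric components.
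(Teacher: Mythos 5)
Your proposal is correct and follows essentially the same route as the paper: decompose $R/\NN_{\text{odd}}(T)$ as a tensor product over $\kk$ of the component rings, invoke the per-component lemmas for the regular sequences and Corollary~\ref{cor. CMness and depth of Delta-forests} for the depth count, establish multiplicativity of type via the socle/$\Ext^0$ tensor identity, and match the product with the count of minimal $V_3$-TD-sets componentwise. Your faithful-flatness iteration for the regularity of $Z$ is a slightly more explicit variant of the paper's "CM of depth $|Z|$ plus dimension drops to zero" argument, but the substance is the same.
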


\begin{proof}
Since $T$ is a forest with connected components $T_1,\dots,T_c$, we have $V_{\text{odd}} = \bigcup_{i = 1}^c V_{\text{odd}}(T_i)$ and $V_{\text{even}} = \bigcup_{i = 1}^c V_{\text{even}}(T_i)$.
So, we get
\begin{align*}
    \NN_{\text{odd}}(T) &= \gen{\Set{X_{N_T(v)}}{v \in V_{\text{odd}}(T)}}\\
    &= \gen{\bigcup_{i = 1}^c \Set{X_{N_{T_i}(v)}}{v \in V_{\text{odd}}(T_i)}}\\
    &= \sum_{i = 1}^c \gen{\Set{X_{N_{T_i}(v)}}{v \in V_{\text{odd}}(T_i)}}\\
    &= \sum_{i = 1}^c \NN_{\text{odd}}(T_i).
\end{align*}
Hence we have
\begin{align*}
    \frac{R}{\NN_{\text{odd}}(T)+\gen{Z}} &= \frac{R}{\sum_{i = 1}^c \NN_{\text{odd}}(T_i)+\sum_{j = 1}^c \gen{Z_j}}\\
    &= \frac{R}{\sum_{i = 1}^c \left(\NN_{\text{odd}}(T_i) + \gen{Z_i} \right)}\\
    &= \bigotimes_{i = 1}^c \frac{R_i}{\NN_{\text{odd}}(T_i) + \gen{Z_i}} 
\end{align*}
where the tensor product is taken over $\kk$.
Since $\dim\left(R_i/(\NN_{\text{odd}}(T_i) + \gen{Z_i})\right) = 0$ for all $i$, we get $\dim(R/(\NN_{\text{odd}}(T) + \gen{Z})) = 0$.
Note that we have
\begin{align*}
    \depth\left(\frac{\kk[V_{\text{even}}(T)]}{\NN_{\text{odd}}(T)}\right) &= \sum_{i = 1}^c\depth\left(\frac{\kk[V_{\text{even}}(T_i)]}{\NN_{\text{odd}}(T_i)}\right) \tag{Corollary~\ref{cor. CMness and depth of Delta-forests}}\\
    &= \sum_{i = 1}^c |Z_i| \tag{definition of $Z_i$}\\
    &= |Z|.
\end{align*}
So, $\kk[V_{\text{even}}(T)]/\NN_{\text{odd}}(T)$ is Cohen-Macaulay (Corollary~\ref{cor. CMness and depth of Delta-forests}) of depth $|Z|$, hence $Z$ is a maximal regular sequence.

Noting that $R \cong \otimes_{i = 1}^c R_i$ (tensoring over $\kk$), the type of $R/\NN_{\text{odd}}(T)$ is given by
\begin{align*}
    \tp\left(\frac{R}{\NN_{\text{odd}}(T)}\right)  &= \tp\left(\bigotimes_{i = 1}^c \frac{R_i}{\NN_{\text{odd}}(T_i)}\right)\\
    &= \dim_\kk\left(\Ext^0_{\otimes_{i = 1}^c R_i}\left(\kk,\bigotimes_{i = 1}^c \frac{R_i}{\NN_{\text{odd}}(T_i)}\right) \right)\\
    &= \dim_\kk\left(\bigotimes_{i = 1}^c \Ext_{R_i}^0\left(\kk,\frac{R_i}{\NN_{\text{odd}}(T_i)}\right) \right)\\
    &= \prod_{i = 1}^c \dim_\kk\left(\Ext_{R_i}^0\left( \kk,\frac{R_i}{\NN_{\text{odd}}(T_i)}\right) \right)\\
    &= \prod_{i = 1}^c\tp(R_i/\NN_{\text{odd}}(T_i)).
\end{align*}
By Theorem~\ref{thm. type of WTD delta trees}, $\tp(R_i/\NN_{\text{odd}}(T_i))$ is given by the number of minimal $V_3(T_i)$-TDSs of $T_i$ for each $i$.
Since $T$ is the union of the connected components $T_1,\dots,T_c$, the number of minimal $V_3$-TDS of $T$ is the product of the number of minimal $V_3(T_i)$-TDSs of $T_i$ for all $i$.
Thus we get
\begin{align*}
\tp\left(\frac{R}{\NN_{\text{odd}}(T)}\right) &= \prod_{i = 1}^c\tp(R_i/\NN_{\text{odd}}(T_i))\\
&= ``\mbox{number of minimal $V_3$-TDSs in }T".
\end{align*}
\end{proof}

\begin{example}\label{ex. type of T_B, T_B = T_R}
    We demonstrate the computations in Theorem~\ref{thm. max. reg. seq. and type of Delta-forest}.
    Consider $T_\B$ from Example~\ref{ex. T, T_B = T_R} shown below in Figure~\ref{fig: T_B from T_B = T_R}.
    \begin{figure}[ht]
        \centering
        \includegraphics{TB_tree_with_all_heights.pdf}
        \caption{$T_\B$ from Example~\ref{ex. T, T_B = T_R}}
        \label{fig: T_B from T_B = T_R}
    \end{figure}
    For each connected component, labeling them as $T_1$, $T_2$, and $T_3$ from left to right, we get
    \begin{align*}
        \frac{R_1}{\NN_{\text{odd}}(T_1)} &= \frac{\kk[v_1,v_2,v_9]}{\gen{v_1v_2v_9}}\\\\
        \frac{R_2}{\NN_{\text{odd}}(T_2)} &= \frac{\kk[v_4,v_5,v_{14},v_{19},v_{20}]}{\gen{v_4v_{19},v_5v_{14}v_{20},v_{19}v_{20}}}\\\\
        \frac{R_3}{\NN_{\text{odd}}(T_3)} &= \frac{\kk[v_{24}]}{\gen{v_{24}}}. 
    \end{align*}
    Also, we have
    $$
    \frac{R}{\NN_{\text{odd}}(T_\B)} = \frac{\kk[v_1,v_2,v_9,v_4,v_5,v_{14},v_{19},v_{20},v_{24}]}{\gen{v_1v_2v_9,v_4v_{19},v_5v_{14}v_{20},v_{19}v_{20}, v_{24}}}.
    $$
    Thus we get
    \begin{align*}
        \frac{R}{\NN_{\text{odd}}(T_\B)} &= \frac{\kk[v_1,v_2,v_9,v_4,v_5,v_{14},v_{19},v_{20},v_{24}]}{\gen{v_1v_2v_9,v_4v_{19},v_5v_{14}v_{20},v_{19}v_{20}, v_{24}}}\\
        &= \frac{\kk[v_1,v_2,v_9]}{\gen{v_1v_2v_9}} \otimes_\kk \frac{\kk[v_4,v_5,v_{14},v_{19},v_{20}]}{\gen{v_4v_{19},v_5v_{14}v_{20},v_{19}v_{20}}} \otimes_\kk \frac{\kk[v_{24}]}{\gen{v_{24}}}\\
        &= \frac{R_1}{\NN_{\text{odd}}(T_1)} \otimes_\kk \frac{R_2}{\NN_{\text{odd}}(T_2)} \otimes_\kk \frac{R_3}{\NN_{\text{odd}}(T_3)}.
    \end{align*}
    The only minimal $V_3$-TDS for $T_1$ and $T_3$ is the empty set since $V_3(T_1) = \emptyset = V_3(T_3)$, and the minimal $V_3$-TDSs for $T_2$ are $\set{v_{19}}$ and $\set{v_{20}}$.
    Thus by Theorem~\ref{thm. type of WTD delta trees}, we get
    $$
    \tp(R_1/\NN_{\text{odd}}(T_1)) = 1\ \ \ \ \ \ \tp(R_2/\NN_{\text{odd}}(T_2)) = 2\ \ \ \ \ \ \tp(R_3/\NN_{\text{odd}}(T_3)) = 1.
    $$
    Therefore, the type of $R/\NN_{\text{odd}}(T_\B)$ is
    $$
    \tp(R/\NN_{\text{odd}}(T_\B)) = \tp(R_1/\NN_{\text{odd}}(T_1)) \cdot \tp(R_2/\NN_{\text{odd}}(T_2)) \cdot \tp(R_3/\NN_{\text{odd}}(T_3)) = 2.
    $$
\end{example}

Here is the main result of this thesis.
It shows how to compute the Cohen-Macaulay type of $R/\NN(T)$ for WTD trees using only graph-theoretic information about $T$.

\begin{theorem}\label{thm. CM type of WTD trees}
    Let $R = \kk[V]$, $R_\B = \kk[V_{\text{even}}(T_\B)]$, and $R_\R = \kk[V_{\text{even}}(T_\R)]$.
    Let $m_\B$ and $m_\R$ be the numbers of minimal $V_3$-TDSs in $T_\B$ and $T_\R$, respectively.
    The Cohen-Macaulay type of $R/\NN(T)$ is
    \begin{align*}
        \tp(R/\NN(T)) &= \tp(R_\B/\NN_{\text{odd}}(T_\B)) \cdot \tp(R_\R/\NN_{\text{odd}}(T_\R))\\
            &= m_\B \cdot m_\R.
    \end{align*}
\end{theorem}

\begin{proof}
    Let $Z_\B$ and $Z_\R$ be the maximal regular sequences for $\kk[V_{\text{even}}(T_\B)]/\NN_{\text{odd}}(T_\B)$ and $\kk[V_{\text{even}}(T_\R)]/\NN_{\text{odd}}(T_\R)$ computed in Theorem~\ref{thm. max. reg. seq. and type of Delta-forest}, respectively, and let $Z = Z_\B \cup Z_\R$.
    Then we have 
   \begin{align*}
        \frac{R}{\NN(T)+\gen{Z}} &= \frac{R}{\NN_{\text{odd}}(T_\B) + \NN_{\text{odd}}(T_\R) + \gen{V_1(T)}+\gen{Z}} \tag{Theorem~\ref{Decomposition of ONI into three ideals}}\\
        &= \frac{\kk[V_{\text{even}}(T_\B) \cup V_{\text{even}}(T_\R) \cup V_1(T)]}{\NN_{\text{odd}}(T_\B) + \NN_{\text{odd}}(T_\R) + \gen{V_1(T)}+\gen{Z}} \tag{Lemma~\ref{lem. vert. partitioning of T}}\\
        &\cong \frac{\kk[V_{\text{even}}(T_\B) \cup V_{\text{even}}(T_\R)]}{\NN_{\text{odd}}(T_\B) + \NN_{\text{odd}}(T_\R)+\gen{Z}}\\
        &= \frac{\kk[V_{\text{even}}(T_\B)]}{\NN_{\text{odd}}(T_\B)+\gen{Z_\B}} \otimes_\kk \frac{\kk[V_{\text{even}}(T_\R)]}{\NN_{\text{odd}}(T_\R) + \gen{Z_\R}}.
    \end{align*}
    Note that we have 
    $$
    \dim\left(\frac{\kk[V_{\text{even}}(T_\B)]}{\NN_{\text{odd}}(T_\B) + \gen{Z_\B}}\right) =0= \dim\left(\frac{\kk[V_{\text{even}}(T_\R)]}{\NN_{\text{odd}}(T_\R) + \gen{Z_\R}}\right)
    $$
    by Theorem~\ref{thm. max. reg. seq. and type of Delta-forest}.
    Hence we get  
    $$
    \dim\left(\frac{R}{\NN(T)+\gen{Z}}\right) = 0
    $$
    so
    $$
    \depth\left(\frac{R}{\NN(T)+\gen{Z}}\right) = 0.
    $$
    
    Let $\mathcal{D}$ be the set of minimal TDSs of $T$, let $\mathcal{D}_\B$ and $\mathcal{D}_\R$ be the sets of minimal odd TDSs of $T_\B$ and $T_\R$, respectively.
    Then \cite[Corollary 2.9]{COURAGE_GT} and \ref{cor. TDS = V_1 U odd TDS} give us that $D \in \mathcal{D}$ if and only if $D = D_\B \cup D_\R \cup V_1(T)$ for some $D_\B \in \mathcal{D}_\B$ and $D_\R \in \mathcal{D}_\R$; note that the sets $D_\B$, $D_\R$, and $V_1(T)$ are pairwise disjoint by Lemma~\ref{lem. vert. partitioning of T} since $D_\B \subseteq V_{\text{even}}(T_\B)$ and $D_\R \subseteq V_{\text{even}}(T_\R)$.
    Fix $D \in \mathcal{D}$, $D_\B \in \mathcal{D}_\B$, and $D_\R \in \mathcal{D}_\R$ such that $D = D_\B \cup D_\R \cup V_1(T)$.
    Then we have
    \begin{align*}
        \dim\left(\frac{R}{\NN(T)} \right) &= |V(T)| - |D|\\
        &= |V_{\text{even}}(T_\B) \cup V_{\text{even}}(T_\R) \cup V_1(T)| - |D| \tag{Lemma~\ref{lem. vert. partitioning of T}}\\
        &= |V_{\text{even}}(T_\B) \cup V_{\text{even}}(T_\R) \cup V_1(T)| - |D_\B \cup D_\R \cup V_1(T)|\\
        &= |V_{\text{even}}(T_\B)| + |V_{\text{even}}(T_\R)| - |D_\B| - |D_\R| \\
        &= \dim\left(\frac{R_\B}{\NN_{\text{odd}}(T_\B)}\right) + \dim\left(\frac{R_\R}{\NN_{\text{odd}}(T_\R)}\right)\\
        &= \depth\left(\frac{R_\B}{\NN_{\text{odd}}(T_\B)} \right) + \depth\left(\frac{R_\R}{\NN_{\text{odd}}(T_\R)} \right) \tag{Corollary~\ref{cor. CMness and depth of Delta-forests}}\\
        &= |Z_\B| + |Z_\R|\\
        &= |Z|.
    \end{align*}
    
    Thus $Z$ is a maximal $R/\NN(T)$-regular sequence.
    Writing $V_\B := V_{\text{even}}(T_\B)$ and $V_\R := V_{\text{even}}(T_\R)$, we get
    \begin{align*}
        \tp\left(R/\NN(T) \right) \hspace{-2.75cm}\\
        &= \tp\left(\frac{R}{\NN(T) + \gen{Z}} \right)\\
        &= \tp\left(\frac{\kk[V_\B]}{\NN_{\text{odd}}(T_\B)+\gen{Z_\B}} \otimes_\kk \frac{\kk[V_\R]}{\NN_{\text{odd}}(T_\R)+\gen{Z_\R}}\right)\\
        &= \dim_\kk\left(\Ext_{\kk[V_\B \cup V_\R]}^0\left(\kk, \frac{\kk[V_\B]}{\NN_{\text{odd}}(T_\B)} \otimes_\kk \frac{\kk[V_\R]}{\NN_{\text{odd}}(T_\R)}\right) \right)\\
        &= \dim_\kk\left(\Ext_{\kk[V_\B]}^0\left(\kk, \frac{\kk[V_\B]}{\NN_{\text{odd}}(T_\B)+\gen{Z_\B}}\right) \otimes_\kk\ \Ext_{ \kk[V_\R]}^0\left(\kk, \frac{\kk[V_\R]}{\NN_{\text{odd}}(T_\R)+\gen{Z_\R}}\right)\right)\\
        &= \dim_\kk\left(\Ext_{\kk[V_\B]}^0\left(\kk, \frac{\kk[V_\B]}{\NN_{\text{odd}}(T_\B)+\gen{Z_\B}}\right)\right) \cdot \dim_\kk\left(\Ext_{ \kk[V_\R]}^0\left(\kk, \frac{\kk[V_\R]}{\NN_{\text{odd}}(T_\R)+\gen{Z_\R}}\right)\right)\\
        &= \tp\left(\frac{\kk[V_\B]}{\NN_{\text{odd}}(T_\B)+\gen{Z_\B}} \right) \cdot \tp\left(\frac{\kk[V_\R]}{\NN_{\text{odd}}(T_\R)+\gen{Z_\R}} \right)\\
        &= \tp(R/\NN_{\text{odd}}(T_\B)) \cdot \tp(R/\NN_{\text{odd}}(T_\R))\\
        &= m_\B \cdot m_\R \tag{Theorem~\ref{thm. max. reg. seq. and type of Delta-forest}}
    \end{align*}
as desired.
\end{proof}

We end this chapter with some examples applying Theorem~\ref{thm. CM type of WTD trees}.

\begin{example}
    Let $T$ be the tree from Example~\ref{ex. decomp. of a tree}.
    Consider its blue and red interior graphs $T_\B$ and $T_\R$ in Figure~\ref{fig. interiors of fence tree}.
    \begin{figure}[ht]
        \centering
        \includegraphics{BI_fence_tree.pdf}
        \qquad
        \qquad
        \qquad
        \includegraphics{RI_fence_tree.pdf}
        \caption{$T_\B$ and $T_\R$ from Example~\ref{ex. decomp. of a tree}}
        \label{fig. interiors of fence tree}
    \end{figure}
    The minimal $V_3$-TDSs of $T_\B$ are $\set{u_3,u_5}$ and $\set{u_4}$.
    The minimal $V_3$-TDSs of $T_\R$ are $\set{u_1}$ and $\set{u_2}$.
    Therefore, we have $\tp(R/\NN(T)) = 2 \cdot 2 = 4$ .
\end{example}

\begin{example}
    Let $T$ be the tree in Example~\ref{ex. T, T_B = T_R} and let $R = \kk[V(T)]$.
    We have 
    $$
    \tp(\kk[V_{\text{even}}(T_\B)]/\NN_{\text{odd}}(T_\B)) = 2
    $$ 
    from Example~\ref{ex. type of T_B, T_B = T_R}.
    Since $T_\B$ and $T_\R$ are isomorphic graphs, we get
    $$
    \kk[V_{\text{even}}(T_\B)]/\NN_{\text{odd}}(T_\B) \cong \kk[V_{\text{even}}(T_\R)]/\NN_{\text{odd}}(T_\R)
    $$
    which gives
    $$
    \tp(\kk[V_{\text{even}}(T_\R)]/\NN_{\text{odd}}(T_\R)) \cong  \tp(\kk[V_{\text{even}}(T_\B)]/\NN_{\text{odd}}(T_\B)) = 2.
    $$
    Thus we get
    \begin{align*}
        \tp(R/\NN(T)) &= \tp(\kk[V_{\text{even}}(T_\B)/\NN_{\text{odd}}(T_\B)]) \cdot \tp(\kk[V_{\text{even}}(T_\R)]/\NN_{\text{odd}}(T_\R))\\ 
        &= 2 \cdot 2 = 4.
    \end{align*}
\end{example}

%\bibliographystyle{plain}
%\bibliography{Lagi_bib}

\printbibliography

\end{document}